\newcommand{\btkz}{\begin{tikzpicture}}
\newcommand{\etkz}{\end{tikzpicture}}
\definecolor{shadecolor}{rgb}{0.8,0.8,0.8}
\newtheorem{theorem}{Theorem}[section]
\newtheorem{lemma}[theorem]{Lemma}
\newtheorem{proposition}[theorem]{Proposition}
\newtheorem{corollary}[theorem]{Corollary}
\newtheorem{definition}[theorem]{Definition}
\newcommand{\specexercise}[1]{}
\newenvironment{proof}{{\flushleft \emph{Proof}:}}{\hfill\ding{110}}
\newenvironment{remark}{{\flushleft \fontfamily{pzc}\bfseries\large Remark:}}{}
\newcommand{\g}{\mathfrak{g}}
\renewcommand{\a}{\mathfrak{a}}
\renewcommand{\b}{\mathfrak{b}}
\renewcommand{\c}{\mathfrak{c}}
\newcommand{\euc}{\mathfrak{e}}
\newcommand{\e}{\varepsilon}
\newcommand{\Volume}{d\text{Vol}_\g}
\newcommand{\R}{{\mathbb{R}}}
\newcommand{\M}{{\mathcal{M}}}
\newcommand{\E}{{\mathcal{E}}}
\newcommand{\Ebond}{{\E_\e^{\text{Bond}}}}
\newcommand{\Evol}{{\E_\e^{\text{Vol}}}}
\newcommand{\Wbond}{{W_\e^{\text{Bond}}}}
\newcommand{\Wvol}{{W_\e^{\text{Vol}}}}
\newcommand{\Wtot}{{W_\e^{\text{Total}}}}
\newcommand{\scrO}{\mathscr{O}}
\newcommand{\fraku}{\mathfrak{u}}
\newcommand{\calX}{{\mathcal{X}}}
\newcommand{\Vol}{\operatorname{Vol}}
\newcommand{\dist}{\operatorname{dist}}
\newcommand{\Hom}{\operatorname{Hom}}
\newcommand{\SO}[1]{\text{SO}(#1)}
\renewcommand{\O}[1]{\text{O}(#1)}
\newcommand{\Wb}{\Phi}
\newcommand{\alPhi}{\alpha_\Phi}
\newcommand{\CPhi}{C_\Phi}
\newcommand{\LPhi}{L_\Phi}
\newcommand{\alPsi}{\alpha_\Psi}
\newcommand{\CPsi}{C_\Psi}
\newcommand{\LPsi}{L_\Psi}
\newcommand{\alW}{\alpha_W}
\newcommand{\CW}{C_W}
\newcommand{\pl}{\partial}
\newcommand{\expNAB}{\exp^\nabla}
\newcommand{\id}{\operatorname{Id}}
\newcommand{\ind}{\mathds{1}}
\newcommand{\Cext}{C_{\text{ext}}}
\newcommand{\textand}{\quad\text{ and }\quad}
\newcommand{\Textand}{\qquad\text{ and }\qquad}
\def\XXint#1#2#3{{\setbox0=\hbox{$#1{#2#3}{\int}$ }
\vcenter{\hbox{$#2#3$ }}\kern-.6\wd0}}
\newcommand{\brk}[1]{\left(#1\right)}          % \brk{.}     => (.)
\newcommand{\Brk}[1]{\left[#1\right]}          % \Brk{.}     => [.]
\newcommand{\BRK}[1]{\left\{#1\right\}}        % \BRK{.}     => {.}     
\newcommand{\Abs}[1]{\left| #1 \right|}        % \Abs{.}     => |.|
\newcommand{\Cases}[1]{\begin{cases} #1 \end{cases}}
\newcommand{\deriv}[2]{\frac{d#1}{d#2}}
\numberwithin{equation}{section}
\begin{document}

\title{Variational Convergence of Discrete Geometrically-Incompatible Elastic Models}
\author{Raz Kupferman\footnote{Institute of Mathematics, The Hebrew University.}\, and Cy Maor\footnote{Department of Mathematics, University of Toronto.}}
\date{}
\maketitle

\begin{abstract}
We derive a continuum model for incompatible elasticity as a variational limit of a family of discrete nearest-neighbor elastic models. The discrete models are based on discretizations of a smooth Riemannian manifold $(\M,\g)$, endowed with a flat, symmetric connection $\nabla$. The metric $\g$ determines local equilibrium distances between neighboring points; the connection $\nabla$ induces a lattice structure shared by all the discrete models. The limit model satisfies a fundamental rigidity property: there are no stress-free configurations, unless $\g$ is flat, i.e., has zero Riemann curvature. Our analysis focuses on two-dimensional systems, however, all our results readily generalize to higher dimensions.
\end{abstract}

\tableofcontents

\section{Introduction}
Incompatible, or non-Euclidean elasticity is a model of pre-stressed materials, in which
the elastic body is modeled as an oriented Riemannian manifold with boundary $(\M^d,\g)$ (usually $d=2,3$). The metric $\g$, often called a \emph{reference metric}, represents local equilibrium distances between neighboring material elements.
The elastic energy of a configuration $f:\M\to \R^d$ is a measure of distortion; it quantifies how far $f$ is from being an isometry. If we denote the (non-negative) elastic energy density by $W$, then
\begin{equation}
\label{eq:energy_density_basic_assumption}
W(df_p) = 0 
\qquad\text{if and only if}\qquad
df_p\in \SO{\g_p,\euc},
\end{equation}
where $p\in\M$, $\euc$ is the Euclidean metric and $\SO{\g_p,\euc}$ is the set of orientation-preserving isometries $T_p\M\to\R^d$.
If the curvature tensor of $\g$ is not identically zero, then there are no isometric immersions $\M\to \R^d$. Thus, the elastic energy associated with every configuration is positive even in the absence of external forces or boundary conditions. In other words, there are \emph{no stress-free configurations} (and under some mild coercivity conditions on $W$, the infimum energy is also positive \cite{LP10,KMS17}). 
This is in contrast to classical elasticity, where the existence of a stress-free configuration is assumed explicitly---the \emph{reference configuration}---which amounts to assuming that $\g$ is flat.

The study of non-Euclidean elasticity was initiated in the 1950s \cite{Nye53,Kon55,BBS55}, followed later by \cite{Wan67,Kro81} and others. In these works, incompatibility is due to material defects, such as disclinations, dislocations and point-defects.
In recent years, the scope of incompatible elasticity has been extended significantly, encompassing differential growth \cite{GB05,Yav10}, humidity-driven expansion and shrinkage \cite{AESK11}, thermal expansion \cite{OY10}, responsive gels \cite{KES07} and more.

A central theme in continuum mechanics is the derivation of continuum models from discrete particle models. 
A prototypical example is the modeling of an elastic medium by a collection of point masses interconnected by (possibly nonlinear) springs. 
One then analyzes the convergence of the discrete model as the spacing between adjacent particles tends to zero. 
There is a wealth of work addressing the discrete-to-continuum limit, both in the context of crystalline structures and disordered, or amorphous media (see e.g., references in \cite[p.~84]{Bra02}; for a more recent treatment see e.g., \cite{LR13} and the references therein).
However, to the best of our knowledge, there are no such results in the context of non-Euclidean elasticity, in which the limiting model satisfies Property \eqref{eq:energy_density_basic_assumption}.
The goal of this paper is to derive a continuum model for non-Euclidean elasticity as a limit of a certain class of discrete models, and to provide a framework for studying the limit of other classes of discrete models. 

The derivation of a continuum model as a limit of discrete models requires a well-defined limiting process, that is, a family of discrete models parametrized by decreasing inter-particle separations.
In a Euclidean body, this construction is natural---a global lattice structure can be defined by  Cartesian coordinates, yielding a family of lattices (e.g., hexagonal or cubic)  of varying scale.
In a manifold endowed with a non-Euclidean metric, there are no canonical choices of lattices;
supplementary geometric information must be prescribed in order to specify a lattice structure. 

A natural construct, which the body manifold can be endowed with, and through which a parametrized family of lattices can be defined is a flat affine connection $\nabla$. 
In general, an affine connection defines parallel transport of tangent vectors along paths. If the connection is flat (and the manifold is simply-connected), parallel transport is path-independent, and in particular, there exist 
parallel frame fields. In the context of this paper, parallel frame fields represent the underlying lattice directions, or equivalently, they define how lattice directions transform between different points in the body.
In materials science, an affine connection models distributions of defects; the defect density is represented by the tensorial fields---\emph{curvature}, \emph{torsion} and \emph{non-metricity}---that define the connection.
For a more detailed discussion on defects and affine connections see the classical papers \cite{Wan67,Kro81}, or more recently \cite{YG12,KM15,KMR16}.

\paragraph{Description of main results}
In this paper, we derive a continuum model for a residually-stressed elastic body as a variational limit of discrete models;
the limit functional satisfies Property \eqref{eq:energy_density_basic_assumption}, hence the limit model does not admit a reference configuration unless the reference metric is flat.
For the sake of clarity, we present results in a two-dimensional setting; higher dimensional generalizations are discussed at the end of the paper.

The discrete model is 
based on a flat and symmetric connection $\nabla$ and an underlying hexagonal lattice.
The lattice is defined by three crystallographic axes $\{\a,\b,\c\}$, which are $\nabla$-parallel vector fields spanning $T\M$, satisfying $\a+\b+\c=0$. For every $\e>0$, $\M$ is triangulated such that the distance between every two adjacent vertices is of order $\e$ and the $\nabla$-geodesic connecting them is parallel to a crystallographic axis. As we show, the flatness and the symmetry of $\nabla$ make such a triangulation possible.

Given $\e>0$, denote by $V_\e$ the vertices of the triangulation. A discrete configuration is a mapping $f_\e:V_\e\to\R^2$. To every discrete configuration corresponds an energy comprising
 two nearest-neighbor contributions: one contribution is a pairwise bond term, penalizing incompatible distances between two adjacent vertices; the other contribution is a three-point volume term, penalizing incompatibility in the signed volume of triangles.
The volume term is natural both from physical and analytic perspectives, as it penalizes for orientation-reversal, or ``folding".
From a physical perspective, the volume term implies an energy cost for material interpenetration; 
from an analytic perspective, a nearest-neighbor model indifferent to folding cannot yield Property \eqref{eq:energy_density_basic_assumption} in the $\e\to0$ limit.

Our main results are (i) the family of discrete energies $\Gamma$-converges, as $\e\to0$, in a strong $L^2$ topology (Theorem~\ref{thm:Gamma_convergence}); to this end we embed discrete configurations into $L^2(\M;\R^2)$ via a standard affine extension.
(ii) The limit functional satisfies Property \eqref{eq:energy_density_basic_assumption} and frame-invariance (left $\text{SO}$-symmetry) (Proposition~\ref{pn:properties_limit_functional}); an additional discrete right symmetry and a material homogeneity property are obtained under additional assumptions (Section~\ref{sec:conformal}).

\paragraph{Comparison with previous results}
To the best of our knowledge, the only existing work on a discrete-to-continuum limit in non-Euclidean elasticity is the work of Lewicka and Ochoa \cite{LO15}.
In \cite{LO15}, a family of discretizations is constructed by designating a distinguished coordinate chart and constructing cubic lattices in these coordinates.
This construction is equivalent to a choice of a flat, symmetric connection $\nabla$, so while differing in terminology, the construction in \cite{LO15} is similar to ours.

The limiting model obtained in \cite{LO15} has one major drawback: Property \eqref{eq:energy_density_basic_assumption} is not satisfied; this follows immediately from \cite[Lemma~4.1]{LO15}, and manifests in \cite[Example~6.3]{LO15}. This drawback is due to the fact that  the energy only accounts for pairwise bond energies in a cubic lattice. 
A nearest-neighbor model in a cubic lattice does not penalize shear and as a result, neither does the limiting continuum model.  
Moreover, pairwise bond energy does not penalize for orientation-reversal, or folding; as a result, the convexification occurring in the limit process yields a limit energy indifferent to contractions.

These problems may disappear if one considers non-nearest-neighbor interactions; however, these cases are much harder to analyze. Non-nearest-neighbor interactions were considered in \cite{LO15}. They obtained bounds for the limit energies ($\Gamma-\liminf$ and $\Gamma-\limsup$ inequalities); the lower bounds, however,  do not satisfy Property \eqref{eq:energy_density_basic_assumption}.

\paragraph{Physical interpretation of the model and future directions}
In this paper, we derive the limit energy of an incompatible elastic body endowed with a flat, symmetric connection. 
Such a model may be relevant to several physical settings. One such setting is that of a body undergoing differential expansion (or shrinkage); initially the metric is Euclidean and the body is endowed with a lattice structure conforming with that metric. After expansion, the body acquires a new reference metric---a non-Euclidean one---while retaining the original lattice structure.
This setting is a lattice-equivalent of the settings considered in various recent experimental and theoretical works \cite{ESK09a,OY10}.
Another type of systems modeled by flat symmetric connections consists of bodies containing distributed point-defects; see \cite{KMR16}.

Mathematically, flat symmetric connections are the easiest to handle: their specification is equivalent to choosing a distinguished coordinate chart $x=(x^i)$ on $\M$, and declaring the frame fields $\pl_{x^i}$ to be parallel.
Our choice of working with a connection rather than with a distinguished coordinate chart emphasizes the geometric and the physical meaning of the underlying lattice structure; in particular, some of the properties of the limit energy, e.g., material homogeneity (existence of a material connection), are more naturally formulated and derived in this framework.

All the above-mentioned models assume an underlying lattice structure; in particular, the limit models are not isotropic.
Another important future direction is obtaining the isotropic elastic energy of an amorphous pre-stressed elastic body.
A possible approach is using a random discretization of the manifold, similar to the work of \cite{ACG11} in the Euclidean case.

\paragraph{Structure of the paper}
In Section~\ref{sec:prelim} we derive key properties of flat symmetric connections and define the affine extension of functions defined on the vertices of geodesic triangles.
In Section~\ref{sec:model} we define the discrete elastic models.
In Section~\ref{sec:Gamma} we prove the variational convergence of the discrete elastic models.
In Section~\ref{sec:prop_of_limit} we prove key properties of the limit functional, and notably Property~\eqref{eq:energy_density_basic_assumption}. 
In Section~\ref{sec:dicussion} we discuss various possible extensions of this work, including higher dimensions, different lattice structures
and models forbidding interpenetration.

%%%%%%%%%%%%%%%%%%%%%%%%%%%%%%%%%%%%%%%%%%%%%
\section{Flat symmetric connections and affine extensions}
\label{sec:prelim}

Throughout this paper, $\M$ is assumed to be a smooth, simply-connected manifold, possibly with a smooth boundary.
Let $\nabla$ be a flat connection on $\M$. As is well-known,  the parallel transport induced by a flat connection on a simply-connected manifold is path-independent.
We denote the parallel transport operator by
\[
\Pi_p^q : T_p\M\to T_q\M.
\]
For $p\in\M$, we denote by $\expNAB_p:\scrO_p\subset T_p\M\to\M$ the exponential map at $p$, where $\scrO_p$ is a convex neighborhood of the origin in $T_p\M$, such that $\expNAB_p$ is a smooth embedding.

We start this section by establishing a number of properties pertinent to flat, symmetric connections:

%%%%%%%%%%
\begin{lemma}
\label{lemma:additive_exp}
Let $\M$ be a simply-connected $d$-dimensional manifold, $d\ge2$, and let $\nabla$ be a flat, symmetric connection on $\M$. Let $p\in\M$ and let $v,w\in \scrO_p$ be independent vectors, such that $v+w\in\scrO_p$.
Then,
\[
\expNAB_p(v+w) = \expNAB_{\expNAB_pv}(\Pi_p^{\expNAB_pv}w).
\]  
\end{lemma}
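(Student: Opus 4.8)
The plan is to exploit the fact that a flat, symmetric connection is, locally, the canonical affine connection of $\R^d$ in disguise: in suitable ``affine'' coordinates the exponential map becomes an honest affine map and parallel transport becomes the identity on components, so the claimed identity reduces to an immediate coordinate computation.

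First I would construct such coordinates. Since $\nabla$ is flat and $\M$ is simply-connected, parallel transport is path-independent, so fixing a basis of $T_p\M$ and extending it by parallel transport produces a globally defined parallel frame $E_1,\dots,E_d$. Because $\nabla$ is symmetric, $[E_i,E_j]=\nabla_{E_i}E_j-\nabla_{E_j}E_i=0$, so the $E_i$ pairwise commute; by the simultaneous flow-box theorem every point of $\M$ has a coordinate neighborhood in which $\partial_{x^i}=E_i$. In any such chart the Christoffel symbols vanish identically ($\nabla_{\partial_i}\partial_j=\nabla_{E_i}E_j=0$), so the geodesic equation reduces to $\ddot x^k=0$ and $\nabla$-geodesics are affine curves in these coordinates. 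Identifying each $T_q\M$ with $\R^d$ through the frame $(E_i)$ and writing $[u]\in\R^d$ for the components of $u$, this yields the two formulas I will use: $x\big(\expNAB_q u\big)=x(q)+[u]$ (for $q,u$ in the chart's range) and $[\Pi_p^q u]=[u]$ (parallel transport acts trivially on components, since $E_i$ is parallel).

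Next I would reduce to working inside affine charts and then compare coordinates. By convexity of $\scrO_p$ the segment $\{v+tw:t\in[0,1]\}$ lies in $\scrO_p$, so $c(t):=\expNAB_p(v+tw)$ is a well-defined curve on $[0,1]$ with compact image in $\expNAB_p(\scrO_p)$; I would cover this image by finitely many affine charts as above. In each such chart $c$ has coordinates $[v]+t[w]$, an affine function of $t$; hence $c$ is a $\nabla$-geodesic with $c(0)=\expNAB_p v$ and velocity $\dot c(0)$ equal to the vector with components $[w]$ based at $\expNAB_p v$, i.e.\ $\dot c(0)=\Pi_p^{\expNAB_p v}w$. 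Uniqueness of geodesics with prescribed initial data then gives $c(t)=\expNAB_{\expNAB_p v}\!\big(t\,\Pi_p^{\expNAB_p v}w\big)$, and $t=1$ is the assertion. (Equivalently, if all the relevant points fit into a single affine chart, one may skip the uniqueness step and simply note that the left side has coordinates $x(p)+[v+w]=x(p)+[v]+[w]$ while the right side has coordinates $x(\expNAB_p v)+[\Pi_p^{\expNAB_p v}w]=(x(p)+[v])+[w]$.)

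I expect the only real friction to be the domain bookkeeping: ensuring that every exponential and every parallel transport written above is evaluated within $\scrO_p$, its images, and a common affine chart, so that the coordinate formulas apply — this is implicitly built into the hypotheses that $v,w,v+w\in\scrO_p$ and that the right-hand side is meaningful. By contrast, the standing assumption that $v$ and $w$ are independent is not needed for the argument (in the collinear case the identity degrades to the elementary statement that an affine reparametrization of a geodesic is again a geodesic), and the restriction $d\ge2$ plays no role beyond that.
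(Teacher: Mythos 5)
Your local ingredients are all correct (the parallel frame, the vanishing Christoffel symbols in a simultaneous flow-box chart, geodesics being affine there, parallel transport acting trivially on frame components), and in the case where a single affine chart contains $p$, the curve and the relevant geodesics, your computation is a coordinate version of the paper's argument. The gap is in the reduction to that case. Your chart formula $x\brk{\expNAB_q u}=x(q)+[u]$ is justified only when $q$ lies in the chart and the whole geodesic $s\mapsto\expNAB_q(su)$, $s\in[0,1]$, stays inside it. For a chart of your finite cover meeting the curve $c(t)=\expNAB_p(v+tw)$ away from $p$, the base point $p$ and the radial geodesics $s\mapsto\expNAB_p(s(v+tw))$ joining $p$ to that portion of $c$ need not lie in the chart, so the assertion ``in each such chart $c$ has coordinates $[v]+t[w]$'' is not a consequence of anything you have established: saying that the coordinate velocity of $c$ is constantly $[w]$ is the same as saying $\dot c(t)=\Pi_p^{c(t)}w$, which is essentially the content of the lemma (and of Lemma~\ref{lemma:dexp}, which the paper \emph{deduces} from this lemma, so invoking it would be circular). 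Nor can you assume a single chart suffices: $v,w\in\scrO_p$ are not small, flow-box charts have limited size, and a flat symmetric connection on a simply-connected manifold need not admit a global affine chart (the developing map can fail to be injective), so ``domain bookkeeping'' is not merely implicit in the hypotheses.

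What is missing is a propagation argument showing that the frame components of $\dot c$ remain $[w]$ far from $p$. The paper supplies exactly this, chart-free: extend $v,w$ to the parallel fields $X_q=\Pi_p^q v$, $Y_q=\Pi_p^q w$, observe $[X,Y]=0$ from flatness and symmetry, and combine the flow identity $\phi^1_{X+Y}=\phi^1_Y\circ\phi^1_X$ with the relation $\phi^t_X(p)=\expNAB_p(tX_p)$ between flows of parallel fields and the exponential map. Alternatively one can compute the variation field $\partial_t\expNAB_p(v+tw)$ along the radial geodesics (using zero curvature and zero torsion) to get $\dot c(t)=\Pi_p^{c(t)}w$ directly, or run a continuation argument with the developing map. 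With any such step inserted, your uniqueness-of-geodesics conclusion goes through; your side remarks that independence of $v,w$ and the restriction $d\ge2$ are not needed for this particular identity are correct but peripheral.
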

%%%%%%%%%

%%%%%%%
\begin{proof}
Define the parallel vector fields $X,Y\in\Gamma(T\M)$ given by
\[
X_q = \Pi_p^q v
\Textand
Y_q = \Pi_p^q w.
\]
Since $X$ and $Y$ are parallel and since the connection is symmetric, it follows that $[X,Y]=0$.
It is well-known that the flows induced by commuting flow fields satisfy the additive relation $\phi_{X+Y}^t  = \phi_Y^t\circ \phi_X^t$ (follows from \cite[Prop.~18.5]{Lee06}).
Finally, the flow induced by a parallel frame field is related to the exponential map via 
\[
\phi_X^t(p) =  \expNAB_p(t X_p).
\]
Thus,
\[
\phi_{X+Y}^1(p) = \expNAB_p(X_p + Y_p) = \expNAB_p(v + w),
\]
and
\[
\phi_Y^1\circ \phi_X^1(p) = \phi_Y^1\brk{\expNAB_p v} = \expNAB_{\expNAB_p v}(Y_{\expNAB_p v}) = 
 \expNAB_{\expNAB_p v}(\Pi_p^{\expNAB_p v}w).
\] 
\end{proof}
%%%%%%

The following corollary results immediately from Lemma~\ref{lemma:additive_exp}, applied to the segment $t \mapsto v + tw$.
%%%%%%%%%%
\begin{corollary}
\label{cor:lines_are_geodesics}
For $p\in\M$,
the map $\expNAB_p$ maps straight segments in $\scrO_p$ into $\nabla$-geodesics (note that for general connections, this is only true for lines through the origin). 
\end{corollary}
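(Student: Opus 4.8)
The plan is to deduce Corollary~\ref{cor:lines_are_geodesics} directly from Lemma~\ref{lemma:additive_exp} by differentiating along a straight segment, as the sentence preceding the statement suggests. Fix $p\in\M$ and let $\ell(s) = v + sw$ be a straight segment contained in $\scrO_p$, where without loss of generality $v$ and $w$ are independent (if $w$ is a multiple of $v$, the segment lies on a line through the origin and the conclusion is the classical fact about exponential maps). I want to show that the curve $\ga(s) := \expNAB_p(\ell(s)) = \expNAB_p(v+sw)$ is a $\nabla$-geodesic.

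First I would apply Lemma~\ref{lemma:additive_exp} with the decomposition $v + sw = v + (sw)$: for $s$ small enough that $sw\in\scrO_p$ (and using that $\scrO_p$ is convex so the whole segment is admissible), we get
\[
\ga(s) = \expNAB_p(v+sw) = \expNAB_{\expNAB_p v}\brk{\Pi_p^{\expNAB_p v}(sw)} = \expNAB_q\brk{s\,\Pi_p^{q} w},
\]
where $q := \expNAB_p v = \ga(0)$. Thus $\ga$ is, near $s=0$, exactly the curve $s\mapsto \expNAB_q(s\,u)$ with $u := \Pi_p^q w \in T_q\M$. But $s\mapsto \expNAB_q(su)$ is by definition the $\nabla$-geodesic through $q$ with initial velocity $u$ — this is precisely the "line through the origin" case of the exponential map, which holds for every affine connection. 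Hence $\ga$ coincides with a $\nabla$-geodesic on a neighborhood of $s=0$. Since $s=0$ corresponds to an arbitrary point $\ell(0)=v$ of the segment, the same argument applied at any interior parameter shows $\ga$ satisfies the geodesic equation $\nabla_{\dga}\dga = 0$ everywhere along the segment; by uniqueness of geodesics it is a reparametrization-free $\nabla$-geodesic on its whole domain. Finally, $\expNAB_p$ is a smooth embedding on $\scrO_p$, so the image of the straight segment $\ell$ is exactly the geodesic arc $\ga$, which is what the corollary asserts.

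The only subtlety — and the closest thing to an obstacle — is a domain/admissibility bookkeeping issue: Lemma~\ref{lemma:additive_exp} requires $v$, $sw$, and $v+sw$ all to lie in $\scrO_p$ and $v,sw$ to be independent. Convexity of $\scrO_p$ handles $v+sw$; independence holds for $s\neq 0$ in the generic case and the degenerate case is dispatched separately as noted; and for $s$ in a small interval around any base parameter $sw$ itself lies in $\scrO_p$, which is all that is needed to run the local argument and then patch. So the proof is genuinely immediate from the lemma, and I would present it in just a few lines, emphasizing the identification $\ga(s) = \expNAB_q(s\,\Pi_p^q w)$ as the crux.
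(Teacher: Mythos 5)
Your proof is correct and is essentially the paper's own argument: the paper simply states that the corollary follows from Lemma~\ref{lemma:additive_exp} applied to the segment $t\mapsto v+tw$, and your identification $\expNAB_p(v+sw)=\expNAB_q\brk{s\,\Pi_p^q w}$ with $q=\expNAB_p v$ is exactly that application, with the domain, independence, and rebasing details spelled out. Nothing further is needed.
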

%%%%%%%%%

%%%%%%%%
%\begin{proof}
%Consider the straight segment
% $\gamma:I\to \scrO_p$ given by $\gamma: t\mapsto v + t w$, where $v,w\in T_p\M$. Then, $\expNAB_p\circ\gamma: I\to \M$ is given by
%\[
%(\expNAB_p\circ\gamma)(t) =  \expNAB_p(v + tw) = \expNAB_{\expNAB_p v} (t\, \Pi_p^{\expNAB_pv} w),
%\]
%where we used again Lemma~\ref{lemma:additive_exp}.
%By the definition of the exponential map, the right-hand side 
%is a $\nabla$-geodesic starting at $\expNAB_p v$.
%
%\end{proof}
%%%%%%%

%%%%%%%%%%
\begin{lemma}
\label{lemma:dexp}
Let the setting by the same as in Lemma~\ref{lemma:additive_exp}. Then,
\[
(d\expNAB_p)_v = \Pi_p^{\expNAB_p v}.
\]
\end{lemma}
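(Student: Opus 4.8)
The plan is to use Lemma~\ref{lemma:additive_exp} to compute the differential $(d\expNAB_p)_v$ directly from its definition as a directional derivative, thereby reducing the identity to an infinitesimal version of the additive relation. Fix $v\in\scrO_p$ and let $w\in T_p\M$ be the direction in which we differentiate. Since $\scrO_p$ is open, for $|t|$ small we have $v+tw\in\scrO_p$, so Lemma~\ref{lemma:additive_exp} (or, more precisely, Corollary~\ref{cor:lines_are_geodesics}, which handles the case when $v$ and $w$ need not be independent — one must be slightly careful here about the independence hypothesis in the lemma, and I would either invoke the corollary or note that independence can be arranged by a limiting/perturbation argument since both sides are continuous in $w$) gives
\[
\expNAB_p(v+tw) = \expNAB_{\expNAB_p v}\brk{t\,\Pi_p^{\expNAB_p v}w}.
\]
Now differentiate both sides with respect to $t$ at $t=0$. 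The left-hand side yields $(d\expNAB_p)_v(w)$ by definition. For the right-hand side, set $q=\expNAB_p v$ and $u=\Pi_p^q w\in T_q\M$; then the curve $t\mapsto \expNAB_q(tu)$ is by definition the $\nabla$-geodesic through $q$ with initial velocity $u$, so its derivative at $t=0$ is $u=\Pi_p^{\expNAB_p v}w$. Equating the two sides gives $(d\expNAB_p)_v(w) = \Pi_p^{\expNAB_p v}w$ for all $w$, which is the claim.

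The only subtle point — and the place I would be most careful — is the independence hypothesis in Lemma~\ref{lemma:additive_exp}: it is stated for independent $v,w$, whereas here $w$ ranges over all of $T_p\M$, including multiples of $v$. When $w$ is a multiple of $v$ the identity $\expNAB_p(v+tw)=\expNAB_p((1+t)w)$ is just the statement that $s\mapsto \expNAB_p(sw)$ is the geodesic through the origin direction, which is true for any connection and gives the same conclusion by the standard property of the exponential map (its differential at $0$ along radial directions, composed with the geodesic reparametrization). For $w$ not a multiple of $v$ but with $v$ possibly zero, or other degenerate configurations, one can invoke continuity: both $w\mapsto(d\expNAB_p)_v(w)$ and $w\mapsto\Pi_p^{\expNAB_p v}w$ are linear (hence continuous) in $w$, and they agree on the open dense set of $w$ independent from $v$, so they agree everywhere. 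Thus the restriction to independent vectors in Lemma~\ref{lemma:additive_exp} is harmless, and the differential identity holds on all of $T_p\M$.

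I do not anticipate any genuine obstacle beyond this bookkeeping: the heart of the argument is a one-line differentiation of the functional equation from Lemma~\ref{lemma:additive_exp}, using only that $t\mapsto\expNAB_q(tu)$ is the geodesic with velocity $u$. An alternative, essentially equivalent route would be to differentiate the flow identity $\phi_{X+Y}^1 = \phi_Y^1\circ\phi_X^1$ used in the proof of Lemma~\ref{lemma:additive_exp} and read off the Jacobian of $\expNAB_p$ from the chain rule, but the direct directional-derivative computation above is cleaner and self-contained.
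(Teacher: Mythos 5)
Your proof is correct and follows essentially the same route as the paper: apply Lemma~\ref{lemma:additive_exp} to write $\expNAB_p(v+tw)=\expNAB_{\expNAB_p v}\brk{t\,\Pi_p^{\expNAB_p v}w}$ and differentiate at $t=0$, using that the differential of $\expNAB_q$ at the origin is the identity (equivalently, that $t\mapsto\expNAB_q(tu)$ has initial velocity $u$). Your extra remark about the independence hypothesis in Lemma~\ref{lemma:additive_exp}, handled by the radial-geodesic case and linearity/continuity in $w$, is a legitimate small point the paper passes over silently, and your treatment of it is fine.
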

%%%%%%%%%

%%%%%%%
\begin{proof}
By definition, for $v,w\in T_p\M$,
\[
\begin{split}
(d\expNAB_p)_v(w) &= \left.\deriv{}{t}\right|_{t=0} \expNAB_p(v + tw) 
= \left.\deriv{}{t}\right|_{t=0} \expNAB_{\expNAB_p v}(t \, \Pi_p^{\expNAB_pv} w) \\
&= \brk{d\expNAB_{\expNAB_p v}}_0\brk{\Pi_p^{\expNAB_p v} w} 
= \Pi_p^{\expNAB_pv} w.
\end{split}
\]
The first and the third equalities follow from the definition of the differential.
In the second equality we used Lemma~\ref{lemma:additive_exp}; in the last one we used the fact that the differential of the exponential map at the origin is the identity map.
\end{proof}
%%%%%%

From now on we focus on two-dimensional manifolds endowed with a flat, symmetric connection $\nabla$.

%%%%%%%%
\begin{corollary}
\label{cor:2.4}
Let $p\in\M$ and 
\[
q = \expNAB_p v
\Textand
r = \expNAB_p w,
\]
for $v,w\in T_p\M$.
Then, 
\[
r = \expNAB_q(\Pi_p^q(w-v)).
\]
Moreover, the interior of the $\nabla$-geodesic triangle whose vertices are $p$, $q$ and $r$ is given by
\[
\mathcal{T}_\nabla(p,q,r) = \{\expNAB_p(sv  + tw) ~:~ 0< s,t, s+t < 1\}.
\]
In other words, $\mathcal{T}_\nabla(p,q,r)$ is the image under $\expNAB_p$ of the triangle
\[
\mathcal{T}_p(0,v,w) \subset T_p\M,
\]
whose vertices are $0$, $v$ and $w$.
\end{corollary}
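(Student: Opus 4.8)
The plan is to prove the two assertions in turn: the identity $r=\expNAB_q(\Pi_p^q(w-v))$ follows from Lemma~\ref{lemma:additive_exp} by a one-line manipulation, and the description of the triangle follows from Corollary~\ref{cor:lines_are_geodesics} together with the fact that $\expNAB_p$ is a smooth embedding on $\scrO_p$. Throughout I assume $v,w$ are linearly independent, so that $p,q,r$ are genuinely the vertices of a nondegenerate geodesic triangle; the degenerate case is vacuous.

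For the first identity I would write $w=v+(w-v)$. The pair $(v,\,w-v)$ consists of independent vectors, and both $v$ and $v+(w-v)=w$ lie in $\scrO_p$ (the latter because $r=\expNAB_p w$ is assumed defined and $\scrO_p$ is convex), so Lemma~\ref{lemma:additive_exp} applies and yields
\[
r=\expNAB_p\brk{v+(w-v)}=\expNAB_{\expNAB_p v}\brk{\Pi_p^{\expNAB_p v}(w-v)}=\expNAB_q\brk{\Pi_p^q(w-v)},
\]
which is exactly the claim.

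For the triangle description, recall that $\expNAB_p$ restricted to the convex set $\scrO_p$ is a diffeomorphism onto the open set $U:=\expNAB_p(\scrO_p)\subset\M$, hence in particular a homeomorphism of the compact convex set $\overline{\mathcal{T}_p(0,v,w)}\subset\scrO_p$ onto its image. The three edges of $\mathcal{T}_p(0,v,w)$ are the straight segments $[0,v]$, $[0,w]$ and $[v,w]$; by Corollary~\ref{cor:lines_are_geodesics} (for the edge $[v,w]$, apply it to the parametrization $t\mapsto v+t(w-v)$, $t\in[0,1]$) their images under $\expNAB_p$ are precisely the $\nabla$-geodesic segments joining $p$ to $q$, $p$ to $r$, and $q$ to $r$. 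Thus $\expNAB_p$ maps the closed coordinate triangle homeomorphically onto the compact region bounded by the three geodesic edges, and therefore maps the manifold-interior of the former — namely the open triangle $\mathcal{T}_p(0,v,w)$ — onto the manifold-interior of the latter, which is by definition $\mathcal{T}_\nabla(p,q,r)$. Since a point of the open triangle with vertices $0,v,w$ is a convex combination $s\,v+t\,w$ with $s,t>0$ and $s+t<1$, this gives $\mathcal{T}_\nabla(p,q,r)=\{\expNAB_p(sv+tw):0<s,t,\ s+t<1\}$, as claimed.

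The only step requiring care is the passage from "homeomorphism of the closed triangles" to "the open triangle maps onto the interior": this is invariance of the manifold boundary under homeomorphisms of two-dimensional manifolds with boundary, and it needs no input specific to the connection beyond Corollary~\ref{cor:lines_are_geodesics}. Everything else is the bookkeeping of convex combinations and the admissibility check for Lemma~\ref{lemma:additive_exp}.
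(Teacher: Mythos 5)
Your proposal is correct and takes essentially the same route as the paper: the identity $r=\expNAB_q(\Pi_p^q(w-v))$ via Lemma~\ref{lemma:additive_exp} applied to the decomposition $w=v+(w-v)$, and the triangle description via Corollary~\ref{cor:lines_are_geodesics}. Your additional topological bookkeeping (the homeomorphism of closed triangles and invariance of the boundary) simply fleshes out what the paper dismisses as an immediate consequence, so no further comment is needed.
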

%%%%%%%

%%%%%%%
\begin{proof}
If follows from Lemma~\ref{lemma:additive_exp} that
\[
\begin{split}
\expNAB_q(\Pi_p^q(w-v)) &= \expNAB_{\expNAB_p v}(\Pi_p^{\expNAB_p v}(w-v)) 
= \expNAB_{p}(v + (w-v)) = r.
\end{split}
\]
The fact that the triangle $pqr$ is the image of $\mathcal{T}_p(0,v,w)$ is an immediate consequence of the previous corollary, whereby straight lines in $T_p\M$ are mapped into $\nabla$-geodesics; in particular, the curve $t\mapsto\expNAB_p((1-t)v+tw)$, $t\in[0,1]$ is the geodesic between $q$ and $r$.
\end{proof}
%%%%%%

Let $p$, $q$ and $r$ be defined as in Corollary~\ref{cor:2.4}.
Suppose that the values of a real-valued function $f$ are prescribed at the points $p,q,r$. We define the extension $F$ of $f$ in the geodesic triangle $\mathcal{T}_\nabla(p,q,r)$ as follows:
\[
F(\expNAB_p(s v + t w)) = f(p) + s\, (f(q) - f(p)) + t\,(f(r) - f(p)),
\qquad 0\le s,t,s+t \le 1.
\]
It immediately follows from the definition that the function $F\circ\expNAB_p$ is a real-valued affine function on $\mathcal{T}_p(0,v,w)$:
%%%%%%%%
\begin{proposition}
\label{prop:dF1}
Let $f$ and $F$ be defined as above.
The differential of $F$ satisfies
\[
d(F\circ\expNAB_p)_{sv + tw}(v) = f(q) - f(p)
\Textand
d(F\circ\expNAB_p)_{sv + tw}(w) = f(r) - f(p).
\]
\end{proposition}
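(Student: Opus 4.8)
The plan is to observe that $F\circ\expNAB_p$ is, by construction, precisely the affine function on $\mathcal{T}_p(0,v,w)\subset T_p\M$ that takes the values $f(p)$ at $0$, $f(q)$ at $v$ and $f(r)$ at $w$. Indeed, writing $G := F\circ\expNAB_p$, the defining formula reads $G(sv+tw) = f(p) + s\,(f(q)-f(p)) + t\,(f(r)-f(p))$ for $0\le s,t,s+t\le 1$. Since $v,w$ are independent, they form a basis of $T_p\M$, and $G$ is manifestly an affine map in the coordinates $(s,t)$; hence its differential is constant (independent of the base point $sv+tw$) and linear in the increment.

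The key computation is then immediate from the definition of the differential: for any admissible base point $sv+tw$ and any increment vector $\xi\in T_p\M$,
\[
dG_{sv+tw}(\xi) = \left.\deriv{}{\tau}\right|_{\tau=0} G\brk{(s+\tau\sigma)v + (t+\tau\rho)w},
\]
where $\xi = \sigma v + \rho w$. Plugging in the explicit formula for $G$ and differentiating, all the constant terms drop and one is left with $dG_{sv+tw}(\xi) = \sigma\,(f(q)-f(p)) + \rho\,(f(r)-f(p))$. Taking $\xi = v$ (so $\sigma=1$, $\rho=0$) gives $dG_{sv+tw}(v) = f(q)-f(p)$, and taking $\xi = w$ (so $\sigma=0$, $\rho=1$) gives $dG_{sv+tw}(w) = f(r)-f(p)$, which is exactly the assertion.

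There is essentially no obstacle here: the statement is really just the observation that $F$ was \emph{defined} by pulling back an affine function along $\expNAB_p$, together with Corollary~\ref{cor:2.4}, which guarantees that $\expNAB_p$ restricts to a diffeomorphism from $\mathcal{T}_p(0,v,w)$ onto $\mathcal{T}_\nabla(p,q,r)$ so that $G$ is well-defined on the whole triangle. The only point worth a word of care is that the differential must be interpreted at interior (or at least admissible) points $sv+tw$, where the affine formula for $G$ genuinely applies, so that the one-sided derivative computation above is legitimate; this is automatic from the range $0\le s,t,s+t\le1$ specified in the definition of $F$. For later use in the vector-valued setting, note that the same argument applies componentwise to $\R^2$-valued $f$ without change.
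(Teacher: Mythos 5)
Your argument is correct and is exactly what the paper has in mind: the paper states this proposition as an immediate consequence of the fact that $F\circ\expNAB_p$ is by construction affine on $\mathcal{T}_p(0,v,w)$, which is precisely the observation you spell out. Your explicit differentiation in the $(s,t)$-coordinates simply makes that immediacy explicit, so nothing is missing and no different route is taken.
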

%%%%%%%

%%%%%%%%
%\begin{proof}
%We prove the first statement,
%\[
%\begin{split}
%d(F\circ\expNAB_p)_{sv + tw}(v) &=
%\left.\deriv{}{\tau}\right|_{\tau=0} F\circ\expNAB_p(sv + tw + \tau v) \\
%&= \left.\deriv{}{\tau}\right|_{\tau=0} \brk{ f(p) + (s+\tau)\, (f(q) - f(p)) + t\,(f(r) - f(p)} \\
%&= f(q) - f(p).
%\end{split}
%\]
%The proof of the second statement is analogous.
%
%\end{proof}
%%%%%%%

%%%%%%%%%
\begin{proposition}
\label{prop:dF2}
Let $f$ and $F$ be defined as above.
$F$ is affine in the following sense: for every $x\in \mathcal{T}_\nabla(p,q,r)$, 
\[
dF_x(\Pi_p^x v) = f(q) - f(p)
\Textand
dF_x(\Pi_p^x w) = f(r) - f(p).
\]
\end{proposition}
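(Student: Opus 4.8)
The plan is to reduce Proposition~\ref{prop:dF2} to Proposition~\ref{prop:dF1} by the chain rule, using the identification of $d\expNAB_p$ provided by Lemma~\ref{lemma:dexp}. Write $x = \expNAB_p(sv+tw)$ for the appropriate $s,t$ with $0\le s,t,s+t\le 1$; such $(s,t)$ exist by Corollary~\ref{cor:2.4}, which identifies $\mathcal{T}_\nabla(p,q,r)$ with the image under $\expNAB_p$ of $\mathcal{T}_p(0,v,w)$. The composition $F\circ\expNAB_p$ is defined on $\mathcal{T}_p(0,v,w)$, and for any tangent vector $u\in T_p\M$ we have the chain rule identity $d(F\circ\expNAB_p)_{sv+tw}(u) = dF_x\big((d\expNAB_p)_{sv+tw}(u)\big)$.

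Next I would invoke Lemma~\ref{lemma:dexp}, which asserts $(d\expNAB_p)_{sv+tw} = \Pi_p^{\expNAB_p(sv+tw)} = \Pi_p^x$. Substituting this into the chain rule gives
\[
d(F\circ\expNAB_p)_{sv+tw}(u) = dF_x\big(\Pi_p^x u\big)
\]
for every $u\in T_p\M$. Taking $u=v$ and $u=w$ in turn, the left-hand sides are exactly the quantities computed in Proposition~\ref{prop:dF1}, namely $f(q)-f(p)$ and $f(r)-f(p)$ respectively. This yields $dF_x(\Pi_p^x v) = f(q)-f(p)$ and $dF_x(\Pi_p^x w) = f(r)-f(p)$, which is the claim.

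There is essentially no serious obstacle here; the statement is a repackaging of Proposition~\ref{prop:dF1} in an intrinsic, point-dependent frame via the differential formula of Lemma~\ref{lemma:dexp}. The only points requiring a word of care are (i) noting that $F$ is differentiable on $\mathcal{T}_\nabla(p,q,r)$ — which follows since $F\circ\expNAB_p$ is affine (hence smooth) and $\expNAB_p$ is a smooth embedding on $\scrO_p$, so $F = (F\circ\expNAB_p)\circ(\expNAB_p)^{-1}$ is smooth on the triangle — and (ii) making sure the base point $sv+tw$ at which we differentiate indeed lies in $\scrO_p$ and in the closed triangle, which is guaranteed by the hypotheses $v,w,v+w\in\scrO_p$ together with convexity of $\scrO_p$. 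With those remarks in place the proof is a one-line chain-rule computation.
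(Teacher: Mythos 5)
Your proof is correct and is essentially the paper's argument in compressed form: the paper differentiates $F$ along the curve $\tau\mapsto\expNAB_x(\tau\,\Pi_p^x v)$ and rewrites it via Lemma~\ref{lemma:additive_exp} as $\expNAB_p(sv+tw+\tau v)$, which is exactly the content of your chain-rule step once Lemma~\ref{lemma:dexp} (itself proved from Lemma~\ref{lemma:additive_exp}) identifies $(d\expNAB_p)_{sv+tw}=\Pi_p^x$. Routing through Lemma~\ref{lemma:dexp} rather than redoing the additive-exponential computation is only a cosmetic difference, and your remarks on differentiability and on $sv+tw\in\scrO_p$ are fine.
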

%%%%%%%%

%%%%%%%
\begin{proof}
By definition,
\[
dF_x(\Pi_p^x v) =  \left.\deriv{}{\tau}\right|_{\tau=0} F\circ\gamma(\tau),
\]
where $\gamma(0) = x$ and $\dot{\gamma}(0) = \Pi_p^x v$. Taking
\[
\gamma(\tau) = \expNAB_x(\tau\, \Pi_p^x v),
\]
we obtain
\[
\begin{split}
dF_x(\Pi_p^x v) &=  \left.\deriv{}{\tau}\right|_{\tau=0} F\circ \expNAB_x(\tau\, \Pi_p^x v).
\end{split}
\]
Let $x = \expNAB_p(sv  +tw)$. Then, using once again Lemma~\ref{lemma:additive_exp},
\[
\begin{split}
dF_x(\Pi_p^x v) &=  \left.\deriv{}{\tau}\right|_{\tau=0} F\circ \expNAB_{\expNAB_p(sv  +tw)}(\tau\, \Pi_p^{\expNAB_p(sv  +tw)} v) \\
&=  \left.\deriv{}{\tau}\right|_{\tau=0} F\circ \expNAB_{p}(sv  + tw + \tau  v) \\
&= d(F\circ\expNAB_p)_{sv + tw}(v) \\
&=  f(q) - f(p),
\end{split}
\]
where the last equality follows from Proposition~\ref{prop:dF1}.
The other statements are analogous.
\end{proof}
%%%%%%

%%%%%%%%%%%%%%%%%%%%%%%%%%%%%%%%%%%%%%%%%%%%%
\section{Discrete elastic model}
\label{sec:model}

Let $(\M,\g)$ be a compact two-dimensional Riemannian manifold; for simplicity, we assume $\M$ to be a topological disc with a smooth boundary.
Let $\nabla$ be a flat, symmetric connection on $\M$. 

Let $\{\a,\b,\c\}$ be three $\nabla$-parallel vector fields, satisfying
\[
\a + \b + \c = 0,
\] 
such that every two constitute a parallel frame field. That is, $\a$, $\b$ and $\c$ are nowhere co-linear and for every $p,q\in\M$,
\[
\a_p = \Pi_q^p \a_q,
\quad
\b_p = \Pi_q^p \b_q
\Textand
\c_p = \Pi_q^p \c_q.
\]
The vector fields $\a$, $\b$ and $\c$ represent the three crystallographic axes of an hexagonal lattice.
Note that this structure is independent of any metric properties. 

%%%%%%%%%%%%%%%%%%%%%%%%%%%%%%%%%%%%%%%%%%%%%
\subsection{Metric}

We endow $\M$ with a metric $\g$. At this stage, we do not impose any a priori relation between $\g$ and $\nabla$, and in particular, $\nabla$ is not the Riemannian connection corresponding to $\g$. Since $\{\a,\b\}$ is a frame field, the metric is fully determined by three real-valued functions on $\M$,
\[
\g_{\a\a} = \g(\a,\a) = |\a|^2,
\qquad
\g_{\b\b} = \g(\b,\b) = |\b|^2
\textand
\g_{\a\b} = \g(\a,\b).
\] 

For $p,q\in \M$, denote by $d(p,q)$ the distance between $p$ and $q$ induced by $\g$.
Note that it is not equal to the length of the $\nabla$-geodesic from $p$ to $q$. The following proposition bounds the discrepancy between the two:

%%%%%%%%%%
\begin{proposition}
\label{prop:dist_estimate}
There exists constants $C>0$ and $\delta>0$, such that for all $p\in\M$ and every $v\in T_p\M$, 
$|v|<\delta$, 
\begin{equation}
\label{eq:dist_estimate}
|d(p,q) - |v|| \le C|v|^2,
\end{equation}
where $q = \expNAB_p v$.
\end{proposition}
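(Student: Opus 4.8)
The plan is to compare the Riemannian distance $d(p,q)$ with the $\nabla$-geodesic between $p$ and $q$, and to show both differ from $|v|$ by at most a quadratic term, uniformly in $p$. First I would set up the $\nabla$-geodesic curve $\sigma(t) = \expNAB_p(tv)$, $t\in[0,1]$, which by Corollary~\ref{cor:lines_are_geodesics} is indeed a $\nabla$-geodesic from $p$ to $q$. Its $\g$-length is $\ell = \int_0^1 |\dot\sigma(t)|_\g\,dt$, where by Lemma~\ref{lemma:dexp} one has $\dot\sigma(t) = (d\expNAB_p)_{tv}(v) = \Pi_p^{\sigma(t)} v$. Since $\sigma(0)=p$ and $\sigma(1)=q$, the curve $\sigma$ is an admissible competitor in the definition of $d(p,q)$, giving the upper bound $d(p,q)\le \ell$.

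The heart of the argument is a two-sided estimate $\bigl|\ell - |v|\bigr| \le C|v|^2$. For this I would write $|\dot\sigma(t)|_\g^2 = \g_{\sigma(t)}(\Pi_p^{\sigma(t)}v,\,\Pi_p^{\sigma(t)}v)$ and compare it with $\g_p(v,v)=|v|^2$. The function $t\mapsto \g_{\sigma(t)}(\Pi_p^{\sigma(t)}v,\Pi_p^{\sigma(t)}v)$ is smooth; its value at $t=0$ is $|v|^2$. Its $t$-derivative is $(\nabla_{\dot\sigma}\g)(\Pi_p^{\sigma}v,\Pi_p^{\sigma}v)$ (the parallel-transport terms drop out since $\Pi_p^{\sigma(t)}v$ is $\nabla$-parallel along $\sigma$), and since $\M$ is compact, $\|\nabla\g\|$ is bounded by some constant independent of $p$, while $|\dot\sigma(t)|$ and $|\Pi_p^{\sigma(t)}v|$ are comparable to $|v|$ for $|v|<\delta$ (again using compactness: parallel transport has operator norm bounded uniformly on the compact $\M$). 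Hence this derivative is $O(|v|^3)$ uniformly, so $\bigl|\,|\dot\sigma(t)|_\g^2 - |v|^2\,\bigr| \le C|v|^3$ for all $t\in[0,1]$, and taking square roots and integrating gives $\bigl|\ell - |v|\bigr| \le C|v|^2$. Combined with $d(p,q)\le\ell$, this already yields the upper bound $d(p,q) - |v| \le C|v|^2$.

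For the lower bound $d(p,q)\ge |v| - C|v|^2$, I would use a standard comparison with the Levi-Civita geometry of $\g$: there exist $\delta>0$ and a constant such that for $|v|$ small, the $\g$-geodesic ball of radius $c|v|$ about $p$ is contained in the image of $\sigma$'s neighborhood, and more simply, one compares $\Pi_p^q v$ with the vector $u\in T_p\M$ for which $q = \expLC_p u$ (the Levi-Civita exponential), so that $d(p,q) = |u|_\g$; a uniform (in $p$, by compactness) estimate $\bigl|\,|u|_\g - |v|\,\bigr|\le C|v|^2$ then follows from the fact that both $\expNAB_p$ and $\expLC_p$ have the same differential (the identity) at $0$ and depend smoothly on $p$, so their "difference" is quadratic in $v$ with $p$-uniform constants. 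The main obstacle is precisely the extraction of \emph{uniform} constants: one must verify that $\delta$ and $C$ can be chosen independently of $p\in\M$, which is where compactness of $\M$ (and the resulting uniform bounds on $\g$, $\nabla\g$, the injectivity radii of both exponential maps, and the norms of parallel transport) is used; the rest is a routine Taylor expansion.
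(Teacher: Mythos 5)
Your proposal is correct, and it follows the paper for the first half while replacing the paper's key external ingredient in the second half. The computation of the length of the $\nabla$-geodesic $\sigma(t)=\expNAB_p(tv)$ via Lemma~\ref{lemma:dexp}, and the uniform estimate $\bigl|\ell(\sigma)-|v|\bigr|\le C|v|^2$, are exactly the paper's steps \eqref{eq:ineqC}--\eqref{eq:ineqE}; your way of getting the pointwise comparison of $|\Pi_p^{\sigma(t)}v|$ with $|v|$ (differentiating $t\mapsto\g_{\sigma(t)}(\Pi_p^{\sigma(t)}v,\Pi_p^{\sigma(t)}v)$, using $\nabla$-parallelism so only the $\nabla\g$ term survives, and compactness to bound it) is an equivalent compactness argument to the paper's parallel-transport estimate \eqref{eq:ineqB} followed by the iteration in \eqref{eq:ineqE}. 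Where you genuinely diverge is the lower bound $d(p,q)\ge|v|-C|v|^2$: the paper imports \cite[Proposition 2.2]{KM16}, which bounds $0\le\ell(\gamma)-d(\gamma(0),\gamma(1))\le C_1 d^3$ for curves of uniformly bounded geodesic curvature, and then does a small bootstrap \eqref{eq:ineqF} to trade powers of $d$ for powers of $|v|$; you instead compare $\expNAB_p$ with the Levi-Civita exponential, writing $q=\expLC_p u$, $d(p,q)=|u|_\g$, and deducing $|u-v|\le C|v|^2$ from the fact that $(\expLC_p)^{-1}\circ\expNAB_p$ fixes the origin with differential the identity and depends smoothly on $p$ over the compact $\M$. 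Your route is more self-contained (no citation, no $d$-versus-$|v|$ bootstrapping), and in fact the jet comparison alone gives both bounds at once, so your length computation is strictly speaking redundant; the paper's route avoids invoking the LC exponential and its uniform injectivity radius and reuses a ready-made estimate from earlier work. Two points you should make explicit to close your argument: the identity $d(p,q)=|u|_\g$ requires $q$ to lie in a uniform LC-normal neighborhood of $p$ (uniform injectivity radius by compactness, with $\g$ extended to a slightly larger open manifold so that $\expLC$ is defined near $\pl\M$ --- note that for the lower bound only $d_\M(p,q)\ge d_{\tilde\M}(p,q)=|u|$ is needed, so the boundary causes no harm), and the uniform comparability of $|\Pi_p^{\sigma(t)}v|$ with $|v|$ used in your derivative bound should be justified by a short Gronwall/bootstrap step, exactly as the paper does in \eqref{eq:ineqE}.
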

%%%%%%%%%

\begin{proof}
First, since $\M$ is smooth and compact and $\nabla$ is smooth, the geodesic curvature of all $\nabla$-geodesics is bounded by some constant $K>0$. In such case, 
it was proved in \cite[Proposition 2.2]{KM16} that there exist constants $C_1>0$ and $\delta_1>0$, both depending on $K$, such that for every $\nabla$-geodesic $\gamma:I\to\M$ of length $\ell(\gamma)<\delta_1$,
\begin{equation}
0\le \ell(\gamma) - d(\gamma(0),\gamma(1)) \le C_1\, d^3(\gamma(0),\gamma(1)).
\label{eq:ineqA}
\end{equation}

Second, relying again on the compactness of $\M$, there exist constants $C_2>0$ and $\delta_2>0$, such that for every $p,q\in\M$ satisfying $d(p,q)<\delta_2$,
\begin{equation}
\sup_{|v|=1} ||\Pi_p^q v| - |v|| \le C_2\,d(p,q).
\label{eq:ineqB}
\end{equation}

Third, still by the compactness of $\M$ and the smoothness of $\nabla$, there exists a $\delta>0$, such that for all $p\in\M$ and $v\in T_p\M$ satisfying $|v|<\delta$, 
\[
\ell(\gamma) \le \min(\delta_1,\delta_2),
\] 
where $\gamma:I\to\M$ is given by $\gamma(t) = \expNAB_p(tv)$. Moreover,
\begin{equation}
\ell(\gamma) = \int_0^1 |\dot{\gamma}(t)|\, dt 
= \int_0^1 |(d\expNAB_p)_{v\, t}(v)|\, dt 
= \int_0^1 |\Pi_p^{\gamma(t)} v|\, dt,
\label{eq:ineqC}
\end{equation}
where the second equality follows from the chain rule and the third equality follows from Lemma~\ref{lemma:dexp}. Since for all $t\in I$, $d(p,\gamma(t)) \le \ell(\gamma) \le \delta_2$,
it follows from \eqref{eq:ineqB} that for all $t\in I$
\[
| |\Pi_p^{\gamma(t)} v| - |v|| \le C_2\, d(p,\gamma(t)\, |v| \le C_2\,\ell(\gamma)\, |v|.
\]
Substituting into \eqref{eq:ineqC},
\[
|\ell(\gamma) - |v|| \le \int_0^1 ||\Pi_p^{\gamma(t)} v| - |v||\, dt \le 
C_2\, \ell(\gamma) |v|.
\]
Iterating this last inequality,
\begin{equation}
\begin{split}
|\ell(\gamma) - |v|| &\le C_2\, |v|^2 + C_2 |\ell(\gamma) - |v|| \,|v| 
\le (C_2 + C_2^2 \delta_2) |v|^2.
\end{split}
\label{eq:ineqE}
\end{equation}
Denoting $q = \expNAB_p v$, we further obtain from \eqref{eq:ineqA} that
\begin{equation}
\begin{split}
|d(p,q) - \ell(\gamma)| &\le C_1\,d^3(p,q) \\
&\le C_1\delta_2 \,d^2(p,q) \\
&\le  C_1\delta_2 |v|^2 + C_1\delta_2 (d(p,q) + |v|)|d(p,q) - |v|| \\
&\le C_1\delta_2 |v|^2 + \eta\, |d(p,q) - |v||.
\end{split}
\label{eq:ineqF}
\end{equation}
where $\eta = C_1\delta_2 (\delta + \delta_2)$.
Combining \eqref{eq:ineqE} and \eqref{eq:ineqF}, 
\[
\begin{split}
|d(p,q) - |v|| &\le |d(p,q) - \ell(\gamma)| + |\ell(\gamma) - |v|| \\
&\le  (C_2 + C_2^2 \delta_2) |v|^2 +  C_1\delta_2 |v|^2 + \eta\, |d(p,q) - |v||,
\end{split}
\]
By taking $\delta_2$ and $\delta$ sufficiently small such that $\eta < 1$, we finally obtain that
\[
|d(p,q) - |v|| \le \frac{C_2 + C_2^2 \delta_2 + C_1\delta_2 }{1 -\eta} |v|^2,
\]
which completes the proof.
\end{proof}

%%%%%%%%%%%%%%%
\subsection{Triangulation}
\label{sec:triangulation}

For $\e>0$ sufficiently small, let $(V_\e,E_\e)$ be a graph of a hexagonal triangulation of a two-dimensional submanifold $\M_\e\subset \M$, that satisfies: 
\begin{enumerate}
\item $V_\e$ is $C\e$-dense in $\M$, with $C=2\max_{p\in\M}\{|a_p|,|b_p|, |c_p|\}$.
\item The $\nabla$-geodesic from a vertex to its neighbor has initial velocity equal to one of the crystallographic axes and continues for time $\e$.
\item The graph is maximal, in the sense that there is no $\M_\e'\subset \M$ with an associated graph $(V_\e',E_\e')$ that satisfies the above properties and contains $(V_\e,E_\e)$ as a strict subset.
\end{enumerate}

In more detail,
every point $p\in V_\e$, excluding boundary points, has six neighbors, given by
\[
\{\expNAB_p(\pm \e\a_p), \,\,\,
\expNAB_p(\pm \e\b_p), \,\,\,
\expNAB_p(\pm \e\c_p)\}.
\] 

%\begin{center}
%\btkz[scale=0.9]
%	\tkzDefPoint(0,0){A};
%	\tkzDefPoint(2,0){Apa};
%	\tkzDefPoint(-2,0){Ama};
%	\tkzDefPoint(-1,1.7){Apb};
%	\tkzDefPoint(1,-1.7){Amb};
%	\tkzDefPoint(-1,-1.7){Apc};
%	\tkzDefPoint(1,1.7){Amc};
%	\tkzDrawPoints(A,Apa,Ama,Apb,Amb,Apc,Amc);
%	\tkzLabelPoint[above left](A){$p$}
%	\tkzLabelPoint[right=0.5cm](Apa){$\expNAB_p(\e\a_p)$}
%	\tkzLabelPoint[left=0.5cm](Ama){$\expNAB_p(-\e\a_p)$}
%	\tkzLabelPoint[above left =0.2cm](Apb){$\expNAB_p(\e\b_p)$}
%	\tkzLabelPoint[below right =0.2cm](Amb){$\expNAB_p(-\e\b_p)$}
%	\tkzLabelPoint[below left =0.2cm](Apc){$\expNAB_p(\e\c_p)$}
%	\tkzLabelPoint[above right=0.2cm](Amc){$\expNAB_p(-\e\c_p)$}
%	\draw  (0,0) edge[bend left=20] (2,0);
%	\draw (0,0) edge[bend left=20] (-2,0);
%	\draw (0,0) edge[bend left=20] (-1,1.7);
%	\draw (0,0) edge[bend left=20] (-1,-1.7);
%	\draw (0,0) edge[bend left=20] (1,1.7);
%	\draw (0,0) edge[bend left=20] (1,-1.7);
%\etkz
%\end{center}

Note that we have made here explicit use of Lemma~\ref{lemma:additive_exp} and the fact that $\a$, $\b$ and $\c$ are parallel vector fields to obtain that $\M$ can be triangulated in this way: indeed, for every $p\in\M$, a (local) triangulation of $T_p\M$ by straight lines parallel to $\a_p$, $\b_p$ and $\c_p$ maps under $\expNAB_p$ into a (local) triangulation of $\M$ with the desired properties.

We denote by $p\sim_\e q$ the fact that $p,q\in V_\e$ are neighbors in the graph.
Let $p\sim_\e q\sim_\e r\sim_\e p$ be the vertices of a triangle; as above, we denote by $\mathcal{T}_\nabla(p,q,r)\subset \M$ the convex hull of $p,q,r$ with respect to $\nabla$-geodesics. We denote by $\mathcal{K}_\e$ the collection of all $\nabla$-geodesic triangles with vertices in $V_\e$; we denote by $\mathcal{K}_\e^+\subset\mathcal{K}_\e$ those triangles that can be surrounded by a path going first along the positive $\a$ direction, then along the positive $\b$ direction, and finally along the positive $\c$ direction; we denote by $\mathcal{K}_\e^-\subset\mathcal{K}_\e$ those triangles that can be surrounded by a path going first along the negative $\a$ direction, then along the negative $\b$ direction, and finally along the negative $\c$ direction.
Every $K\in \mathcal{K}_\e^+$ is of the form
\[
K = \mathcal{T}_\nabla(p,q,r),
\] 
with the convention of ordering the vertices such that 
\[
q = \expNAB_p(\e\a_p),
\qquad
r = \expNAB_q(\e\b_q)
\textand
p = \expNAB_r(\e\c_r).
\]
Every $K\in \mathcal{K}_\e^-$ is of the form
\[
K = \mathcal{T}_\nabla(q,p,s),
\] 
with the convention of ordering the vertices such that 
\[
p = \expNAB_q(-\e\a_q),
\qquad
s = \expNAB_p(-\e\b_p)
\textand
q = \expNAB_s(-\e\c_s);
\]
see Figure~\ref{fig:triangles}.

\begin{figure}[h]
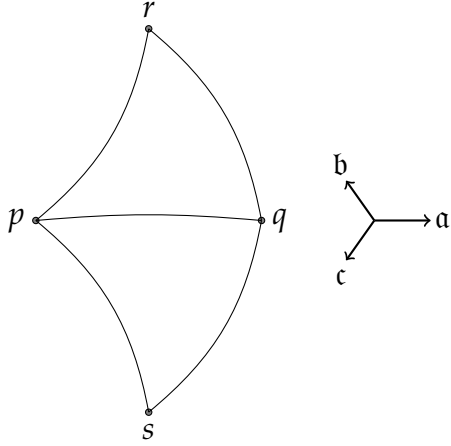

\begin{center}
\btkz[scale=1.5]
	\tkzDefPoint(0,0){p};
	\tkzDefPoint(2,0){q};
	\tkzDefPoint(1,-1.7){s};
	\tkzDefPoint(1,1.7){r};
	\tkzDrawPoints(p,q,r,s);
	\draw (0,0) edge[bend left=5] (2,0);
	\draw (0,0) edge[bend right=20] (1,1.7);
	\draw (0,0) edge[bend left=20] (1,-1.7);
	\draw (2,0) edge[bend left=20] (1,-1.7);
	\draw (2,0) edge[bend right=20] (1,1.7);
	\draw[->,thick] (3,0) -- (3.5,0);
	\draw[->,thick] (3,0) -- (2.75,0.353);
	\draw[->,thick] (3,0) -- (2.75,-0.353);
	\tkzText(3.6,0){$\a$};
	\tkzText(2.7,0.5){$\b$};
	\tkzText(2.7,-0.5){$\c$};
	\tkzLabelPoint[left](p){$p$}
	\tkzLabelPoint[right](q){$q$}
	\tkzLabelPoint[above](r){$r$}
	\tkzLabelPoint[below](s){$s$}
\etkz
\end{center}
\caption{$\nabla$-geodesic triangulation of $\M$ by triangles with edges parallel to the three crystallographic axes $\a$, $\b$ and $\c$.
The upper triangle $\mathcal{T}_\nabla(p,q,r)$ is in $\mathcal{K}_\e^+$; the lower $\mathcal{T}_\nabla(q,p,s)$ is in $\mathcal{K}_\e^-$.}
\label{fig:triangles}
\end{figure}

Finally, we denote by $\M_\e\subset \M$ the union of the geodesic triangles,
\[
\M_\e = \bigcup \mathcal{K}_\e.
\]
$\M_\e$ converges to $\M$ asymptotically, in the sense that there exists a constant $C>0$, such that
\begin{equation}
\Vol_\g(\M\setminus\M_\e) = \int_{\M\setminus\M_\e} \Volume \le C\e,
\label{eq:vol_delta_M_zero}
\end{equation}
where $\Volume$ is the Riemannian volume form.

%%%%%%%%%%%%%%%
\subsection{Discrete energies}
\label{sec:discrete_energies}

For every $\e>0$, the vertices $V_\e$ represent a discrete lattice. A configuration of that lattice is a map
\[
f_\e : V_\e \to \R^2.
\]
We denote the set of configurations by $L^2(V_\e;\R^2)$; the rationale for this notation will be made clear below.
With each configuration of $V_\e$ we associate a \emph{discrete elastic energy},
$\E_\e:L^2(V_\e;\R^2)\to\R$, having two contributions. 
The first is a bond energy 
\begin{equation}
\Ebond(f_\e) =   \sum_{p\sim_\e q} 
\mu_\e(p,q)\,
\Wb\brk{\frac{|f_\e(q) - f_\e(p)|}{d(p,q)}}.
\label{eq:Ebond}
\end{equation}
Here $\mu_\e(p,q)$ is the area of the set of points in $\M_\e$ that are closer to the edge between $p$ and $q$ than to any other edge; both distance and area are relative to $\g$.
The function
$\Wb:\R\to\R^+$ is an $\e$-independent bond energy, modeling pairwise inter-particle interactions.
Its argument is the relative elongation of an edge.

We assume that the bond energy $\Wb:[0,\infty)\to [0,\infty)$ satisfies the following conditions:
\begin{enumerate}
\setlength{\itemsep}{0pt}
\item $\Wb(r)=0$ if and only if $r=1$.
\item Coercivity: there exists a constant $\alPhi>0$ such that
\begin{equation}
\Wb(r)\ge \alPhi (r-1)^2.
\label{eq:coercive_Wb}
\end{equation}
\item Bounded growth: there exists a constant $\CPhi>0$ such that
\begin{equation}
\Wb(r) \le \CPhi(1 + r^2).
\label{eq:boundedness_Wb}
\end{equation}
\item Lipschitz continuity: there exists a constant $\LPhi>0$ such that for every $r,s>0$,
\begin{equation}
|\Wb(r) - \Wb(s)| \le \LPhi(1 + |r| + |s|)|r-s|.
\label{eq:lipschitz_Wb}
\end{equation}
\end{enumerate}
For example, the choice or $\Wb(r) = (r-1)^2$, corresponding to Hookean springs, satisfies these conditions.
As a non-example, an exponentiated Hencky energy \cite{NGL15}, $\Wb(r) = \sinh | \log r|$, is a bond energy not satisfying the bounded growth condition.

The second contribution to the discrete energy penalizes for changes in the (signed) volume of triangles,
\begin{equation}
\Evol(f_\e) = \sum_{\mathcal{T}_\nabla(p,q,r)\in\mathcal{K}_\e} \mu_\e(p,q,r)\,
\Psi\brk{\frac{(f_\e(q) - f_\e(p))\wedge(f_\e(r) - f_\e(q))}{\e^2 \, \nu(p,q,r)  \, (\partial_1\wedge \partial_2)}}.
\label{eq:Evol}
\end{equation}
Here, $\mu_\e(p,q,r)$ is the area of $\mathcal{T}_\nabla(p,q,r)$, and $\partial_1\wedge \partial_2$ is the standard unit bivector in $\R^2$; the argument of $\Psi$ is a ratio of top-rank multivectors in $\R^2$, hence can be viewed as a scalar. 
$\nu(p,q,r)$ is defined as follows:
set
\begin{equation}
\nu = |\a\wedge\b| = (\g_{\a\a} \g_{\b\b} - \g_{\g\b}^2)^{1/2} : \M\to \R,
\label{eq:nu}
\end{equation}
and denote by $\nu(p,q,r)$ the value of $\nu$ at the center of mass of $\mathcal{T}_\nabla(p,q,r)$.

We assume that the volumetric function $\Psi:\R\to [0,\infty)$ satisfies the following conditions:
\begin{enumerate}
\setlength{\itemsep}{0pt}
\item $\Psi(a)=0$ if and only if $a=1$.
\item Coercivity: there exists a constant $\alPsi>0$ such that
\begin{equation}
\Psi(a)>\alPsi \sqrt{|a|} \quad \text{ for all } \,a<0.
\label{eq:coercive_Psi}
\end{equation}
\item Bounded growth: there exists a constant $\CPsi>0$ such that
\begin{equation}
\Psi(a)<\CPsi(1+|a|).
\label{eq:boundedness_Psi}
\end{equation}
\item Lipschitz continuity: there exists a constant $\LPsi$ such that for every $a,b$,
\begin{equation}
|\Psi(a)-\Psi(b)|\le \LPsi |a-b|.
\label{eq:lipschitz_Psi}
\end{equation}
\end{enumerate}
For example, $\Psi(a) = \beta|a-1|$, satisfies these conditions.

The total discrete elastic energy is the sum of bond energy and the volumetric energy,
\begin{equation}
\E_\e(f_\e) = \Ebond(f_\e) + \Evol(f_\e).
\label{eq:Etotal}
\end{equation}

%%%%%%%%%%%%%%%%%%%%%%%%%%%%%
\subsection{Piecewise-affine extension}

\subsubsection{Extension of discrete configurations}

Given $f_\e\in L^2(V_\e;\R^2)$, we extend it into a function $F_\e\in W^{1,\infty}(\M,\R^2)$; we denote the extension map $f_\e \mapsto F_\e$ by $\iota_\e$. 

Within $\M_\e$, we extend $f_\e$ in each triangle as described in Section~\ref{sec:prelim}:
for $\mathcal{T}_\nabla(p,q,r) \in\mathcal{K}_\e^\pm$, 
\[
\begin{split}
F_\e(\expNAB_p(\pm s\e a_p \mp t \e \c_p)) &=  (1-s-t) f_\e(p) + s\, f_\e(q) + t\,f_\e(r),
\end{split}
\]
where $0\le s,t,s+t\le 1$.
By Proposition~\ref{prop:dF2}, within a triangle $\mathcal{T}_\nabla(p,q,r) \in\mathcal{K}_\e^\pm$, 
\begin{equation}
\begin{aligned}
dF_\e(\pm\e \a) &= f_\e(q) - f_\e(p) \\
dF_\e(\pm\e \b) &= f_\e(r) - f_\e(q) \\
dF_\e(\pm\e \c) &= f_\e(p) - f_\e(r).
\end{aligned}
\label{eq:dF_in_triangles}
\end{equation}

We extend $F_\e$ to $\M\setminus \M_\e$ such that
\begin{equation}
\label{eq:F_e_extension_to_boundary}
\|dF_\e\|_{L^\infty(\M)} \le \Cext \|dF_\e|_{\M_\e}\|_{L^\infty(\M_\e)}
\textand 
\|dF_\e\|_{L^2(\M)} \le \Cext \|dF_\e|_{\M_\e}\|_{L^2(\M_\e)},
\end{equation}
where $\Cext>0$ is independent of $\e$ and $F_\e$.

Such an extension can be achieved, for example,
as follows: extend $\M$ slightly into a larger manifold $\tilde{\M}$, such that $\M\Subset \tilde{\M}$, and extend $\g$ and $\nabla$ smoothly to $\tilde{\M}$.
For $\e$ small enough, we can extend $(V_\e,E_\e)$ into a geodesic triangulation $(\tilde{V}_\e,\tilde{E}_\e)$ of $(\tilde{\M}_\e,\tilde{\g})$, where $\M\subset \tilde{\M}_\e\subset \tilde{\M}$.
Every vertex in $v\in \tilde{V}_\e\setminus V_\e$ has at least one neighbor in $V_\e$  (and at most three).
Denote these vertices by $v_1,\ldots,v_k$.
Given $f_\e$, we extend it to $v$ by defining $f_\e(v) = \frac{1}{k} \sum_i f_\e(v_i)$.
We then extend $f_\e$ into a piecewise-affine function $F_\e:\tilde{\M}_\e\to \R^d$ as above, and restrict it to $\M$.
A straightforward calculation, using the uniform bounds on the angles between edges, shows that conditions \eqref{eq:F_e_extension_to_boundary} are satisfied.

\subsubsection{An integral representation of the discrete energy}
We start be defining seven families of piecewise-constant functions $\M_\e\to\R$, parametrized by the lattice spacing $\e$. The first three,
\[
\rho_\e^{\a},\rho_\e^{\b},\rho_\e^{\b}: \M_\e\to\R 
\]
return for $x\in\mathcal{T}_\nabla(p,q,r)$ the relative area in $\mathcal{T}_\nabla(p,q,r)$ of the region that is closest to the edge parallel to $\a$, $\b$ and $\c$, respectively; in a Euclidean equilateral triangle, these functions would be equal to $1/3$.
For two triangles $\mathcal{T}_\nabla(p,q,r)$ and $\mathcal{T}_\nabla(q,p,s)$ sharing the edge $pq$, we have
\[
\mu_\e(p,q) = \int_{\mathcal{T}_\nabla(p,q,r) \cup \mathcal{T}_\nabla(q,p,s)} \rho_\e^\a\,\Volume.
\]

The next three families of functions,
\[
D_\e^{\a},D_\e^{\b},D_\e^{\c}: \M_\e\to\R,
\]
return for a point in $\mathcal{T}_\nabla(p,q,r)$ 
the $\e$-rescaled distances between the vertices of that triangle:  
\[
D_\e^{\a} = \frac{d(q,p)}{\e},
\qquad
D_\e^{\b} = \frac{d(r,q)}{\e}
\Textand
D_\e^{\c} = \frac{d(p,r)}{\e}.
\]

Let $F_\e = \iota_\e(f_\e)$. By \eqref{eq:dF_in_triangles} and the definition of $D_\e^{\a,\b,\c}$, for every point in $\mathcal{T}_\nabla(p,q,r)$,
\[
\begin{aligned}
\frac{dF_\e(\a)}{D_\e^{\a}} &=\pm \frac{f_\e(q) - f_\e(p)}{d(p,q)} \\
\frac{dF_\e(\b)}{D_\e^{\b}} &= \pm\frac{f_\e(r) - f_\e(q)}{d(q,r)} \\
\frac{dF_\e(\c)}{D_\e^{\c}} &= \pm\frac{f_\e(p) - f_\e(r)}{d(r,p)},
\end{aligned}
\]
where the sign depends on whether $\mathcal{T}_\nabla(p,q,r)$ is a triangle in $\mathcal{K}_\e^+$ or $\mathcal{K}_\e^-$.

Finally, we define 
\[
\nu_\e:\M_\e\to\R, 
\]
which for $x\in\mathcal{T}_\nabla(p,q,r)$ returns $\nu = |\a\wedge \b|$ evaluated at the center of mass, i.e., the value of $\nu(p,q,r)$ as defined in \eqref{eq:nu}.

Consider next the determinant of $dF_\e$. The intrinsic expression for the determinant is
\[
\det dF_\e = \star^2_\euc \circ (dF_\e \wedge dF_\e) \circ \star_\g^0 (1),
\]
where $\star_\g^0:\Omega^0(\M) \to \Omega^2(\M)$ and $\star^2_\euc:\Omega^2(\R^2)\to\Omega^0(\R^2)$ are the Hodge-dual operators on the graded algebras of multivectors (with respect to the metric $\g$ on $\M$ and the Euclidean metric in $\R^2$).
Now,
\[
\star_\g^0 (1) = \frac{\a\wedge\b}{|\a\wedge\b|} = \frac{\a\wedge\b}{\nu},
\]
hence
\[
(dF_\e \wedge dF_\e) \circ \star_\g^0 (1) = \frac{dF_\e(\a)\wedge dF_\e(\b)}{\nu},
\]
and in $\mathcal{T}_\nabla(p,q,r)$,
\[
\begin{split}
\det dF_\e &= \frac{dF_\e(\a)\wedge dF_\e(\b)}{\nu\, (\partial_1 \wedge\partial_2)} 
= \frac{\nu_\e}{\nu} \frac{(f_\e(q) - f_\e(p))\wedge (f_\e(r) - f_\e(q))}{\e^2 \nu(p,q,r) \, (\partial_1 \wedge\partial_2)} ,
\end{split}
\]
where we used \eqref{eq:dF_in_triangles} again, and the fact that $\nu_\e=\nu(p,q,r)$ in $\mathcal{T}_\nabla(p,q,r)$.

With these notations, we can write the discrete bond energy of $f_\e$ in terms of its extension $F_\e = \iota_\e(f_\e)$ as an integral over $\M_\e$:
\begin{equation}
\Ebond(f_\e) = \int_{\M_\e} \Wbond(dF_\e)\,\Volume,
\label{eq:Ebond_cont}
\end{equation}
where $\Wbond:T^*\M\otimes\R^2\to\R$ is given by
\begin{equation}
\Wbond(A) = \rho_\e^{\a} \, \Wb\brk{\frac{|A(\a)|}{D_\e^{\a}}} +
\rho_\e^{\b}\,  \Wb\brk{\frac{|A(\b)|}{D_\e^{\b}}} +
\rho_\e^{\c}\,  \Wb\brk{\frac{|A(\c)|}{D_\e^{\c}}}.
\label{eq:Weps}
\end{equation}
The discrete volumetric energy of $f_\e$ can be written as follows:
\begin{equation}
\Evol(f_\e) = \int_{\M_\e} \Wvol(dF_\e)\,\Volume,
\label{eq:Evol_cont}
\end{equation}
where $\Wvol:T^*\M\otimes\R^2\to\R$ is given by
\begin{equation}
\Wvol(A) = \Psi \brk{\frac{\nu}{\nu_\e}  \det A}.
\label{eq:Wvol}
\end{equation}
Thus, the total discrete energy is given by
\begin{equation}
\E_\e(f_\e) = \int_{\M_\e} \Wtot(dF_\e)\,\Volume,
\end{equation}
where $\Wtot:T^*\M\otimes\R^2\to\R$ is given by
\begin{equation}
\Wtot(A) = \Wbond(A) + \Wvol(A).
\label{eq:Wtot}
\end{equation}

We end this section by establishing asymptotic properties of the piecewise-constant functions defined on the triangulated surfaces:

%%%%%%%%%%
\begin{lemma}
\label{lm:limit_distances}
We have the following uniform limits in $\M_\e$: for $\fraku=\a,\b,\c$,
\[
\lim_{\e\to0} D_\e^{\fraku} = |\fraku|  \equiv D^\fraku.
\]
\end{lemma}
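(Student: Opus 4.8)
The plan is to show that $D_\e^{\fraku}$, which on a triangle $\mathcal{T}_\nabla(p,q,r)$ equals $d(p,q)/\e$ (for $\fraku=\a$, and analogously for $\b,\c$), converges uniformly to $|\fraku|$ as $\e\to0$. The key observation is that for an edge in the $\a$-direction, $q = \expNAB_p(\e\a_p)$, so by Proposition~\ref{prop:dist_estimate} applied to the tangent vector $v = \e\a_p$ (whose norm is $\e|\a_p|$, which is less than $\delta$ once $\e$ is small enough uniformly in $p$, since $\M$ is compact and $\a$ is bounded),
\[
\big| d(p,q) - \e|\a_p| \big| \le C\,\e^2 |\a_p|^2.
\]
Dividing by $\e$ gives $\big| D_\e^{\a} - |\a_p| \big| \le C\,\e\,|\a_p|^2 \le C\,\e\,\|\a\|_\infty^2$, where $\|\a\|_\infty := \max_{x\in\M}|\a_x|$. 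The right-hand side is independent of the triangle and tends to $0$ as $\e\to0$, giving the uniform estimate.

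The one remaining subtlety is that $D_\e^{\a}$ is compared on all of $\mathcal{T}_\nabla(p,q,r)$ against $|\a|$ evaluated at an arbitrary point $x$ of that triangle, not against $|\a_p|$; so I would add a triangle-diameter estimate. Since the triangle has $\nabla$-geodesic edges of length $O(\e)$, its $\g$-diameter is $O(\e)$ (e.g.\ by Proposition~\ref{prop:dist_estimate} again, or simply by the $C\e$-density bound on $V_\e$), and since $x\mapsto|\a_x|$ is smooth on the compact manifold $\M$, hence Lipschitz, we get $\big| |\a_x| - |\a_p| \big| \le L_\a\, d(x,p) \le C'\e$ for all $x\in\mathcal{T}_\nabla(p,q,r)$. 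Combining with the previous display via the triangle inequality,
\[
\sup_{x\in\M_\e}\big| D_\e^{\a}(x) - |\a_x| \big| \le C\,\e\,\|\a\|_\infty^2 + C'\e \xrightarrow[\e\to0]{} 0,
\]
which is exactly the claimed uniform convergence $D_\e^{\a}\to D^\a = |\a|$. The arguments for $\fraku=\b$ and $\fraku=\c$ are identical, using that $r = \expNAB_q(\e\b_q)$ and $p = \expNAB_r(\e\c_r)$ on triangles in $\mathcal{K}_\e^+$ (and the analogous relations with negative signs on $\mathcal{K}_\e^-$, which give the same distances since $d$ is symmetric and $|-\e\b_q| = \e|\b_q|$).

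I do not anticipate a real obstacle here; the content is entirely contained in Proposition~\ref{prop:dist_estimate} together with compactness of $\M$ and smoothness of the crystallographic vector fields. The only thing to be careful about is making the choice of "$\e$ small enough" uniform over $\M$ — this is where compactness enters, ensuring that $\e\,\|\a\|_\infty < \delta$ suffices simultaneously for every $p\in\M_\e$ (and every $\e$ below some threshold, the triangulation $\M_\e$ and all its edges lie inside the region where Proposition~\ref{prop:dist_estimate} applies). With that in hand the estimate is uniform by construction.
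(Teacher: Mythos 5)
Your proof is correct and follows the same route as the paper, which simply records this lemma as an immediate consequence of Proposition~\ref{prop:dist_estimate}; your elaboration (uniformity via compactness of $\M$ and boundedness of the crystallographic fields, plus the $O(\e)$ correction for evaluating $|\fraku|$ at an arbitrary point of the triangle rather than at a vertex) is exactly the detail the paper leaves implicit.
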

%%%%%%%%%

%%%%%%%
\begin{proof}
This is an immediate consequence of Proposition~\ref{prop:dist_estimate}.
\end{proof}
%%%%%%

%%%%%%%%%%
\begin{lemma}
\label{lm:limit_relative_weight}
We have the following uniform limits in $\M_\e$: for $\fraku=\a,\b,\c$,
\[
\lim_{\e\to0} \rho_\e^{\fraku} = \frac{|\fraku|}{|\a| + |\b| + |\c|} \equiv \rho^\fraku.
\]
\end{lemma}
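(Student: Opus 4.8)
\textbf{Proof proposal for Lemma~\ref{lm:limit_relative_weight}.}

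The plan is to reduce the statement to a computation in the tangent plane via the exponential map, and then to observe that as $\e\to0$ the relevant geodesic triangle, after pulling back by $\expNAB_p$ and rescaling by $\e$, converges to a fixed Euclidean triangle determined by the metric $\g_p$. Concretely, fix a triangle $\mathcal{T}_\nabla(p,q,r)\in\mathcal{K}_\e^+$ with $q=\expNAB_p(\e\a_p)$ and $r=\expNAB_p(-\e\c_p)$ (so that the displacement vectors in $T_p\M$ are $\e\a_p$ and $-\e\c_p$). By Corollary~\ref{cor:2.4}, this triangle is exactly the $\expNAB_p$-image of the Euclidean triangle $\mathcal{T}_p(0,\e\a_p,-\e\c_p)\subset T_p\M$. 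Since $(d\expNAB_p)_0 = \id$ and $\expNAB_p$ is smooth, rescaling by $1/\e$ we have that $\e^{-1}\exp^{-1}_p(\mathcal{T}_\nabla(p,q,r))$ converges, as $\e\to0$ and uniformly in $p$ (using compactness of $\M$ and smoothness of $\nabla$ and of the parallel fields $\a,\b,\c$), to the fixed triangle $\mathcal{T}_p(0,\a_p,-\c_p)$ in $T_p\M$.

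Next I would express $\rho_\e^\fraku$ in these coordinates. The function $\rho_\e^\fraku$ is the relative $\g$-area of the Voronoi-type region inside $\mathcal{T}_\nabla(p,q,r)$ closest (in $\g$-distance) to the edge parallel to $\fraku$. Pulling back the metric $\g$ by $\expNAB_p$ and writing $\g^{(\e)}_x = (\expNAB_p^*\g)_{\e x}$ for the rescaled pullback, one checks that $\g^{(\e)}_x \to \g_p$ in $C^0$ on the fixed triangle as $\e\to0$, again uniformly in $p$; this is where Lemma~\ref{lemma:dexp} (giving $(d\expNAB_p)_v=\Pi_p^{\expNAB_p v}$) together with the compactness of $\M$ enters, since it controls how the pullback metric varies. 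Both the partition of the triangle into the three nearest-edge regions and the areas of those regions depend continuously on the metric coefficients and on the triangle's vertices; hence $\rho_\e^\fraku$ converges to the corresponding relative area computed for the \emph{constant} metric $\g_p$ on the triangle with vertices $0,\a_p,-\c_p$. The final step is the elementary Euclidean-geometry fact: in a triangle equipped with a constant inner product, the locus of points closest to a given edge (the ``edge Voronoi cell'') has area proportional to the length of that edge --- the three cells meet at the incenter, and each cell is a union of two right triangles with the inradius as common height --- so the relative area of the cell of the edge of length $\ell_\fraku$ is $\ell_\fraku/(\ell_\a+\ell_\b+\ell_\c)$. Since the $\g_p$-lengths of the three edges are $|\a_p|,|\b_p|,|\c_p|$ (the edges are the geodesics with initial velocities $\pm\e\a,\pm\e\b,\pm\e\c$, so to leading order their $\g$-lengths are $\e|\a|,\e|\b|,\e|\c|$, consistent with Lemma~\ref{lm:limit_distances}), this yields the claimed limit $\rho^\fraku = |\fraku|/(|\a|+|\b|+|\c|)$.

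The main obstacle is the second step: making rigorous, and \emph{uniform in} $p$, the claim that the nearest-edge partition and its areas depend continuously on the metric. The partition boundaries are determined by $\g$-distance-to-a-geodesic, which is not as transparent as Euclidean distance; one must argue that these distance functions, and hence the dividing curves, converge uniformly (on the rescaled picture) to their flat-metric counterparts, and that no degeneracy occurs --- this is guaranteed because the limiting triangle is non-degenerate ($\a_p,\b_p,\c_p$ are nowhere colinear) and the inradius is bounded below uniformly on $\M$ by compactness. Once uniform convergence of the dividing curves is in hand, the area convergence is routine, and combined with the elementary incenter computation it finishes the proof.
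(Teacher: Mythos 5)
Your proposal is correct and follows essentially the same route as the paper's proof: identify the geodesic triangle with the flat triangle $\mathcal{T}_p(0,\e\a_p,-\e\c_p)\subset T_p\M$ via $\expNAB_p$, use that $\expNAB_p$ is a near-isometry (via Lemma~\ref{lemma:dexp}, $|d\expNAB_p-\id|=O(\e)$ uniformly in $p$ by compactness) to reduce to the constant-metric picture, and conclude with the incenter fact that the nearest-edge regions have areas proportional to the edge lengths $\e|\a_p|,\e|\b_p|,\e|\c_p|$. The paper's proof is simply terser---it leaves both the incenter computation and the stability of the nearest-edge partition under the $O(\e)$ perturbation implicit---so your additional detail (and the flagged uniformity issue) is a faithful elaboration of the same argument rather than a different one.
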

%%%%%%%%%

%%%%%%%
\begin{proof}
Let $x\in \mathcal{T}_\nabla(p,q,r)$. 
The triangle $\mathcal{T}_\nabla(p,q,r)$ is the image under $\expNAB_p$ of $\mathcal{T}_p(0,\e\a_p,\e \b_p)$. The functions $\rho^\a$, $\rho^\b$ and $\rho^\c$ evaluated at $p$ are the relative areas in $(\mathcal{T}_p(0,\e\a_p,\e \b_p),\g_p)$ of the regions that are closest to the edges parallel to $\a_p$, $\b_p$ and $\c_p$, respectively.
The statement follows from the fact that
the restriction $\expNAB_p: \mathcal{T}_p(0,\e\a_p,\e \b_p)\to \mathcal{T}_\nabla(p,q,r)$ satisfies $|d\expNAB_p- \id|<C\e$, where $C$ is independent of $p$.
\end{proof}
%%%%%%

%%%%%%%%%%
\begin{lemma}
We have the following uniform limit in $\M_\e$:
\[
\lim_{\e\to0} \frac{\nu}{\nu_\e} = 1.
\]
\end{lemma}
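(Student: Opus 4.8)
The plan is to read $\nu/\nu_\e$ as the comparison between the smooth function $\nu=|\a\wedge\b|=(\g_{\a\a}\g_{\b\b}-\g_{\a\b}^2)^{1/2}$ on $\M$ and its piecewise-constant sampling $\nu_\e$, which on each geodesic triangle equals the value of $\nu$ at that triangle's center of mass. Since the triangles shrink to points at rate $O(\e)$ and $\nu$ is uniformly continuous and bounded below, the ratio will tend to $1$ uniformly on $\M_\e$.

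First I would record two consequences of the compactness of $\M$ together with the smoothness of $\g$, $\nabla$, $\a$, $\b$: (i) $\nu$ is Lipschitz on $\M$ with respect to $d$, with some constant $L>0$; (ii) since $\a$ and $\b$ are nowhere co-linear, $\nu$ attains a positive minimum, $\nu\ge\nu_{\min}>0$. Both are immediate.

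Next I would bound the $\g$-diameter of each triangle $\mathcal{T}_\nabla(p,q,r)\in\mathcal{K}_\e$. By Corollary~\ref{cor:2.4}, such a triangle is $\{\expNAB_p(sv+tw)\ :\ 0\le s,t,\ s+t\le 1\}$, where $v,w$ are among $\pm\e\a_p,\pm\e\b_p,\pm\e\c_p$; hence $|sv+tw|\le\e(|\a_p|+|\b_p|+|\c_p|)\le C_0\,\e$, and in particular $sv+tw\in\scrO_p$ for $\e$ small. Proposition~\ref{prop:dist_estimate} then gives $d(p,x)\le C_0\,\e+C\,(C_0\e)^2\le C_1\,\e$ for every $x$ in the triangle, once $\e$ is small enough. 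So if $b$ denotes the center of mass of $T=\mathcal{T}_\nabla(p,q,r)$, then $d(x,b)\le 2C_1\,\e$ for all $x\in T$.

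Finally, for $x\in T$ one has $\nu_\e(x)=\nu(b)$ by definition, so the Lipschitz bound yields
\[
|\nu(x)-\nu_\e(x)|=|\nu(x)-\nu(b)|\le L\,d(x,b)\le 2LC_1\,\e ,
\]
and dividing by $\nu_\e(x)=\nu(b)\ge\nu_{\min}$ gives
\[
\left|\frac{\nu(x)}{\nu_\e(x)}-1\right|\le\frac{2LC_1}{\nu_{\min}}\,\e
\qquad\text{for all }x\in\M_\e,
\]
which is the desired uniform limit. I do not expect any genuine obstacle: this is a sampling-error estimate for a Lipschitz function on a mesh of size $O(\e)$, and the only nontrivial ingredient—the uniform $O(\e)$ bound on triangle diameters—is exactly Proposition~\ref{prop:dist_estimate} applied inside $\scrO_p$; one does not even need the precise barycenter, only that $\nu_\e$ equals $\nu$ at some point of the closed triangle.
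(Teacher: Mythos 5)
Your proposal is correct and is exactly the argument the paper intends: its proof is the one-line remark that the claim "follows from the smoothness of $\g$ and the compactness of $\M$," and your write-up simply spells out the standard details (uniform Lipschitz bound and positive lower bound for $\nu$, plus the $O(\e)$ bound on triangle diameters via Proposition~\ref{prop:dist_estimate}). No gaps; you have just made the paper's terse appeal to compactness explicit.
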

%%%%%%%%%

%%%%%%%
\begin{proof}
This follows from the smoothness of $\g$ and the compactness of $\M$.
\end{proof}
%%%%%%

%%%%%%%%%%%%%%%%%%%%%%%%%%%%%
\section{$\Gamma$-convergence}
\label{sec:Gamma}

In this section we prove our main result:

\begin{theorem}[$\Gamma$-convergence]
The sequence of discrete energies
\label{thm:Gamma_convergence}
\[
\mathcal{E}_\e : L^2(V_\e;\R^2) \to \R
\]
$\Gamma$-converge with respect to the $L^2$-norm (defined below) to the limit functional $\mathcal{F} : L^2(\M;\R^2)\to\R\cup\{\infty\}$ defined by
\begin{equation}
\mathcal{F}(F) = \Cases{
\int_\M QW(dF)\, \Volume & F\in W^{1,2}(\M;\R^2) \\
\infty & \text{otherwise},
}
\label{eq:calF}
\end{equation}
where $QW$ is the quasi-convex envelope of $W: T^*\M\otimes\R^2\to\R$, given by
\begin{equation}
W(A) = \rho^\a\, \Wb\brk{\frac{|A(\a)|}{|\a|}} + \rho^\b\, \Wb\brk{\frac{|A(\b)|}{|\b|}} + 
\rho^\c\, \Wb\brk{\frac{|A(\c)|}{|\c|}} + \Psi(\det A).
\label{eq:W}
\end{equation}
\end{theorem}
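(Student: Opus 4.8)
The plan is to establish the $\Gamma$-convergence in two halves, the $\Gamma$-$\liminf$ and the $\Gamma$-$\limsup$ inequalities, using the integral representation $\E_\e(f_\e) = \int_{\M_\e}\Wtot(dF_\e)\,\Volume$ derived above together with the asymptotic identifications $D_\e^\fraku\to|\fraku|$, $\rho_\e^\fraku\to\rho^\fraku$ and $\nu/\nu_\e\to1$ (Lemmas~\ref{lm:limit_distances}--\ref{lm:limit_relative_weight} and the following lemma). The key reduction is that $\Wtot$ converges uniformly, as $\e\to0$, to the fixed integrand $W$ of \eqref{eq:W}: indeed $|\Wbond(A)-\sum_\fraku \rho^\fraku\Wb(|A(\fraku)|/|\fraku|)|$ and $|\Wvol(A)-\Psi(\det A)|$ are controlled, via the Lipschitz bounds \eqref{eq:lipschitz_Wb} and \eqref{eq:lipschitz_Psi} and the growth bounds \eqref{eq:boundedness_Wb}, \eqref{eq:boundedness_Psi}, by $o(1)(1+|A|^2)$. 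Thus the problem is transformed into the classical relaxation/$\Gamma$-convergence theorem for (a sequence of perturbations of) a fixed integral functional on $W^{1,2}$ with superlinear growth, whose relaxation with respect to strong $L^2$ (equivalently weak $W^{1,2}$, thanks to coercivity) is $\int_\M QW(dF)\,\Volume$ — this is Acerbi--Fusco / Dacorogna-type quasiconvexification, which I would cite and adapt on the manifold $\M$ via local charts and a partition of unity.

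For the compactness and the $\liminf$ inequality I would proceed as follows. First, from coercivity \eqref{eq:coercive_Wb} one gets $\int_{\M_\e}|dF_\e|^2\,\Volume \le C(\E_\e(f_\e)+1)$ on the triangulated part, and then \eqref{eq:F_e_extension_to_boundary} transfers this bound to all of $\M$; together with a $\|F_\e\|_{L^2}$ bound this gives, along a subsequence, $F_\e\rightharpoonup F$ in $W^{1,2}(\M;\R^2)$ and $F_\e\to F$ strongly in $L^2$. The interpenetration term requires care: the coercivity \eqref{eq:coercive_Psi} in $\det A$ for $a<0$ only yields an $L^{1/2}$ bound on the negative part of $\det dF_\e$, which is exactly what is needed to show $\det dF\ge 0$ a.e.\ in the limit (so the limit is orientation-preserving) — this is where the volume term earns its keep, and I expect this to be the main technical obstacle. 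For the actual $\liminf$, write $\E_\e(f_\e)=\int_\M \Wtot(dF_\e)\,\Volume - \int_{\M\setminus\M_\e}\Wtot(dF_\e)\,\Volume$; the second integral is $o(1)$ by \eqref{eq:vol_delta_M_zero}, \eqref{eq:F_e_extension_to_boundary} and bounded growth, and for the first I replace $\Wtot$ by $W$ at cost $o(1)(1+\|dF_\e\|_{L^2}^2)=o(1)$, then invoke weak-$W^{1,2}$ lower semicontinuity of $F\mapsto\int_\M QW(dF)\,\Volume$ (Morrey) together with $QW\le W$.

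For the $\Gamma$-$\limsup$ (recovery sequence) I would fix $F\in W^{1,2}(\M;\R^2)$ and first reduce to $F$ smooth (or piecewise affine in $\nabla$-coordinates) by a density argument, using the continuity of $\mathcal{F}$ along such approximations — which itself follows from the growth bound on $QW$ and a diagonal argument. For smooth $F$ one must produce $f_\e\in L^2(V_\e;\R^2)$ with $\E_\e(f_\e)\to\int_\M QW(dF)\,\Volume$. The natural candidate is to take $f_\e$ to be the restriction to $V_\e$ of a suitable mollified/perturbed map: by the characterization of quasiconvexification there exist, on each small cell, affine maps oscillating at a scale $\ll\e^{-1}$... more precisely one uses the standard construction giving a sequence $G_\e$, piecewise affine on a fine grid, with $G_\e\to F$ in $L^2$ and $\int QW(dF)\ge \limsup\int W(dG_\e)$; sampling $G_\e$ on $V_\e$ (with the fine grid scale chosen as an intermediate scale between $\e$ and $0$) and extending by $\iota_\e$ recovers $dF_\e\approx dG_\e$ up to $L^2$-small errors, and then $\Wtot(dF_\e)\approx W(dG_\e)$ by uniform convergence of the integrands. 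The delicate point here is the interplay of the three scales ($\e$, the fine affine grid, and the mollification parameter) and ensuring the affine-extension operator $\iota_\e$ does not inflate the energy — again controlled by \eqref{eq:dF_in_triangles} and the uniform angle bounds of the triangulation; I would handle this with a careful diagonal extraction, and I regard it, together with the sign-of-determinant issue above, as the crux of the proof.
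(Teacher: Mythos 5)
Your $\liminf$ argument follows the paper's in all essentials (coercivity of $\Wtot$ gives weak $W^{1,2}$ compactness, \eqref{eq:WetoW}-type replacement of $\Wtot$ by $W$, then quasiconvexity lower semicontinuity with $QW\le W$). Your $\limsup$ is a genuinely different route: you propose to build recovery sequences attaining $\int_\M QW(dF)\,\Volume$ directly, via a relaxation sequence of oscillating piecewise-affine maps sampled on $V_\e$ and a three-scale diagonal argument. The paper deliberately avoids this machinery: it only constructs recovery sequences for the \emph{unrelaxed} density---interpolating $F$ itself on $V_\e$ (Proposition~\ref{prop:approx}) yields $I(F)\le\int_\M W(dF)\,\Volume$---and then uses that a $\Gamma$-limit is automatically $L^2$-lower semicontinuous, so $I$ is bounded by the lower-semicontinuous envelope of $F\mapsto\int_\M W(dF)\,\Volume$, which is identified with $\int_\M QW(dF)\,\Volume$ by \cite{LR95} (to pass from strong-$L^2$ to weak-$W^{1,2}$ relaxation) and Acerbi--Fusco \cite{AF84}; this is why the paper runs the whole proof on subsequential $\Gamma$-limits (compactness plus Urysohn). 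Your route can be made to work, but it carries exactly the multiscale bookkeeping the paper's abstract argument sidesteps.

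Three concrete problems in your write-up. (i) In the $\liminf$ you claim $\int_{\M\setminus\M_\e}\Wtot(dF_\e)\,\Volume=o(1)$ from \eqref{eq:vol_delta_M_zero}, \eqref{eq:F_e_extension_to_boundary} and quadratic growth; this does not follow: the growth bound leaves $\int_{\M\setminus\M_\e}|dF_\e|^2\,\Volume$, and $|dF_\e|^2$ is only bounded in $L^1(\M)$, not equi-integrable, so it may concentrate on the $O(\e)$-thin boundary layer. Since $\Wtot\ge0$, the correct (and the paper's) move is to bound below by $\int_{\tilde{\M}}$ for a fixed $\tilde{\M}$ with $\dist(\tilde{\M},\partial\M)>0$ and exhaust $\M$ at the end. (ii) Your scale ordering in the recovery construction is stated backwards: the grid on which the relaxing maps oscillate must be \emph{coarse} relative to the lattice (fix $G_k$ first, send $\e\to0$ so the lattice interpolant recovers $dG_k$, then diagonalize in $k$ using lower semicontinuity of the $\Gamma$-$\limsup$); a grid ``between $\e$ and $0$'', i.e.\ finer than the lattice, is invisible to $V_\e$, and the sampled energy would return roughly $\int_\M W(dF)\,\Volume$ rather than $\int_\M QW(dF)\,\Volume$. (iii) The assertion that one must show $\det dF\ge0$ a.e.\ in the limit is a misconception: the $\Gamma$-limit is finite on all of $W^{1,2}(\M;\R^2)$ and carries no orientation constraint (such a statement is false for general bounded-energy sequences, e.g.\ recovery sequences of an $F$ with $\det dF<0$ somewhere). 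The coercivity \eqref{eq:coercive_Psi} for negative arguments enters only pointwise, through Lemma~\ref{lem:tech_lemma_3} in Proposition~\ref{pn:properties_of_W}, to give $W(A)\ge\alW\,\dist^2(A,\SO{\g,\euc})$, which is what later yields Property \eqref{eq:energy_density_basic_assumption} for $QW$; it is not a constraint to be propagated to weak limits, and treating it as ``the main technical obstacle'' points the proof in the wrong direction.
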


In this setting, the quasi-convex envelope is defined fiberwise: for each $p\in \M$, $QW|_p$ is the largest quasiconvex function on $T_p^*\M\otimes\R^2$, smaller than $W|_p$;
see also Section~\ref{sec:prop_of_limit}.

We furthermore have the standard convergence of minimizers for equi-coercive $\Gamma$-converging functionals: 
\begin{proposition}[Convergence of minimizers]
\label{prop:compactness}
Let $f_{\e}\in L^2(V_{\e};\R^2)$, $\e\to0$,  be a sequence of approximate minimizers of $\mathcal{E}_{\e}$\, i.e., $\mathcal{E}_{\e}(f_{\e})-\inf \mathcal{E}_{\e}\to 0$.
Then, there exists a sequence $c_\e\in \R^2$, such that the sequence $\iota_{\e}(f_{\e})-c_\e$ is compact in $L^2(\M;\R^2)$; all its limit points are minimizers of $\mathcal{F}$.
Moreover
\[
\lim_{\e\to 0} \inf_{L^2_\e(\M_\e;\R^2)} \mathcal{E}_\e = \min_{L^2(\M;\R^2)} \mathcal{F}.
\]
\end{proposition}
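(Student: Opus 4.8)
The plan is to reduce the statement to the \emph{fundamental theorem of $\Gamma$-convergence} (see, e.g., \cite{Bra02}): given the $\Gamma$-convergence $\mathcal{E}_\e\to\mathcal{F}$ established in Theorem~\ref{thm:Gamma_convergence}, it suffices to prove that the family $\{\mathcal{E}_\e\}$ is equi-coercive \emph{modulo rigid translations}, i.e.\ that every sequence $f_\e$ with $\sup_\e \mathcal{E}_\e(f_\e)<\infty$ admits translations $c_\e\in\R^2$ for which $\{\iota_\e(f_\e)-c_\e\}$ is precompact in $L^2(\M;\R^2)$. Once this is available, the abstract theorem, applied on the quotient $L^2(\M;\R^2)/\R^2$ (equivalently, on the subspace of functions with zero mean), gives at once that the approximate minimizers, after subtracting suitable $c_\e$, are precompact, that all their limit points minimize $\mathcal{F}$, and that $\lim_{\e\to0}\inf\mathcal{E}_\e=\min\mathcal{F}$.

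First I would observe that $\inf\mathcal{E}_\e$ is uniformly bounded: since $\Wb,\Psi\ge0$ and $\Wb(0),\Psi(0)<\infty$ by \eqref{eq:boundedness_Wb}--\eqref{eq:boundedness_Psi}, a constant discrete configuration has energy at most $C\,\Vol_\g(\M)$ uniformly in $\e$. Hence any sequence of approximate minimizers satisfies $\sup_\e\mathcal{E}_\e(f_\e)<\infty$, and it is enough to establish the equi-coercivity above for such sequences. The coercivity estimate is the heart of the matter. Writing $F_\e=\iota_\e(f_\e)$ and using the integral representation \eqref{eq:Ebond_cont}--\eqref{eq:Weps} together with the coercivity \eqref{eq:coercive_Wb} of $\Wb$,
\[
\mathcal{E}_\e(f_\e)\ \ge\ \Ebond(f_\e)\ \ge\ \alPhi\sum_{\fraku=\a,\b,\c}\int_{\M_\e}\rho_\e^{\fraku}\left(\frac{|dF_\e(\fraku)|}{D_\e^{\fraku}}-1\right)^{2}\Volume .
\]
By Lemmas~\ref{lm:limit_distances} and \ref{lm:limit_relative_weight} there is $c_0>0$ with $D_\e^{\fraku}\ge c_0$ and $\rho_\e^{\fraku}\ge c_0$ uniformly on $\M_\e$ for small $\e$; combined with the elementary bound $t^2\le 2(t-1)^2+2$ this yields $\int_{\M_\e}|dF_\e(\fraku)|^2\,\Volume\le C\bigl(1+\mathcal{E}_\e(f_\e)\bigr)$ for each $\fraku$. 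Since $\{\a,\b\}$ is a frame and $\nu=|\a\wedge\b|$ is bounded below on the compact $\M$, we have $|dF_\e|^2\le C\bigl(|dF_\e(\a)|^2+|dF_\e(\b)|^2\bigr)$ pointwise with $C$ independent of $\e$, so $\|dF_\e\|_{L^2(\M_\e)}$ is bounded; the extension bound \eqref{eq:F_e_extension_to_boundary} then controls $\|dF_\e\|_{L^2(\M)}$.

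Taking $c_\e=\frac{1}{\Vol_\g(\M)}\int_\M F_\e\,\Volume$, the Poincar\'e--Wirtinger inequality on the compact Lipschitz domain $\M$ bounds $\|F_\e-c_\e\|_{L^2(\M)}$ by $C\|dF_\e\|_{L^2(\M)}$, so $\{F_\e-c_\e\}$ is bounded in $W^{1,2}(\M;\R^2)$ and hence precompact in $L^2(\M;\R^2)$ by Rellich--Kondrachov; this is exactly the equi-coercivity required. That $\mathcal{F}$ actually attains its minimum, which is needed to write $\min\mathcal{F}$, also follows along the way: the convex (hence quasiconvex) function $A\mapsto c|A|^2-C$ lies below $W$, hence below $QW$, so $\mathcal{F}$ is coercive modulo constants and, by quasiconvexity and Morrey's theorem, weakly lower semicontinuous on $W^{1,2}$.

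The main difficulty is bookkeeping rather than hard analysis: the functionals are invariant under $f_\e\mapsto f_\e+c$, so they are \emph{not} coercive on $L^2(\M;\R^2)$ itself, and the abstract convergence-of-minimizers result must be invoked on the quotient by constants, threading the normalization $c_\e$ consistently through the $\Gamma$-$\liminf$ and $\Gamma$-$\limsup$ inequalities of Theorem~\ref{thm:Gamma_convergence}. The other point to watch is that the constants $c_0$ and $C$ in the coercivity estimate are genuinely $\e$-independent, which is precisely what Lemmas~\ref{lm:limit_distances}--\ref{lm:limit_relative_weight}, the uniform lower bound on $\nu$, and the compactness of $\M$ supply.
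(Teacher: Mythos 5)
Your argument is correct, and its overall skeleton (normalize by the mean, get an $\e$-uniform $L^2$ bound on $dF_\e$ from the energy, Poincar\'e plus Rellich for compactness, then transfer optimality through the $\Gamma$-liminf/limsup inequalities) is the same as the paper's. The differences are in how the two ingredients are supplied. For coercivity, the paper simply reuses its earlier computation \eqref{eq:coercivity_implies_boundedness}, which rests on the uniform lower bound $\Wtot(A)\ge \alW\dist^2(A,\SO{\g,\euc})-h(\e)(1+|A|^2)$ from Proposition~\ref{pn:properties_of_W} (and hence on the rigidity-type Lemma~\ref{lem:tech_lemma_new}); you instead extract the bound directly from the bond term alone, via \eqref{eq:coercive_Wb}, the uniform lower bounds on $\rho_\e^{\fraku}$, $D_\e^{\fraku}$ and the nondegeneracy of the frame $\{\a,\b\}$ — a more elementary route that does not need the $\SO{\g,\euc}$-distance estimate at all, at the price of redoing a small computation the paper already has on hand. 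For the conclusion, the paper argues by hand: it notes translation invariance to assume $c_\e=0$, passes to a subsequence along which $\inf\mathcal{E}_\e$ converges, and tests the $\Gamma$-liminf inequality against the weak $W^{1,2}$ limit and the $\Gamma$-limsup against recovery sequences of arbitrary $\tilde F$, which simultaneously yields attainment of $\min\mathcal{F}$; you invoke the abstract fundamental theorem of $\Gamma$-convergence on the zero-mean subspace (equivalently the quotient by constants). That is legitimate, but it carries a small extra obligation you only gesture at: you must check that the $\Gamma$-convergence of Theorem~\ref{thm:Gamma_convergence} descends to that subspace, i.e.\ that recovery sequences can be re-centred (this follows from translation invariance of $I_\e$ together with the fact that the means of an $L^2$-convergent recovery sequence converge to the mean of the limit), whereas the paper's hands-on argument avoids this point entirely since $I_\e(F_\e-c_\e)=I_\e(F_\e)$. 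Your separate direct-method proof that $\mathcal{F}$ attains its minimum is sound (the quadratic lower bound passes to $QW$ since it is convex) but redundant, as attainment already comes out of the minimizing-sequence argument.
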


In order to characterize the convergence of functionals, we first need to specify a topology for its domain. The appropriate topology in the present case is $L^2(\M;\R^2)$, where $L^2(V_\e;\R^2)$ is embedded in $L^2(\M;\R^2)$ via the extension map $\iota_\e$. The first step is to extend $\mathcal{E}_\e: L^2(V_\e;\R^2) \to \R$ into a functional $I_\e : L^2(\M;\R^2)\to\R\cup\{\infty\}$ defined as follows,
\[
I_\e(F) = \Cases{\mathcal{E}_\e(\iota_\e^{-1}(F)) & F\in L^2_\e(\M;\R^2) \\
\infty & \text{otherwise,}
}
\]
where $L^2_\e(\M;\R^2)$ is the image of $L^2(V_\e;\R^2)$ under the extension map $\iota_\e$.

By the sequential compactness property for separable spaces \cite[Theorem~8.5]{Dal93}, for every sequence $\e_n\to0$, the sequence of functionals $I_{\e_n}$ has a $\Gamma$-converging subsequence. By the Urysohn property \cite[Proposition~8.3]{Dal93}, if there exists a functional $\mathcal{F}:L^2(\M;\R^2) \to\R\cup\{\infty\}$, which is the $\Gamma$-limit of every converging subsequence, then $\mathcal{F}$ is the $\Gamma$-limit of $I_\e$ as $\e\to0$. 

We will prove that $\mathcal{F}$ given by \eqref{eq:calF} is the $\Gamma$-limit of every $\Gamma$-converging subsequence, hence the $\Gamma$-limit of $I_\e$. Denote by $I$ the limit of a (not relabeled) subsequence $I_\e$ (we also omit the index $n$ for ease of notation). We will prove that $I = \mathcal{F}$ by showing first that $I\le \mathcal{F}$ and then that $I\ge \mathcal{F}$;  we will treat separately the case where $\mathcal{F}$ assumes an infinite value, in which case it suffices to show that $I\ge\mathcal{F}$.

%%%%%%%%%%%%%%%%%%%%%%
\subsection{Piecewise-affine approximation}

We start by showing that every function in $W^{1,2}(\M;\R^2)$ can be approximated by functions in $L^2_\e(\M;\R^2)$. This property is necessary in order to construct recovery sequences.

%%%%%%%%
\begin{proposition}
\label{prop:approx}
Let $F\in W^{1,2}(\M;\R^2)$. Then, there exists a sequence of functions $F_\e\in L^2_\e(\M;\R^2)$, with $\e\to0$, such that
\[
F_\e \to F
\qquad \text{in $W^{1,2}(\M;\R^2)$}.
\]
\end{proposition}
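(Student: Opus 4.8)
The goal is a density statement: every $F \in W^{1,2}(\M;\R^2)$ is a $W^{1,2}$-limit of piecewise-affine-extended discrete configurations $F_\e \in L^2_\e(\M;\R^2)$. The natural strategy is a two-step approximation. First I would reduce to the case of a smooth (or at least $C^1$) target: since $C^\infty(\M;\R^2)$ (or $C^\infty(\tilde\M)|_\M$, using the slightly enlarged manifold $\tilde\M$ from the construction of $\iota_\e$) is dense in $W^{1,2}(\M;\R^2)$, it suffices by a diagonal argument to approximate a fixed smooth $G$. So fix $G \in C^\infty(\tilde\M;\R^2)$ and set $F_\e := \iota_\e(G|_{V_\e})$, the piecewise-affine extension of the restriction of $G$ to the vertices of the triangulation.

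\textbf{Key steps.} The heart of the matter is the estimate $\|F_\e - G\|_{W^{1,2}(\M)} \to 0$. I would argue triangle by triangle. On a single geodesic triangle $\mathcal{T}_\nabla(p,q,r) \in \mathcal{K}_\e^\pm$, the function $G$ is smooth with bounded first and second derivatives (uniformly in the triangle, by compactness of $\M$ and smoothness of $G$ and $\nabla$), and the triangle has diameter $O(\e)$. The $C^0$ bound $\|F_\e - G\|_{L^\infty(\mathcal{T})} = O(\e)$ is immediate since $F_\e$ interpolates $G$ at the vertices and $G$ varies by $O(\e)$ across the triangle. For the gradient, I would use Proposition~\ref{prop:dF2}: within the triangle, $dF_\e(\Pi_p^x \e\a) = G(q) - G(p)$, while $dG_x(\Pi_p^x \e\a) = G(q) - G(p) + O(\e^2)$ by Taylor's theorem along the $\nabla$-geodesic from $p$ to $q$ (whose tangent is $\Pi_p^\cdot \e\a$, with geodesic curvature bounded as in Proposition~\ref{prop:dist_estimate}); likewise for the $\b$ and $\c$ directions. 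Since $\{\a,\b\}$ is a frame field with angle bounded away from degeneracy uniformly on $\M$, this gives $\|dF_\e - dG\|_{L^\infty(\mathcal{T})} = O(\e)$, with the constant independent of the triangle. Summing the squared $L^\infty$-bounds over the $O(\e^{-2})$ triangles weighted by their areas $O(\e^2)$, which total $\Vol_\g(\M_\e) \le \Vol_\g(\M)$, yields $\|F_\e - G\|_{W^{1,2}(\M_\e)}^2 = O(\e^2)$. On the leftover region $\M \setminus \M_\e$, I would control $\|dF_\e\|_{L^2(\M\setminus\M_\e)}$ via the extension estimate \eqref{eq:F_e_extension_to_boundary} together with $\Vol_\g(\M\setminus\M_\e) \le C\e$ from \eqref{eq:vol_delta_M_zero}, and control $\|dG\|_{L^2(\M\setminus\M_\e)}^2 = O(\e)$ similarly; the $L^2$-difference of the functions themselves on this strip is handled the same way. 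Hence $\|F_\e - G\|_{W^{1,2}(\M)} \to 0$.

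\textbf{Main obstacle.} The routine $C^0$ and area-counting parts are straightforward; the step requiring genuine care is the uniform $O(\e)$ gradient estimate on each triangle, specifically matching the affine interpolant's exact directional derivatives (Proposition~\ref{prop:dF2}) against $dG$ along the curved $\nabla$-geodesic edges. One must verify that the second-order Taylor remainder along a $\nabla$-geodesic is genuinely $O(\e^2)$ with a constant uniform in base point and direction — this uses the smoothness of $\nabla$ and compactness of $\M$ (as in Proposition~\ref{prop:dist_estimate}), plus the fact that the frame $\{\a,\b,\c\}$ is parallel so the edge directions at $x$ are exactly $\Pi_p^x(\e\a)$ etc., not merely approximately so. A secondary technical point is the diagonalization: combining the density of smooth functions in $W^{1,2}$ with the $\e$-indexed approximation requires choosing, for a given $G_k \to F$, a sequence $\e_k \to 0$ slowly enough that $\|F_{\e_k}^{(k)} - G_k\|_{W^{1,2}} \to 0$; this is standard but should be stated. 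I expect the whole argument to be short modulo these uniformity checks.
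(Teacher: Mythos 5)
Your proposal is correct and follows essentially the same route as the paper: reduce by density to a smooth target, interpolate it at the vertices, get an $O(\e)$ bound on $|dF_\e - dG|$ in each triangle by Taylor expansion along the geodesic edges combined with the uniform non-degeneracy of the frame $\{\a,\b,\c\}$, and handle $\M\setminus\M_\e$ via \eqref{eq:F_e_extension_to_boundary} and \eqref{eq:vol_delta_M_zero}. The only cosmetic difference is that the paper compares the differentials at the vertex $p$ and then uses the Lipschitz continuity of $dG(\a)$, $dG(\c)$ together with the constancy of $dF_\e(\a)$, $dF_\e(\c)$ on each triangle, whereas you compare directly at an arbitrary point $x$ via Proposition~\ref{prop:dF2}; these are equivalent.
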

%%%%%%%

%%%%%%%
\begin{proof}
First, it suffices to prove the claim for real-valued functions. 
Second, by a standard density argument, it suffices to prove the claim for $C^\infty(\M)$ functions. 

So let $F\in C^\infty(\M)$
and set $F_\e = \iota_\e(F|_{V_\e}) \in L^2_\e(\M)$. We will prove that
\[
F_\e \to F
\qquad \text{in $W^{1,2}(\M)$}.
\]
First, by the construction of the triangulations there exists 
a constant $C>0$ such that the vertices $V_\e$ form a $C\e$-dense subset of $\M$. 
Therefore, $F_\e=F$ over a $C\e$-dense set.
Second, denote by $L$ the Lipschitz constant of $F$; by \eqref{eq:F_e_extension_to_boundary}, the Lipschitz constant of $F_\e$ is bounded by $\Cext L$, hence $\|F_\e-F\|_\infty < (L+\Cext L) C \e$.

Next, we show that 
\[
\|dF_\e-dF\|_{L^\infty(\mathcal{T}_\nabla(p,q,r))}=O(\e)
\]
in each triangle $\mathcal{T}_\nabla(p,q,r)$.
To this end,
we start by comparing $dF_p$ and $(dF_\e)_p$. 
By \eqref{eq:dF_in_triangles},
\[
(dF_\e)_p(\e\a) = F(q)-F(p) \Textand (dF_\e)_p(\e\c)=F(p)-F(r).
\]
By Taylor's expansion,
\[
F(q) = F(p) + dF_p(\e\a) + O(\e^2), \qquad F(r) = F(p) - dF_p(\e\c) + O(\e^2).
\]
It follows that
\[
|dF_p(\a) - (dF_\e)_p(\a)|= O(\e) \Textand |dF_p(\c) - (dF_\e)_p(\c)| = O(\e).
\]

We then note that $dF(\a), dF(\c)$ are Lipschitz maps (since $F$ is smooth), and by \eqref{eq:dF_in_triangles}, $dF_\e(\a)$ and $dF_\e(\c)$ are constants in $\mathcal{T}_\nabla(p,q,r)$. 
It follows that that for any $x\in \mathcal{T}_\nabla(p,q,r)$ 
\[
|dF_x(\a) - (dF_\e)_x(\a)| \le |dF_x(\a) - dF_p(\a)| + |dF_p(\a) - (dF_\e)_p(\a)| = O(\e),
\]
and similarly for $\a$ replaced by $\c$.
Since, by the compactness of $\M$, the angle between $\a$ and $\c$ is uniformly bounded away from $0$ and $\pi$, and since $|\a|,|\c|$ are uniformly bounded away from zero, we obtain that $|dF-dF_\e| = O(\e)$ in $\M_\e$.

Finally, since by \eqref{eq:vol_delta_M_zero} $\Vol_\g(\M\setminus\M_\e)=O(\e)$ , and since $F_\e$ are uniformly Lipschitz, it follows that $\|dF_\e-dF\|_{L^p(\M)}=O(\e)$ for every $p<\infty$, which completes the proof.
\end{proof}
%%%%%%

%%%%%%%%%%%%%%%%%%%%%%
\subsection{Properties of $\Wtot$ and $W$}
We proceed to establish some properties of the energy densities $\Wtot$ and $W$, which are required in the subsequent analysis; these follow from the assumed properties of the discrete bond and volumetric energy functions $\Wb$ and $\Psi$.

The metrics $\g$ and $\euc$ induce the standard Frobenius metric on the vector bundle $T^*\M\otimes\R^2$. We denote by $\O{\g,\euc}$ the sub-bundle of $T^*\M\otimes\R^2$  of isometries $(T\M,\g)\to(\R^2,\euc)$; we  denote by $\SO{\g,\euc}$ the sub-bundle of orientation-preserving isometries.

%%%%%%%%%%
\begin{proposition}
\label{pn:properties_of_W}
The functions $\Wtot,W:T^*\M\otimes\R^2\to \R$ given by \eqref{eq:Wtot} and \eqref{eq:W} satisfy the following properties:
\begin{enumerate}
\setlength{\itemsep}{0pt}
\item $W(A)=0$ if and only if $A\in\SO{\g,\euc}$.
\item Coercivity of $W$: there exists a constant $\alW>0$ such that 
\begin{equation}
\begin{aligned}
W(A) &\ge \alW\,\dist^2(A,\SO{\g,\euc}),
\end{aligned}
\label{eq:coercivW}
\end{equation}
where the distance is with respect to the norm induced by $\g$ and $\euc$.

\item Bounded growth of $W$: there exists a constant $\CW>0$, such that 
\begin{equation}
W(A) \le \CW(1 + |A|^2).
\label{eq:bddW}
\end{equation}

\item Uniform coercivity of $\Wtot$: there exists a function $h:(0,1)\to\R$ satisfying $h(\e)\to0$ as $\e\to0$, such that 
\begin{equation}
\begin{aligned}
\Wtot(A) &\ge \alW\,\dist^2(A,\SO{\g,\euc}) - h(\e) \,(1+ |A|^2).
\end{aligned}
\label{eq:unif_coerciv}
\end{equation}

\item Uniformly bounded growth of $\Wtot$: 
\begin{equation}
\Wtot(A) \le \CW(1 + |A|^2).
\label{eq:unif_bdd}
\end{equation}

\item Let $F_\e$ be a family of functions uniformly bounded in $W^{1,2}(\M;\R^2)$. Then,
\begin{equation}
\lim_{\e\to0} \int_{\M_\e} |\Wtot(dF_\e) - W(dF_\e)|\,\Volume = 0.
\label{eq:WetoW}
\end{equation}
\end{enumerate}
\end{proposition}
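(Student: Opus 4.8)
The six assertions divide into an algebraic part, a coercivity part that is the crux, a pair of bounded-growth parts that are essentially immediate, and a pair of $\e$-comparison parts. For (1), all four summands of $W$ are non-negative, so $W(A)=0$ iff $|A(\fraku)|=|\fraku|$ for $\fraku=\a,\b,\c$ and $\det A=1$; since $\c=-(\a+\b)$, expanding $|A(\a)+A(\b)|^2=|A(\c)|^2=|\c|^2=|\a+\b|^2$ and using the three length identities yields $\langle A(\a),A(\b)\rangle=\g(\a,\b)$, so $A$ preserves $\g_{\a\a},\g_{\b\b},\g_{\a\b}$, hence $A\in\O{\g,\euc}$, and $\det A=1$ upgrades this to $A\in\SO{\g,\euc}$; the converse is clear. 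Parts (3) and (5) follow directly from \eqref{eq:boundedness_Wb}, \eqref{eq:boundedness_Psi} together with $|\det A|\le\tfrac12|A|^2$, the uniform (for $\e$ small) lower bounds $D_\e^\fraku\ge c>0$, $\nu_\e\ge c>0$, and $\rho_\e^\fraku\le1$; I would simply take $\CW$ to be a common constant.

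The heart of the matter is the coercivity (2), and I would prove the fiberwise estimate $W|_p(A)\ge\alW\dist^2(A,\SO{\g_p,\euc})$ with $\alW$ independent of $p$ (uniformity coming from compactness of $\M$), splitting into three regimes. (a) \emph{Near $\SO$}: pick $R\in\SO{\g_p,\euc}$ realizing the distance and set $\tilde A:=R^{-1}A:(T_p\M,\g_p)\to(T_p\M,\g_p)$, so $|A(\fraku)|=|\tilde A(\fraku)|_\g$, $\det A=\det\tilde A$, and $\dist(A,\SO{\g_p,\euc})=\dist(\tilde A,\SO{\g_p})$; writing $E:=\tilde A^{*}\tilde A-\id$ (symmetric, small when $A$ is near $\SO$) gives $|\tilde A(\fraku)|^2/|\fraku|^2-1=\g(E\fraku,\fraku)/|\fraku|^2$, and combining $\Wb(r)\ge\alPhi(r-1)^2\ge\tfrac{4}{25}\alPhi(r^2-1)^2$ (for $r\le\tfrac32$) with the linear-algebra fact that $E\mapsto(\g(E\fraku,\fraku)/|\fraku|^2)_{\fraku=\a,\b,\c}$ is \emph{injective} on $\g_p$-symmetric endomorphisms — precisely where $\a+\b+\c=0$ enters — yields $W|_p(A)\ge c\,|E|^2=c\,|\tilde A^{*}\tilde A-\id|^2\ge c\,\dist^2(\tilde A,\SO{\g_p})$, the last step a singular-value computation. (b) \emph{$|A|\le R$ and $\dist(A,\SO{\g_p,\euc})\ge\eta$}: the set $\{(p,A):|A|\le R,\ \dist\ge\eta\}$ is compact, $W$ is continuous and, by part (1), strictly positive on it, and $\dist^2$ is bounded there, so $W/\dist^2$ is bounded below. (c) \emph{$|A|$ large}: $\dist^2(A,\SO{\g_p,\euc})\le C(1+|A|^2)$ since $\SO{\g_p,\euc}$ is uniformly bounded, while $\Wb(r)\ge\alPhi(r-1)^2$ and $|A(\a)|^2+|A(\b)|^2\ge c|A|^2$ (the frame $\{\a,\b\}$ is uniformly nondegenerate) give $W|_p(A)\ge c'|A|^2-C'$, hence $W\ge\alW\dist^2$ for $|A|$ large. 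Choosing $\eta$ so (a) covers $\{\dist\le\eta\}$ and $R$ so (c) covers $\{|A|>R\}$, the three regimes exhaust $T^*\M\otimes\R^2$ and $\alW$ is the minimum of the three constants. This neighborhood-of-$\SO$ estimate is the main obstacle: the volumetric term is not coercive when $\det A$ is near $1$, so the bond energy alone must control the full quadratic distance to $\SO$, which rests on the $\a+\b+\c=0$ identity and on keeping all constants uniform over $\M$.

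Parts (4) and (6) rest on a single comparison estimate. Using the Lipschitz bounds \eqref{eq:lipschitz_Wb}, \eqref{eq:lipschitz_Psi}, the growth bounds, and the uniform limits $D_\e^\fraku\to|\fraku|$, $\rho_\e^\fraku\to\rho^\fraku$, $\nu/\nu_\e\to1$ from the three preceding lemmas, a term-by-term estimate gives $|\Wtot(A)-W(A)|\le h(\e)\,(1+|A|^2)$ with $h(\e)\to0$, uniformly in $p$. Then (6) is immediate: $\int_{\M_\e}|\Wtot(dF_\e)-W(dF_\e)|\,\Volume\le h(\e)\,\brk{\Vol_\g(\M)+\|dF_\e\|_{L^2(\M)}^2}\to0$ by the assumed $W^{1,2}$-bound; and (4) follows by combining this estimate with (2): $\Wtot(A)\ge W(A)-h(\e)(1+|A|^2)\ge\alW\dist^2(A,\SO{\g,\euc})-h(\e)(1+|A|^2)$.
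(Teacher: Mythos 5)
Your proposal is correct, and for items (1), (3)--(6) it follows the paper's proof almost verbatim: (1) via the polarization argument (the paper's Lemma~\ref{lem:3indep}), (3) and (5) directly from the growth assumptions, and (4), (6) from the single comparison estimate $|\Wtot(A)-W(A)|\le h(\e)(1+|A|^2)$, which is exactly the paper's \eqref{eq:Wtot-W}. The genuine difference is in the coercivity (2). The paper first bounds the bond part below by $c\,\dist^2(A,\O{\g,\euc})$ using Lemma~\ref{lem:tech_lemma_new}, and then passes from $\O{\g,\euc}$ to $\SO{\g,\euc}$ via Lemma~\ref{lem:tech_lemma_3}, which is precisely where the quantitative coercivity \eqref{eq:coercive_Psi} of $\Psi$ at negative determinants is consumed. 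You instead estimate $\dist^2(A,\SO{\g,\euc})$ directly by a three-regime argument: a linearization near $\SO{\g,\euc}$ (where the injectivity of $E\mapsto(\g(E\fraku,\fraku)/|\fraku|^2)_{\fraku=\a,\b,\c}$ on symmetric endomorphisms plays the same role as the near-identity Taylor step inside the paper's proof of Lemma~\ref{lem:tech_lemma_new}), a compact intermediate regime handled by continuity, compactness and strict positivity of $W$ off $\SO{\g,\euc}$ (item (1)), and a large-$|A|$ regime handled by the bond growth \eqref{eq:coercive_Wb}. What your route buys is that \eqref{eq:coercive_Psi} is never used: only $\Psi\ge0$ with zero set $\{1\}$ enters (through item (1) in the middle regime, which is what excludes orientation-reversing isometries), so the pointwise bound \eqref{eq:coercivW} holds under weaker hypotheses on $\Psi$; the price is a non-explicit constant coming from two compactness arguments, whereas the paper's factorization through $\dist(\cdot,\O{\g,\euc})$ and Lemma~\ref{lem:tech_lemma_3} is more modular and makes the role of the volumetric penalty quantitative. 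Two small points you leave implicit in regime (a), both easily supplied: $\eta$ must be taken small enough that $\det A>0$ (so that $|\tilde A^*\tilde A-\id|\ge\dist(\tilde A,\SO{\g_p})$ is indeed a valid singular-value inequality) and that $|A(\fraku)|/|\fraku|\le 3/2$ (so the comparison $(r-1)^2\ge\tfrac{4}{25}(r^2-1)^2$ applies); neither is a genuine gap.
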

%%%%%%%%%

%%%%%%%
\begin{proof}\
\begin{enumerate}
\setlength{\itemsep}{0pt}
\item
By the properties of $\Wb$ and $\Psi$, $W(A)$ vanishes if and only if
\[
|A(\a)| = |\a|, 
\qquad
|A(\b)| = |\b|,
\qquad
|A(\c)| = |\c|, 
\]
and
\[
\det A=1.
\]
Since $\a/|\a|$, $\b/|\b|$ and  $\c/|\c|$ are unit-vector fields, it follows that $A$ preserves the lengths of two independent vectors and their sum. It follows from elementary linear algebra that $A$ is a (local) isometry (see Lemma~\ref{lem:3indep}), i.e., $A\in\O{\g,\euc}$. The positivity of the determinant implies that $A\in\SO{\g,\euc}$.

\item
By the coercivity \eqref{eq:coercive_Wb} of $\Wb$ and the coercivity \eqref{eq:coercive_Psi} of $\Psi$,
\[
\begin{split}
W(A) &\ge \alPhi \min(\rho^\a,\rho^\b,\rho^\c)\, \sum_{\fraku = \a,\b,\c} \brk{\Abs{A\brk{\frac{\fraku}{|\fraku|}}}-1}^2 
+ \alPsi|\det A|^{1/2}\, \ind_{\det A<0}.
\end{split}
\]
Since, from the compactness of $\M$, the angle between $\a$ and $\b$ is bounded away from zero and $\pi$, and the ratio between their lengths is bounded and bounded away from $0$, it follows from Lemma~\ref{lem:tech_lemma_new} that there exists a constant $c>0$, such that
\[
\sum_{\fraku = \a,\b,\c} \brk{\Abs{A\brk{\frac{\fraku}{|\fraku|}}}-1}^2 \ge c \dist^2(A,\O{\g,\euc}.
\]
hence
\[
W(A) \ge \alPhi c \min(\rho^\a,\rho^\b,\rho^\c) \dist^2(A,\O{\g,\euc}) + \alPsi |\det A|^{1/2}\, \ind_{\det A<0},
\]
Finally, using Lemma~\ref{lem:tech_lemma_3}, we deduce that there exists a constant $\alW>0$, such that
\[
W(A) \ge \alW\,\dist^2(A,\SO{\g,\euc}).
\]

\item
By the boundedness \eqref{eq:boundedness_Wb}, \eqref{eq:boundedness_Psi} of $\Wb$ and $\Psi$,
\[
\begin{split}
W(A) &\le \CPhi \brk{
\rho^\a  \frac{|A(\a)|^2}{|\a|^2}  +
\rho^\b  \frac{|A(\b)|^2}{|\b|^2} 
+ \rho^\c  \frac{|A(\c)|^2}{|\c|^2} + 1}  + \CPsi(1 + |\det A |) \\
&\le (\CPhi+\CPsi) \,\brk{|A|^2 + 1} .
\end{split}
\]
where we used the fact that $\rho^\a+ \rho^\b+ \rho^\c =1$, the fact that the operator norm is bounded from above by the Frobenius norm, and the inequality,
\begin{equation}
|\det A|  \le |A|^2 ,
\label{eq:det_ineq}
\end{equation}
which holds in two dimensions.

\item
Combining \eqref{eq:Weps} and \eqref{eq:Wvol},
\[
\begin{split}
\Wtot(A) &= W(A) + \sum_{\fraku = \a,\b,\c}  
\brk{\rho_\e^{\fraku} \, \Wb\brk{\frac{|A(\fraku)|}{D_\e^{\fraku}}} - \rho^{\fraku} \, \Wb\brk{\frac{|A(\fraku)|}{D^{\fraku}}}} \\
&\quad +
\brk{\Psi\brk{\frac{\nu}{\nu_\e}  \det A} - \Psi\brk{\det A}}
\end{split}
\]
By the boundedness properties \eqref{eq:boundedness_Wb} of $\Wb$, the Lipschitz  continuity \eqref{eq:lipschitz_Wb} of $\Wb$ and the Lipschitz  continuity \eqref{eq:lipschitz_Psi} of $\Psi$,
\[
\begin{split}
|\Wtot(A) - W(A)| &\le 
\CW  \sum_{\fraku=\a,\b,\c} |\rho_\e^\fraku - \rho^\fraku| \brk{1 + \frac{|A(\fraku)|^2}{(D^\fraku)^2}} \\
&\quad+ \LPhi \sum_{\fraku = \a,\b,\c} \rho_\e^{\fraku} \, 
\brk{1 + \frac{|A(\fraku)|}{D_\e^{\fraku}} + \frac{|A(\fraku)|}{D^{\fraku}}}\Abs{\frac{1}{D_\e^{\fraku}} - \frac{1}{D^{\fraku}}}|A(\fraku)| \\
&\quad +
\LPsi \Abs{\frac{\nu}{\nu_\e}   - 1} |\det A|.
\end{split}
\]

The three terms of the right-hand side can be bounded by using the fact that $|\det A| \le |A|^2$ and that $\rho_\e^\fraku - \rho^\fraku$, $1/D^\fraku - 1/D_\e^{\fraku}$ and $\nu/\nu_\e-1$ converge to $0$ uniformly, yielding
\begin{equation}
|\Wtot(A) - W(A)| \le  h(\e)\, (1+|A|^2),
\label{eq:Wtot-W}
\end{equation}
where $h(\e)$ tends to zero as $\e\to0$. In particular,
\[
\Wtot(A) \ge \alW\,\dist^2(A,\SO{\g,\euc}) - h(\e)\, (1+|A|^2),
\]

\item
The uniform boundedness property of $\Wtot$ follows from the bound \eqref{eq:Wtot-W} on $|\Wtot(A) - W(A)|$ and the boundedness \eqref{eq:bddW} of $W$, possibly having to enlarge $\CW$.

\item 
We have
\[
|\Wtot(dF_\e) - W(dF_\e) |\le  h(\e)\, (1+|dF_\e|^2),
\]
and limit \eqref{eq:WetoW} follows from $h(\e)\to0$ and the uniform 
boundedness of $dF_\e$ in $L^2(\M;\R^2)$.
\end{enumerate}
\end{proof}
%%%%%%

%%%%%%%%%%%%%%%%%%%%%%
\subsection{Proof of Theorem~\ref{thm:Gamma_convergence}}

%%%%%%%%%%%%%%%%%%%%%%
%\subsection{Infinite case}

%%%%%%%%%%
\begin{proposition}[Infinite case]
Let $I:L^2(\M;\R^2)\to\R\cup\{\infty\}$ be a $\Gamma$-limit of a sequence $I_\e$, $\e\to0$. If
$F\in L^2(\M;\R^2)\setminus W^{1,2}(\M;\R^2)$ then
\[
I(F) = \infty = \mathcal{F}(F).
\]
\end{proposition}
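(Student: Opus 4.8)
The plan is to prove the contrapositive of the lower-bound (liminf) inequality, combined with the coercivity of $\Wtot$. Suppose $F\in L^2(\M;\R^2)$ and $I(F)<\infty$; I want to deduce $F\in W^{1,2}(\M;\R^2)$. By the definition of the $\Gamma$-liminf inequality, there exists a sequence $F_\e\to F$ in $L^2(\M;\R^2)$ with $\liminf_{\e\to0} I_\e(F_\e) = I(F) < \infty$. Passing to a subsequence along which the liminf is attained and is finite, we may assume $\sup_\e I_\e(F_\e) = \sup_\e \E_\e(\iota_\e^{-1}(F_\e)) \le M < \infty$; in particular each $F_\e$ lies in $L^2_\e(\M;\R^2)$, so $F_\e = \iota_\e(f_\e)$ is piecewise-affine and $F_\e\in W^{1,\infty}(\M;\R^2)\subset W^{1,2}(\M;\R^2)$.

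The key step is to extract from the energy bound a uniform $W^{1,2}$ bound on $F_\e$. Using the integral representation $\E_\e(f_\e) = \int_{\M_\e}\Wtot(dF_\e)\,\Volume$ and the uniform coercivity \eqref{eq:unif_coerciv} of $\Wtot$ from Proposition~\ref{pn:properties_of_W}, we get
\[
M \ge \int_{\M_\e} \Wtot(dF_\e)\,\Volume \ge \alW \int_{\M_\e} \dist^2(dF_\e,\SO{\g,\euc})\,\Volume - h(\e)\int_{\M_\e}(1+|dF_\e|^2)\,\Volume.
\]
Since $\SO{\g,\euc}$ is a bounded sub-bundle (its fibers are isometries, bounded in Frobenius norm by a constant depending only on $\g$ and the compactness of $\M$), there is a constant $c_0$ with $|dF_\e|^2 \le 2\dist^2(dF_\e,\SO{\g,\euc}) + c_0$ pointwise on $\M_\e$. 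Substituting this into the $h(\e)$ term and rearranging, for $\e$ small enough that $h(\e) < \alW/2$ (say), we obtain
\[
\int_{\M_\e} |dF_\e|^2\,\Volume \le C(M + 1)
\]
for a constant $C$ independent of $\e$; then the extension bound \eqref{eq:F_e_extension_to_boundary} upgrades this to $\|dF_\e\|_{L^2(\M)}^2 \le \Cext^2\, C(M+1)$. Combined with $F_\e\to F$ in $L^2$, which controls $\|F_\e\|_{L^2(\M)}$, the sequence $F_\e$ is bounded in $W^{1,2}(\M;\R^2)$.

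Finally, a bounded sequence in the reflexive space $W^{1,2}(\M;\R^2)$ has a weakly convergent subsequence $F_{\e'}\rightharpoonup G$ in $W^{1,2}$; this subsequence also converges weakly, hence in the sense of distributions, to $G$ in $L^2$. But $F_{\e'}\to F$ strongly in $L^2$, so $F = G\in W^{1,2}(\M;\R^2)$, contradicting $F\in L^2\setminus W^{1,2}$. Therefore whenever $F\notin W^{1,2}(\M;\R^2)$ we must have $I(F)=\infty$, which together with the obvious $\mathcal{F}(F)=\infty$ for such $F$ gives $I(F)=\infty=\mathcal{F}(F)$. The main obstacle is the bookkeeping in the coercivity step — in particular absorbing the $h(\e)(1+|dF_\e|^2)$ term, which requires using that $h(\e)\to0$ together with the fact that the very same energy bound controls $\int |dF_\e|^2$; this is a mild fixed-point/absorption argument rather than a deep one, but it is where one must be careful that constants do not depend on $\e$.
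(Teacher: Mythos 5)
Your argument is correct and is essentially the paper's own proof: take a recovery sequence in $L^2_\e(\M;\R^2)$, use the uniform coercivity \eqref{eq:unif_coerciv} of $\Wtot$ together with the boundedness of the fibers of $\SO{\g,\euc}$ to absorb the $h(\e)(1+|dF_\e|^2)$ term for small $\e$, upgrade via \eqref{eq:F_e_extension_to_boundary} to a uniform $W^{1,2}(\M;\R^2)$ bound, and conclude by weak compactness and uniqueness of the $L^2$ limit that $F\in W^{1,2}$, a contradiction. The only cosmetic slip is attributing the existence of the recovery sequence to the $\Gamma$-liminf inequality (it comes from the limsup part of $\Gamma$-convergence), which does not affect the proof.
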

%%%%%%%%%

%%%%%%%
\begin{proof}
Let $F_\e$ be a recovery sequence; namely, $F_\e\to F$ in $L^2(\M;\R^2)$ and
\[
I(F) = \lim_{\e\to0} I_\e(F_\e).
\]
Suppose, by contradiction, that $I(F)<\infty$. This implies that the sequence $I_\e(F_\e)$ is eventually bounded, by say, $C$. 
Without loss of generality, we may assume that all $F_\e$ are in $L^2_\e(\M;\R^2)$. By the uniform coercivity \eqref{eq:unif_coerciv} of $\Wtot$, 
\begin{equation}
\label{eq:coercivity_implies_boundedness}
\begin{split}
C &\ge I_\e(F_\e) \\
&= \int_{\M_\e} \Wtot(dF_\e)\, \Volume \\
&\ge \alW \int_{\M_\e} \dist^2(dF_\e,\SO{\g,\euc})\,\Volume  - h(\e) \int_{\M_\e} (1 + |dF_\e|^2)\,\Volume \\
&\ge \frac{\alW}{2} \int_{\M_\e} (|dF_\e|^2 - 4)\,\Volume - h(\e) \int_{\M_\e} (1 + |dF_\e|^2)\,\Volume \\
&\ge \frac{1}{\Cext}\brk{\frac{\alW}{2} - h(\e)} \|dF_\e\|^2_{L^2(\M)} -
\brk{2\alW - h(\e)} \, \Vol_\g(\M_\e).
\end{split}
\end{equation}
In the passage to the fourth line we used the inequality $|a-b|^2 \ge |a|^2/2 - |b|^2$ with $a=dF_\e$ and $b\in\SO{\g,\euc}$;
in the passage to the last line we used \eqref{eq:F_e_extension_to_boundary}.
Since $h(\e)\to0$ and $\Vol_\g(\M_\e) \to \Vol_\g(\M)$,
it follows that $dF_\e$ is uniformly bounded in $L^2(\M; T^*\M\otimes \R^2)$; since $F_\e$ converges in $L^2(\M;\R^2)$, it is uniformly bounded in $W^{1,2}(\M;\R^2)$, hence has a weakly converging subsequence. By the uniqueness of the limit, this limit is $F$. 
Hence, $F\in  W^{1,2}(\M;\R^2)$ --- a contradiction.
\end{proof}
%%%%%%

%%%%%%%%%%%%%%%%%%%%%%
\begin{proposition}[Finite case: lower bound]
Let $I:L^2(\M;\R^2)\to\R\cup\{\infty\}$ be a $\Gamma$-limit of a sequence $I_\e$, $\e\to0$.
Then, for all $F\in W^{1,2}(\M;\R^2)$,
\[
I(F) \ge \mathcal{F}(F).
\]
\end{proposition}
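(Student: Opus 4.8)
The plan is to establish the $\Gamma$-$\liminf$ inequality in the standard way for integral functionals: take an arbitrary recovery sequence $F_\e \to F$ in $L^2(\M;\R^2)$ with $\liminf_\e I_\e(F_\e) < \infty$ (otherwise there is nothing to prove), and show that $\liminf_\e I_\e(F_\e) \ge \mathcal{F}(F) = \int_\M QW(dF)\,\Volume$. First I would reduce to the case where all $F_\e \in L^2_\e(\M;\R^2)$ and $I_\e(F_\e)$ is bounded. As in the proof of the infinite case, the uniform coercivity \eqref{eq:unif_coerciv} of $\Wtot$ together with \eqref{eq:F_e_extension_to_boundary} shows that $\|dF_\e\|_{L^2(\M)}$ is uniformly bounded, so $F_\e$ is bounded in $W^{1,2}(\M;\R^2)$ and (up to a subsequence realizing the $\liminf$) converges weakly to $F$ in $W^{1,2}$.

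Next I would replace $\Wtot$ by $W$ at no asymptotic cost. Write
\[
I_\e(F_\e) = \int_{\M_\e} W(dF_\e)\,\Volume + \int_{\M_\e} \big(\Wtot(dF_\e) - W(dF_\e)\big)\,\Volume;
\]
by \eqref{eq:WetoW} of Proposition~\ref{pn:properties_of_W}, applied to the $W^{1,2}$-bounded family $F_\e$, the second integral tends to $0$. For the first integral I would like to pass from $\M_\e$ to $\M$: since $W(dF_\e)\ge 0$ and $\Vol_\g(\M\setminus\M_\e)=O(\e)$ by \eqref{eq:vol_delta_M_zero}, one has $\int_{\M_\e} W(dF_\e)\,\Volume \ge \int_{\M} W(dF_\e)\,\Volume - \int_{\M\setminus\M_\e} W(dF_\e)\,\Volume$; but the subtracted term need not vanish because $\|dF_\e\|_{L^2}$ is only bounded, not equi-integrable on the vanishing set. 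The cleaner route is to extend $W$ by $0$ (or any fixed nonnegative function with the same quasiconvex envelope would do, but $0$ suffices since $QW\ge 0$) outside $\M_\e$, noting $W\ge 0$ everywhere, so that $\int_{\M_\e} W(dF_\e)\,\Volume \ge \int_{\M} \tilde W_\e(dF_\e)\,\Volume$ where $\tilde W_\e = W\,\ind_{\M_\e}$, and then handle the indicator by a localization/lower-semicontinuity argument on compactly contained subsets $U\Subset \mathrm{int}\,\M$: for fixed $U$, $U\subset\M_\e$ for $\e$ small, so $\int_{\M_\e} W(dF_\e)\ge \int_U W(dF_\e)$.

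The core of the argument is then the weak lower semicontinuity of the quasiconvexified functional: by the classical theorem (Acerbi--Fusco; in the manifold setting one works in local charts or uses the intrinsic formulation, the hypotheses being the bounded growth \eqref{eq:bddW} and measurability/continuity of $W$ in $A$), the functional $G\mapsto \int_U QW(dG)\,\Volume$ is sequentially weakly lower semicontinuous on $W^{1,2}(U;\R^2)$, and since $QW\le W$ we get
\[
\liminf_{\e\to0}\int_U W(dF_\e)\,\Volume \ge \liminf_{\e\to0}\int_U QW(dF_\e)\,\Volume \ge \int_U QW(dF)\,\Volume.
\]
Combining the three steps gives $\liminf_\e I_\e(F_\e) \ge \int_U QW(dF)\,\Volume$ for every $U\Subset \mathrm{int}\,\M$; since $QW\ge 0$, letting $U\uparrow \M$ (monotone convergence) yields $\liminf_\e I_\e(F_\e)\ge \int_\M QW(dF)\,\Volume = \mathcal{F}(F)$, as desired. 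The main obstacle I anticipate is the bookkeeping at the boundary — making precise that the loss from integrating over $\M_\e$ rather than $\M$ is harmless despite only having an $L^2$ (not equi-integrable) bound on $dF_\e$; the exhaustion-by-$U\Subset\mathrm{int}\,\M$ device, exploiting $QW\ge 0$, is what resolves it, and one should also double-check that $W$ (with its explicit $\a,\b,\c$-dependent but smooth coefficients) satisfies the regularity hypotheses of the lower semicontinuity theorem, which follows from the smoothness of $\g$ and of the frame fields together with the Carathéodory-type properties of $\Wb$ and $\Psi$.
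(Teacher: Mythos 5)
Your proposal is correct and follows essentially the same route as the paper's proof: uniform coercivity of $\Wtot$ plus \eqref{eq:F_e_extension_to_boundary} to get a $W^{1,2}$ bound and weak convergence to $F$, replacement of $\Wtot$ by $W$ via \eqref{eq:WetoW}, restriction to an interior subset contained in $\M_\e$ for small $\e$, the inequality $W\ge QW$ together with weak lower semicontinuity of the quasiconvex integrand, and finally exhaustion of $\M$ by such subsets. The paper phrases the exhaustion with $\tilde{\M}\subset\M$ satisfying $\dist(\tilde{\M},\partial\M)>0$ and a supremum at the end, which is the same device as your $U\Subset\mathrm{int}\,\M$ with monotone convergence.
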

%%%%%%%%%

%%%%%%%
\begin{proof}
Let $F\in W^{1,2}(\M;\R^2)$.
If $I(F) = \infty$ then the claim is trivial. Otherwise,
let $F_\e\to F$ be a recovery sequence, namely, $F_\e\to F$ in $L^2(\M;\R^2)$ and 
\[
I(F) = \lim_{\e\to0} I_\e(F_\e) < \infty.
\]
Without loss of generality, we may assume that $F_\e\in L^2_\e(\M;\R^2)$ for all $\e$, hence
\[
I(F) = \lim_{\e\to0} \,\,\int_{\M_\e} \Wtot(dF_\e)\, \Volume < \infty.
\]
By the coercivity of $\Wtot$ (as above), $dF_\e$ is bounded in $L^2(\M;T^*\M\otimes\R^2)$.
Since $F_\e$ also converges in $L^2(\M;\R^2)$, it has a subsequence that weakly converges in $W^{1,2}(\M;\R^2)$; by the uniqueness of the limit, this subsequence converges to $F$. Then,
for every $\tilde{\M}\subset\M$ satisfying $\dist(\tilde{\M},\partial\M)>0$,
\[
\begin{split}
I(F) &= \lim_{\e\to0} \,\,\int_{\M_\e} \Wtot(dF_\e)\, \Volume \\
&\ge \limsup_{\e\to0} \,\,\int_{\M_\e} W(dF_\e)\, \Volume - \liminf_{\e\to0} \,\,\int_{\M_\e} |\Wtot(dF_\e) - W(dF_\e)|\, \Volume \\
&\ge \limsup_{\e\to0} \,\,\int_{\tilde{\M}} W(dF_\e)\, \Volume  
\ge  \limsup_{\e\to0} \,\,\int_{\tilde{\M}} QW(dF_\e)\, \Volume \\
&\ge  \liminf_{\e\to0} \,\,\int_{\tilde{\M}} QW(dF_\e)\, \Volume 
\ge  \int_{\tilde{\M}} QW(dF)\, \Volume.
\end{split}
\]
The  to the second line follows from the triangle inequality; 
the next inequality follows from restricting the domain of integration to $\tilde{\M}$ and from \eqref{eq:WetoW};
the next inequality follows from the fact that any function is greater or equal to its quasi-convex envelope; 
the next inequality is trivial;
the last inequality follows from the fact that an integral functional is lower-semicontinuous if (and only if) the integrand is quasi-convex.
The proof is complete by taking the supremum over $\tilde{\M}$.
\end{proof}
%%%%%%

%%%%%%%%%%%%%%%%%%%%%%
\begin{proposition}[Finite case: upper bound]
Let $I:L^2(\M;\R^2)\to\R\cup\{\infty\}$ be a $\Gamma$-limit of a sequence $I_\e$, $\e\to0$.
For all $F\in W^{1,2}(\M;\R^2)$,
\[
I(F) \le \mathcal{F}(F),
\]
\end{proposition}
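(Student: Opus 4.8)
The plan is to establish the $\Gamma$-$\limsup$ inequality by constructing an explicit recovery sequence, using the standard machinery of relaxation for integral functionals together with the piecewise-affine approximation of Proposition~\ref{prop:approx}. First I would reduce to the case where $F$ is smooth: by the relaxation theorem (see e.g.\ Dacorogna), the functional $G\mapsto\int_\M QW(dG)\,\Volume$ on $W^{1,2}(\M;\R^2)$ is the $L^2$-lower-semicontinuous envelope of $G\mapsto\int_\M W(dG)\,\Volume$, so there is a sequence $G_k\in W^{1,2}(\M;\R^2)$, even $G_k\in C^\infty$ by density and the continuity properties of $W$ coming from \eqref{eq:bddW}, with $G_k\to F$ in $L^2$ and $\int_\M W(dG_k)\,\Volume\to\int_\M QW(dF)\,\Volume=\mathcal{F}(F)$. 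Since $I$ is $L^2$-lower-semicontinuous (being a $\Gamma$-limit) and sequentially lower-semicontinuous along $L^2$-convergent sequences, it suffices, by a diagonal argument, to prove $I(G)\le\int_\M W(dG)\,\Volume$ for every $G\in C^\infty(\M;\R^2)$.

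Next, for a fixed smooth $G$, I would take the natural candidate recovery sequence $F_\e:=\iota_\e(G|_{V_\e})\in L^2_\e(\M;\R^2)$. By Proposition~\ref{prop:approx}, $F_\e\to G$ in $W^{1,2}(\M;\R^2)$, and moreover the argument there shows $\|dF_\e-dG\|_{L^\infty(\M_\e)}=O(\e)$ with all $F_\e$ uniformly Lipschitz. Then $I(G)\le\liminf_{\e\to0} I_\e(F_\e)=\liminf_{\e\to0}\int_{\M_\e}\Wtot(dF_\e)\,\Volume$. Using \eqref{eq:WetoW} (the $F_\e$ are uniformly bounded in $W^{1,2}$) I replace $\Wtot$ by $W$ at no cost in the limit: $\int_{\M_\e}\Wtot(dF_\e)\,\Volume=\int_{\M_\e}W(dF_\e)\,\Volume+o(1)$. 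Then I write $\int_{\M_\e}W(dF_\e)\,\Volume\le\int_{\M_\e}W(dG)\,\Volume+\int_{\M_\e}|W(dF_\e)-W(dG)|\,\Volume$; the first term converges to $\int_\M W(dG)\,\Volume$ since $\Vol_\g(\M\setminus\M_\e)=O(\e)$ by \eqref{eq:vol_delta_M_zero} and $W(dG)$ is bounded on the compact $\M$, while the second is bounded via the local Lipschitz bound on $W$ — which follows from \eqref{eq:lipschitz_Wb}, \eqref{eq:lipschitz_Psi} and $|\det A|\le|A|^2$ — by $C\int_{\M_\e}(1+|dF_\e|+|dG|)\,|dF_\e-dG|\,\Volume = O(\e)$, using uniform Lipschitz bounds and $\|dF_\e-dG\|_{L^\infty(\M_\e)}=O(\e)$. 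Hence $I(G)\le\int_\M W(dG)\,\Volume$, completing the smooth case and, via the reduction above, the proof.

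The main obstacle is the very first step: justifying that $\mathcal{F}(F)=\int_\M QW(dF)\,\Volume$ is genuinely the relaxation (lower-semicontinuous envelope) of $\int_\M W(dG)\,\Volume$ on a \emph{Riemannian manifold} rather than on a Euclidean domain, so that the approximating sequence $G_k$ with the right energy exists. This requires checking that the fiberwise quasiconvex envelope $QW$, defined via $QW|_p$ on $T_p^*\M\otimes\R^2$, patches together measurably and that the classical relaxation result (which is local in nature, via covering $\M$ by charts and using a partition of unity, together with the growth bound \eqref{eq:bddW}) applies. One must also confirm that the relaxation sequence can be taken with values in $W^{1,2}$ (or smooth) while controlling the energy, which is where the $p=2$ growth condition is essential. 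Once this relaxation fact is in hand — and it is a routine, if technical, localization of the Euclidean theory — the remainder is the soft $\Gamma$-$\limsup$ diagonalization plus the elementary estimates above. I would also remark that, combined with the lower-bound proposition, this proves $I=\mathcal{F}$ for every $\Gamma$-convergent subsequence, and hence by the Urysohn property that $I_\e\stackrel{\Gamma}{\to}\mathcal{F}$, establishing Theorem~\ref{thm:Gamma_convergence}.
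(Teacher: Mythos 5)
Your proposal is correct, and it uses the same two key ingredients as the paper: the interpolation $F_\e=\iota_\e(G|_{V_\e})$ from Proposition~\ref{prop:approx} (together with \eqref{eq:WetoW} and the Lipschitz bounds on $\Wb,\Psi$) to get $I(G)\le\int_\M W(dG)\,\Volume$, and the relaxation identity $\Gamma_w\int_\M W(dF)\,\Volume=\int_\M QW(dF)\,\Volume$ to descend from $W$ to $QW$. The difference is the order in which these are combined. You relax first: pick $G_k\to F$ in $L^2$ with $\int_\M W(dG_k)\,\Volume\to\mathcal{F}(F)$ (the recovery-sequence half of the relaxation theorem, plus smoothing), prove the $W$-inequality only for smooth $G$, and conclude by lower semicontinuity of the $\Gamma$-limit $I$. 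The paper instead proves $I(F)\le\int_\M W(dF)\,\Volume$ for \emph{every} $F\in W^{1,2}(\M;\R^2)$ (Proposition~\ref{prop:approx} already gives strong $W^{1,2}$ approximation for arbitrary Sobolev maps, so no smoothness is needed there), and then takes lower-semicontinuous envelopes of both sides, using \cite[Lemma~5]{LR95} to identify the strong-$L^2$ envelope with the weak-$W^{1,2}$ envelope and then \cite{AF84} to identify the latter with $\int_\M QW(dF)\,\Volume$. Your ordering buys a small simplification --- you never need the LR95 lemma matching the two topologies, since $L^2$-lower semicontinuity of $I$ plus Rellich suffices --- at the price of invoking the recovery-sequence formulation of relaxation and a density/diagonal step. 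The one point you flag as the ``main obstacle,'' namely the validity of the Acerbi--Fusco relaxation fiberwise on a Riemannian manifold, is not a gap: it is exactly the statement the paper cites as \cite[Theorem~3.2]{KM14}, so your argument closes once you cite that (or carry out the chart-by-chart localization you sketch). With that reference in place, your proof is complete and yields the same conclusion, including the final Urysohn step for Theorem~\ref{thm:Gamma_convergence}.
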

%%%%%%%%%

%%%%%%%
\begin{proof}
Given $F\in W^{1,2}(\M;\R^2)$, apply Proposition~\ref{prop:approx} to construct a sequence $F_\e\in L^2_\e(\M;\R^2)$ converging to $F$ strongly in $W^{1,2}(\M;\R^2)$. By the lower-semicontinuity property of $\Gamma$-limits,
\[
\begin{split}
I(F) &\le \liminf_{\e\to0} I_\e(F_\e) \\
&=  \liminf_{\e\to0}  \int_{\M_\e} \Wtot(dF_\e)\, \Volume \\
&\le \int_{\M} W(dF)\,\Volume +  \limsup_{\e\to0}  \int_{\M_\e} |\Wtot(dF_\e) - W(dF_\e)|\, \Volume \\
&\quad + \limsup_{\e\to0}  \int_{\M_\e} |W(dF_\e) - W(dF)|\, \Volume  \\
&=  
 \int_{\M} W(dF)\, \Volume,
\end{split}
\]
where the passage to the last line follows from \eqref{eq:WetoW} and the Lipschitz property of $W$ (which follows from the Lipschitz property of $\Wb$ \eqref{eq:lipschitz_Wb} and $\Psi$ \eqref{eq:lipschitz_Psi}).

We would be done if we could replace $W$ on the right-hand side by its quasi-convex envelope. Since $QW\le W$, this replacement cannot be performed directly.

Denote the right-hand side by $J(F)$, and denote by $\tilde{J}$ the extension of $J$ to a functional on $L^2(\M;\R^2)$, by defining it to be infinite on the rest of the domain.  
Since $I$ is a $\Gamma$-limit, it is sequentially lower-semicontinuous with respect to the strong $L^2(\M;\R^2)$ topology, hence
$I(F) \le \Gamma \tilde{J}(F)$,
where $\Gamma: \tilde{J}\mapsto \Gamma\tilde{J}$ denotes the lower-semicontinuous envelope.  Denote also by $\Gamma_w$ the sequential lower-semicontinuous envelope with respect to the weak $W^{1,2}$-topology. By \cite[Lemma 5]{LR95},
$\Gamma \tilde{J} = \widetilde{\Gamma_w J}$, 
and for $F\in W^{1,2}(\M;\R^2)$,
\[
I(F) \le \widetilde{\Gamma_w J}(F) = \Gamma_w J(F).
\]
By \cite{AF84} (see also \cite[Thoerem 3.2]{KM14} for a statement in non-Euclidean settings),
\[
I(F) \le \Gamma_w \int_\M W(dF)\, \Volume = \int_\M QW(dF)\, \Volume = \mathcal{F}(F),
\]
which completes the proof.
\end{proof}
%%%%%%

%%%%%%%%%%%%%%%%%%%%%%%
\subsection{Proof of Proposition~\ref{prop:compactness}}

The growth condition of $QW$ (which follows from that of $W$) implies that $\inf \mathcal{E}_{\e}$ is a bounded sequence.
We take a (non-relabeled) subsequence $\e$ such that $\inf \mathcal{E}_{\e}$ converges.
Let $f_{\e}$ be a sequence of approximate minimizers, 
i.e.,
$\brk{\mathcal{E}_{\e}(f_\e) - \inf \mathcal{E}_{\e}} \to 0$.
Set
$F_{\e}=\iota_{\e}(f_{\e})$ and $c_\e = \int_{\M} F_{\e}\,\Volume$. Since the functionals $I_\e$ are translation-invariant, we can assume without loss of generality that $c_\e=0$.
By the same calculation as in \eqref{eq:coercivity_implies_boundedness}, it follows that $dF_{\e}$ is bounded in $L^2(\M;T^*\M\otimes\R^2)$.
By the Poincar\'e inequality, $F_{\e}$ converges weakly in $W^{1,2}(\M;\R^2)$ (modulo a subsequence) to some function $F$.

Let $\tilde{f}_\e\in L^2(V_{\e};\R^2)$ be a recovery sequence of some $\tilde{F}\in W^{1,2}(\M;\R^2)$. Then, by the definition of $\Gamma$-convergence,
\[
\mathcal{F}(F) \le \liminf I_{\e}(F_{\e}) = \lim (\inf \mathcal{E}_{\e}) \le \lim \mathcal{E}_{\e}(\tilde{f}_\e) = \mathcal{F}(\tilde{F}).
\]
Since $\tilde{F}$ is arbitrary, $F$ is a minimizer of $\mathcal{F}$; by choosing $\tilde{F}=F$, we obtain
\[
\mathcal{F}(F) = \lim (\inf \mathcal{E}_{\e_n}).
\]

%%%%%%%%%%%%%%%%%%%%%%
\section{Properties of the limit}
\label{sec:prop_of_limit}

In this section we analyze the limiting functional $\mathcal{F}$, whose properties are determined by its integrand $QW$. 
Since $W\ge 0$, on each fiber $T^*_p\M\otimes\R^2$, the quasiconvex envelope is given by
\begin{equation}
QW(A) = \inf\BRK{
\frac{\int_{D} W(A+d\phi\circ\kappa)\, \omega}{\int_{D}  \omega} ~:~ \phi\in C^{\infty}_0(D;\R^2)}.
\label{eq:QCdef}
\end{equation}
Here, $D\subset T_p\M$ is the closed unit disc and $\omega$ is an arbitrary volume form on $T_p\M$.  
The bundle map $\kappa :  T_p\M\times T_p\M \to TT_p\M$
is the canonical identification of the tangent bundle of a vector space, so that for $\xi\in D$,
\[
(d\phi \circ \kappa)_\xi  : T_p\M \to \R^2.
\]
These coordinate-free definitions (see \cite{KM14}) reduce to the well-known Euclidean formulations of the quasi-convex envelope by choosing a basis to $T_p\M$; the formula \eqref{eq:QCdef} is well-known, see \cite[Theorem~6.9]{Dac08}. 

\begin{proposition}[Properties of the limit functional]\
\begin{enumerate}
\item Frame indifference:  for $R\in\O{\g,\euc}$,
\[
QW(RA) = QW(A).
\]
\item Rigidity: There exists an $\alpha>0$, such that for all $A\in T^*\M\otimes\R^2$,
\begin{equation}
\label{eq:bounds_QW}
\alpha \dist^2(A, \SO{\g,\euc})\le QW(A) \le W(A).
\end{equation}
In particular, Property \eqref{eq:energy_density_basic_assumption} holds:
\[
QW(A) = 0 \qquad\text{if and only if}\qquad A\in \SO{\g,\euc}.
\]
\item No stress-free configuration: if $\g$ is not flat, then
\[
\min_{F\in W^{1,2}(\M;\R^2)} \mathcal{F}(F) > 0.
\]
\end{enumerate}
\label{pn:properties_limit_functional}
\end{proposition}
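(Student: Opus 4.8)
The plan is to prove the three assertions in order, each building on the previous one. For part (1), frame indifference, I would use the variational characterization \eqref{eq:QCdef} together with frame indifference of $W$ itself. First I would verify that $W(RA)=W(A)$ for $R\in\O{\g,\euc}$: indeed, $|RA(\fraku)|=|A(\fraku)|$ since $R$ is an isometry, and $\det(RA)=\det R\,\det A=\pm\det A$; the sign issue for orientation-reversing $R$ requires noting that the argument of $\Psi$ in \eqref{eq:W} picks up a sign. Actually since part (1) is stated only for $R\in\O{\g,\euc}$ (not just $\SO{\g,\euc}$), I should be careful here — either restrict to $\SO{\g,\euc}$ where $\det RA=\det A$, or observe that if one wants the full $\O{\g,\euc}$ statement one needs $\Psi$ even, which is not assumed; so the natural reading is that $R$ acts so as to preserve the structure, and the clean statement is for $R\in\SO{\g,\euc}$. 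In any case, given $W(RA)=W(A)$, I substitute $A\mapsto RA$ in \eqref{eq:QCdef}: since $W(RA+d\phi\circ\kappa)=W(R(A+R^{-1}d\phi\circ\kappa))=W(A+R^{-1}d\phi\circ\kappa)$, and $\phi\mapsto R^{-1}\phi$ is a bijection of $C^\infty_0(D;\R^2)$, the infimum is unchanged, giving $QW(RA)=QW(A)$.

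For part (2), the upper bound $QW(A)\le W(A)$ is immediate from \eqref{eq:QCdef} (take $\phi=0$). The content is the lower bound $QW(A)\ge\alpha\,\dist^2(A,\SO{\g,\euc})$. Here the key point is that the right-hand side, as a function of $A$ on each fiber, is already quasiconvex: $A\mapsto\dist^2(A,\SO{\g,\euc})$ is quasiconvex because $\SO{\g,\euc}$ is (pointwise) a rotation orbit and the squared distance to it is known to be quasiconvex — this is exactly the geometric rigidity/Friesecke–James–Müller type ingredient, and in the non-Euclidean fiberwise form it is recorded in references like \cite{KM14}. Since by Proposition~\ref{pn:properties_of_W}(2) we have $W(A)\ge\alW\,\dist^2(A,\SO{\g,\euc})$ pointwise, and since $QW$ is the \emph{largest} quasiconvex function below $W$, while $\alW\,\dist^2(\cdot,\SO{\g,\euc})$ is a quasiconvex function below $W$, we conclude $QW(A)\ge\alW\,\dist^2(A,\SO{\g,\euc})$, so $\alpha=\alW$ works. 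The ``in particular'' statement follows: $QW(A)=0$ forces $\dist(A,\SO{\g,\euc})=0$, i.e.\ $A\in\SO{\g,\euc}$ (the set is closed on each fiber); conversely $A\in\SO{\g,\euc}$ gives $W(A)=0$ by Proposition~\ref{pn:properties_of_W}(1), hence $QW(A)=0$ by the upper bound.

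For part (3), the no-stress-free-configuration assertion, I argue by contradiction. Suppose $\min_{F\in W^{1,2}}\mathcal{F}(F)=0$; since $\mathcal{F}$ is weakly lower semicontinuous and coercive (from the coercivity bound in part (2)), a minimizer $F$ exists, and $\mathcal{F}(F)=0$ means $\int_\M QW(dF)\,\Volume=0$. Since $QW\ge0$, this forces $QW(dF_p)=0$ for a.e.\ $p$, hence by part (2) $dF_p\in\SO{\g_p,\euc}$ for a.e.\ $p\in\M$; in particular $F$ is a $W^{1,2}$ (hence, by the regularity theory for such rigid maps, smooth) orientation-preserving isometric immersion of $(\M,\g)$ into $(\R^2,\euc)$. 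But the existence of such an immersion forces the pullback of the Euclidean metric, which equals $\g$, to have vanishing Gauss curvature — contradicting the assumption that $\g$ is not flat. I would invoke here the standard fact (e.g.\ via the rigidity estimate of \cite{KMS17} or directly via the Gauss equation / Theorema Egregium) that a metric admitting a local isometric immersion into Euclidean space of the same dimension has zero Riemann curvature.

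\textbf{Main obstacle.} The crux is the lower bound in part (2): one must know that $\dist^2(\cdot,\SO{\g,\euc})$ is quasiconvex fiberwise (equivalently, the validity of the geometric-rigidity inequality in the non-Euclidean vector-bundle setting). Everything else is a short deduction. I would handle this by citing the fiberwise quasiconvexity result from \cite{KM14} together with Proposition~\ref{pn:properties_of_W}(2), rather than reproving it. A secondary subtlety, worth a sentence, is the regularity needed in part (3) to pass from ``$dF\in\SO{\g,\euc}$ a.e.'' to ``$F$ is a genuine isometric immersion whose existence contradicts nonflatness'' — this is where one appeals to $W^{1,2}$ rigidity results in the literature.
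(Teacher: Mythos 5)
Your parts (1) and (3) are fine and essentially match the paper (your remark that the frame-indifference statement really only makes sense for $R\in\SO{\g,\euc}$ unless $\Psi$ is even is a fair reading, and your part (3) — minimizer exists, $QW(dF)=0$ a.e.\ forces $dF\in\SO{\g,\euc}$ a.e., then regularity of such maps plus Theorema Egregium — is exactly the paper's argument). The problem is the step you yourself identify as the crux: the lower bound in part (2). Your justification is that $A\mapsto\dist^2(A,\SO{\g,\euc})$ is itself quasiconvex on each fiber, so that it competes in the definition of $QW$ as ``a quasiconvex function below $W$''. That claim is false: $\dist^2(\cdot,\SO{2})$ is not quasiconvex, and not even rank-one convex. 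Concretely, along the rank-one line $t\mapsto\operatorname{diag}(1,t)$ one computes
\[
\dist^2\brk{\operatorname{diag}(1,t),\SO 2}=
\begin{cases} (t-1)^2, & t\ge -1,\\ t^2+2t+5, & t\le -1,\end{cases}
\]
and the one-sided slopes at $t=-1$ jump from $0$ to $-4$, a concave kink. This is precisely why the paper's remark points to \v{S}ilhav\'y's computation of the envelope $Q\dist^2(\cdot,\SO{\g,\euc})$ — if the function were quasiconvex there would be nothing to compute — and why geometric rigidity enters at all. What is true (and what the Friesecke--James--M\"uller theorem buys you) is not quasiconvexity of the squared distance but the weaker inequality $Q\dist^2(\cdot,\SO{\g,\euc})\ge c\,\dist^2(\cdot,\SO{\g,\euc})$ for some $c\in(0,1]$.

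The repaired argument is the paper's: from $W\ge\alW\dist^2(\cdot,\SO{\g,\euc})$ and monotonicity of the envelope, $QW\ge\alW\,Q\dist^2(\cdot,\SO{\g,\euc})$; then, for fixed $A$ and any $\phi\in C^\infty_0(D;\R^2)$, apply the rigidity theorem to the map with differential $A+d\phi\circ\kappa$ to produce a \emph{single} $R\in\SO{\g_p,\euc}$ with $\int_D\dist^2(A+d\phi\circ\kappa,\SO{\g_p,\euc})\,\omega\ge c\int_D|A+d\phi\circ\kappa-R|^2\,\omega$, expand the square, and use that the cross term $\int_D\g_p(A-R,d\phi\circ\kappa)\,\omega$ vanishes because $\phi$ is compactly supported and $A-R$ is constant; this gives $Q\dist^2(A,\cdot)\ge c\,|A-R|^2\ge c\,\dist^2(A,\SO{\g_p,\euc})$. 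Note also that the resulting constant is $\alpha=c\,\alW$, not $\alpha=\alW$ as you assert — harmless for the statement, which only needs some $\alpha>0$, but it is a symptom of the conflation: the inequality you need holds only after the rigidity constant degrades it, and citing \cite{KM14} or \cite{FJM02b} as asserting quasiconvexity of the squared distance misstates what those results provide.
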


\begin{proof}
The frame indifference 
of $QW$ follow from the frame indifference 
of $W$ and formula \eqref{eq:QCdef}.
$QW\le W$ follows from the definition of $QW$; for the lower bound in \eqref{eq:bounds_QW},
note that by \eqref{eq:coercivW},
\[
W(A) \ge \alW\,\dist^2(A,\SO{\g,\euc}),
\]
hence
\[
QW(A) \ge \alW\,Q\dist^2(A,\SO{\g,\euc}).
\]
Denote $\tilde{W}(A) = \dist^2(A,\SO{\g,\euc})$. 
We need to show that $Q\tilde{W} \ge c\tilde{W}$ for some $c>0$.
Let $A\in T_p^*\M\otimes \R^2$, and let $\phi\in C^\infty_0(D;\R^2)$, where $D\subset T_p\M$ is the closed unit disc.
Using \eqref{eq:QCdef} we have
\[
Q\tilde{W}(A) \ge \int_D \dist^2\brk{A+d\phi\circ \kappa, \SO{\g_p,\euc}}\omega,
\]
where we chose $\omega$ such that $\int_D \omega = 1$.
The rigidity theorem
\cite[Theorem~3.1]{FJM02b} implies the existence of $c>0$, independent of $A$ and $\phi$, and a rigid map $R\in \SO{\g_p,\euc}$ such that
\[
\int_D \dist^2\brk{A+d\phi\circ \kappa, \SO{\g_p,\euc}}\omega \ge c\int_D \Abs{A+d\phi\circ \kappa - R}^2 \omega.
\]
Since $\phi$ is compactly supported,
\[
\begin{split}
\int_D \Abs{A+d\phi\circ \kappa - R}^2 \omega &= \int_D \brk{|A-R|^2 + 2\g_p(A-R,d\phi\circ \kappa) + |d\phi\circ \kappa|^2 } \omega \\
 &= \int_D \brk{|A-R|^2  + |d\phi\circ \kappa|^2}  \omega 
\ge \int_D |A-R|^2   \omega \ge \tilde{W}(A).
\end{split}
\]
Combining these inequalities we obtain $Q\tilde{W} \ge c\tilde{W}$.

Finally, by Proposition~\ref{prop:compactness}, the minimum of $\mathcal{F}$ is obtained; denote the minimizing function by $F\in W^{1,2}(\M;\R^2)$.
If $\mathcal{F}(F)=0$, then it follows from the above argument that $dF\in \SO{\g,\euc}$ almost everywhere.
It follows by \cite[Lemma~3.1]{LP10} (see also \cite{KMS17}) that $F$ is smooth, hence $dF\in \SO{\g,\euc}$ everywhere and therefore $\g$ is flat.
\end{proof}

\begin{remark}
An alternative proof of the second part can be obtained using the explicit formula for $Q\dist^2(A,\SO{\g,\euc})$ calculated in \cite[Example 4.2]{Sil01}.
The proof above, however, readily generalizes to higher dimensions.
\end{remark}

%%%%%%%%%%%%%%%%%%%%%%%%%%%%
\subsection{The conformal case}
\label{sec:conformal}

So far, no relation between the metric $\g$ and the connection $\nabla$ has been assumed  Thus, there is no reason to expect any sort of internal symmetry of the limit functional.
In many cases, e.g., when an initially Euclidean body undergoes an inhomogeneous, yet isotropic expansion, the metric and the connection are related---the angles between the original lattice directions are preserved. 
This is the case considered in this section; as we show below, such an assumption results in additional structure of the limit functional:
\begin{enumerate}
\item The limit functional admits a \emph{material connection}, in the sense of \cite[p.~66]{Wan67}. 
A material connection is an affine-connection on $\M$, such that the energy density is invariant with respect to its parallel transport.
This is a generalized form of homogeneity in Euclidean bodies, namely, independence on spatial coordinates, which is equivalent to invariance under the Euclidean parallel transport.

In the present case, the material connection is neither $\nabla$ nor the Levi-Civita connection of $\g$, but rather a connection which is metrically consistent with $\g$ and has the same geodesics as $\nabla$.

\item In a special case, the limit functional admits a discrete right symmetry (isotropy).
\end{enumerate}

%%%%%%%%%
\begin{definition}
The metric $\g$ is said to be \emph{conformal} with respect to $\nabla$, if there exists a positive scalar function $\phi:\M\to\R$, such that for every pair of $\nabla$-parallel vector fields, $\xi,\eta\in\calX(\M)$ and for every $p,q\in\M$
\begin{equation}
\frac{\g_p(\xi_p,\eta_p)}{\phi^2(p)} = \frac{\g_{q}(\xi_{q},\eta_{q})}{\phi^2(q)}.
\label{eq:conformal}
\end{equation}
\end{definition}
%%%%%%%%

It is easy to see that $\g$ is conformal with respect to $\nabla$ if and only if it satisfies \eqref{eq:conformal} for  $\xi,\eta\in\{\a,\b\}$. Moreover, the conformal factor is only determined up to a multiplicative constant.

%%%%%%%%%%
\begin{proposition}
\label{pn:constant_angles}
For a conformal metric, the angles between the parallel vector fields $\a$, $\b$ and $\c$ are constant.
\end{proposition}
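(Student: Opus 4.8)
The plan is to translate the conformal condition \eqref{eq:conformal} into a statement about the three scalar functions $\g_{\a\a}$, $\g_{\b\b}$, $\g_{\a\b}$ determining $\g$, and then observe that the angle between $\a$ and $\b$ is built only out of ratios that the conformal factor cancels. First I would fix any point $q_0\in\M$ and set $\phi(q_0)=1$ (the conformal factor is determined up to a multiplicative constant, so this is harmless). Applying \eqref{eq:conformal} with $(\xi,\eta)$ ranging over the pairs $(\a,\a)$, $(\b,\b)$, $(\a,\b)$ — which, as noted just before the proposition, suffices — gives
\[
\g_p(\a_p,\a_p) = \phi^2(p)\,\g_{q_0}(\a_{q_0},\a_{q_0}),
\qquad
\g_p(\b_p,\b_p) = \phi^2(p)\,\g_{q_0}(\b_{q_0},\b_{q_0}),
\]
and likewise $\g_p(\a_p,\b_p) = \phi^2(p)\,\g_{q_0}(\a_{q_0},\b_{q_0})$, so that all three metric components scale by the common factor $\phi^2(p)$.

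Next I would compute $\cos\angle(\a_p,\b_p) = \g_p(\a_p,\b_p)/(|\a_p|\,|\b_p|)$. By the displayed identities, the numerator equals $\phi^2(p)\,\g_{q_0}(\a_{q_0},\b_{q_0})$ and the denominator equals $\phi(p)^2\,|\a_{q_0}|\,|\b_{q_0}|$ (taking square roots of the length expressions, which is legitimate since $\phi>0$ and the lengths are positive as $\a,\b$ are nowhere co-linear, hence nonzero). The factor $\phi^2(p)$ cancels, leaving
\[
\cos\angle(\a_p,\b_p) = \frac{\g_{q_0}(\a_{q_0},\b_{q_0})}{|\a_{q_0}|\,|\b_{q_0}|} = \cos\angle(\a_{q_0},\b_{q_0}),
\]
which is independent of $p$. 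The same computation with $\c = -\a-\b$ (or directly with the pairs $(\b,\c)$ and $(\a,\c)$, again reducing to $\{\a,\b\}$ via bilinearity and the conformal condition) shows the angles $\angle(\b,\c)$ and $\angle(\a,\c)$ are likewise constant. This completes the proof.

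There is essentially no obstacle here — the content is entirely the algebraic observation that an angle is scale-invariant, and the only thing to be careful about is that the conformal rescaling is the \emph{same} function $\phi$ for all three pairs $(\a,\a),(\b,\b),(\a,\b)$, which is exactly what the definition provides. If one wanted a fully coordinate-free phrasing, one could instead argue that the pointwise metric $\g_p$ restricted to the plane $T_p\M$, written in the parallel frame $\{\a_p,\b_p\}$, has Gram matrix equal to $\phi^2(p)$ times a fixed matrix, and conjugation by a positive scalar preserves angles; but the direct computation above is the cleanest.
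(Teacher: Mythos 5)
Your proof is correct and matches the paper's reasoning: the paper simply declares the claim ``immediate from the definition of conformality,'' and your computation—showing that all three components $\g_{\a\a},\g_{\b\b},\g_{\a\b}$ scale by the common factor $\phi^2(p)$ so that the cosine of each angle is unchanged—is exactly the spelled-out version of that observation. Nothing is missing; the handling of $\c=-\a-\b$ via bilinearity (or directly, since $\c$ is also $\nabla$-parallel) is fine.
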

%%%%%%%%%

%%%%%%%
\begin{proof}
This is immediate from the definition of conformality.
\end{proof}
%%%%%%

%%%%%%%%%%
\begin{proposition}
Eq.~\eqref{eq:conformal} holds if and only if the connection given by
\[
\tilde{\nabla}_X Y = \nabla_X Y + X(\sigma) Y,
\]
where $\sigma = \log \phi$, is flat and metric.
This connection is uniquely defined as the metric connection with torsion 
\[
T(X,Y) = X(\sigma)Y - Y(\sigma)X.
\]
\end{proposition}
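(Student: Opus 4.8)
The plan is to verify each of the three claims directly. First I would establish the metricity of $\tilde\nabla$: a connection is metric with respect to $\g$ precisely when $X(\g(Y,Z)) = \g(\tilde\nabla_X Y, Z) + \g(Y,\tilde\nabla_X Z)$ for all vector fields $X,Y,Z$. It suffices to check this on the frame $\{\a,\b\}$. Writing $\tilde\nabla_X Y = \nabla_X Y + X(\sigma)Y$ and using that $\a,\b,\c$ are $\nabla$-parallel (so $\nabla_X \a = \nabla_X\b = 0$), the right-hand side becomes $(X(\sigma)+X(\sigma))\g(Y,Z) = 2X(\sigma)\g(Y,Z)$ for $Y,Z\in\{\a,\b\}$. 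On the other hand, condition \eqref{eq:conformal} says exactly that $\g(\a,\b)/\phi^2$, $|\a|^2/\phi^2$, $|\b|^2/\phi^2$ are constants, i.e. $\g(Y,Z) = c_{YZ}\phi^2$ with $c_{YZ}$ constant; differentiating gives $X(\g(Y,Z)) = 2c_{YZ}\phi\, X(\phi) = 2\g(Y,Z)\,X(\log\phi) = 2X(\sigma)\g(Y,Z)$, which matches. Conversely, if $\tilde\nabla$ is metric then reversing this computation forces $X(\g(Y,Z)) = 2X(\sigma)\g(Y,Z)$ on frame pairs, which integrates to \eqref{eq:conformal}; this gives the ``if and only if''.

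Next I would show $\tilde\nabla$ is flat. The curvature of $\tilde\nabla$ is computed from $\tilde\nabla_X Y = \nabla_X Y + X(\sigma)Y$. Since $\nabla$ is flat, a direct expansion of $\tilde R(X,Y)Z = \tilde\nabla_X\tilde\nabla_Y Z - \tilde\nabla_Y\tilde\nabla_X Z - \tilde\nabla_{[X,Y]}Z$ yields terms involving only $\sigma$: the $\nabla$-curvature terms vanish, the mixed terms cancel because $\nabla$ is torsion-free (symmetric) so that $X(Y(\sigma)) - Y(X(\sigma)) = [X,Y](\sigma)$, and the remaining $X(\sigma)Y(\sigma)$-type terms cancel by antisymmetry in $X,Y$. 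Hence $\tilde R \equiv 0$. (Alternatively, and perhaps more cleanly, one observes that in the distinguished coordinates adapted to $\nabla$ one has $\tilde\nabla_X Y = \nabla_X Y + d\sigma(X)Y$, a ``gradient-type'' projective-like change, and its flatness is a standard computation.)

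Finally, the torsion and uniqueness. The torsion of $\tilde\nabla$ is $\tilde T(X,Y) = \tilde\nabla_X Y - \tilde\nabla_Y X - [X,Y] = (\nabla_X Y - \nabla_Y X - [X,Y]) + X(\sigma)Y - Y(\sigma)X = X(\sigma)Y - Y(\sigma)X$, using that $\nabla$ is symmetric. For uniqueness: given a metric $\g$, two metric connections differing by a $(1,2)$-tensor $S$ (i.e. $\tilde\nabla' = \tilde\nabla + S$) are both metric iff $\g(S(X,Y),Z)$ is antisymmetric in $Y,Z$, and they have the same torsion iff $S$ is symmetric in $X,Y$; the standard algebraic lemma (the one underlying the uniqueness of the Levi-Civita connection) shows that a tensor $S$ which is symmetric in its first two slots and whose contraction with $\g$ is antisymmetric in the last two slots must vanish. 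Hence the metric connection with prescribed torsion $T(X,Y) = X(\sigma)Y - Y(\sigma)X$ is unique, and it is $\tilde\nabla$.

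\textbf{Main obstacle.} None of the steps is deep; the only place requiring care is the flatness computation, where one must correctly bookkeep the interaction between the $1$-form $d\sigma$ and the (vanishing) curvature and (vanishing) torsion of $\nabla$ — in particular, invoking symmetry of $\nabla$ at the right moment to turn $X(Y(\sigma))-Y(X(\sigma))$ into $[X,Y](\sigma)$. I would present this either as a short index computation in the adapted coordinates (where $\nabla$ has vanishing Christoffel symbols) or as the coordinate-free expansion sketched above; the coordinate version is essentially one line.
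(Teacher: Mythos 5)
Your argument is correct, and it reaches the same conclusions by a somewhat different route than the paper. For flatness and metricity, the paper makes a single observation: if $X$ is $\nabla$-parallel then $X/\phi$ is $\tilde\nabla$-parallel (a one-line computation using $\sigma=\log\phi$), so $(\a/\phi,\b/\phi)$ is a $\tilde\nabla$-parallel frame; flatness is then immediate, and metricity becomes the statement that the lengths of $\a/\phi$, $\b/\phi$ and the angle between them are constant, which is precisely \eqref{eq:conformal}. You instead verify $\tilde\nabla\g=0$ on the frame $\{\a,\b\}$ via the product rule (legitimate, since $\tilde\nabla\g$ is tensorial and, as the paper notes, conformality need only be checked on frame pairs), and you prove flatness by expanding the curvature directly, which in fact shows the stronger fact $\tilde R = R^\nabla$ for \emph{any} $\nabla$. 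The paper's route is more economical because one observation yields both properties at once; yours makes the unconditional nature of flatness explicit. One small correction of attribution: the identity $X(Y(\sigma))-Y(X(\sigma))=[X,Y](\sigma)$ is just the definition of the Lie bracket acting on functions and does not use symmetry of $\nabla$; symmetry is genuinely needed only in your torsion computation, where you use it correctly. The torsion and uniqueness steps coincide with the paper's, except that you spell out the standard symmetric-in-the-first-two, antisymmetric-in-the-last-two argument that the paper only cites as ``similar to the uniqueness of the Levi-Civita connection.''
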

%%%%%%%%%

%%%%%%%
\begin{proof}
If $X$ is a $\nabla$-parallel vector field, then $X/\phi$ is $\tilde{\nabla}$-parallel,
\[
\tilde{\nabla}_Y \brk{\frac{X}{\phi}} 
= \nabla_Y \brk{\frac{X}{\phi}} +Y(\sigma) \frac{X}{\phi} 
= \frac{1}{\phi} \nabla_Y X - \frac{1}{\phi^2} Y(\phi) X + Y(\sigma) \frac{X}{\phi} 
= 0.
\]
Hence $(\a/\phi,\b/\phi)$ is a $\tilde{\nabla}$-parallel frame field, hence $\tilde{\nabla}$ is flat.
$\tilde{\nabla}$ is metric if and only if the lengths of $\a/\phi$ and $\b/\phi$ and the angle between them are constant, which holds if and only if \eqref{eq:conformal} holds.

Finally, the torsion of $\tilde{\nabla}$ is given by
\[
\begin{split}
T(X,Y) &= \tilde{\nabla}_XY - \tilde{\nabla}_YX - [X,Y] \\
	&= \nabla_XY + X(\sigma)Y - \nabla_YX - Y(\sigma)X - [X,Y] \\
	&=X(\sigma)Y - Y(\sigma)X.
\end{split}
\]
Since for every antisymmetric $(2,1)$-tensor there exists a unique metric connection whose torsion is the given tensor (this is similar to the proof of uniqueness of the Levi-Civita connection), this torsion characterizes $\tilde{\nabla}$ uniquely.
\end{proof}
%%%%%%

%%%%%%%%%%
\begin{proposition}[Existence of a material connection]
\label{pn:material_connection}
Denote by $\tilde{\Pi}_p^q$ the parallel transport operator induced by $\tilde{\nabla}$. 
Then,
$\tilde{\Pi}^*W = W$, i.e., for every $A\in T^*_p\M\otimes \R^2$,
\[
W_p(A) = W_q(A\circ \tilde{\Pi}_q^p),
\]
and similarly for $QW$.
\end{proposition}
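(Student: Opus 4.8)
The plan is to show that $W$ is invariant under $\tilde\Pi$ by checking that each of the four building blocks of $W$ in formula \eqref{eq:W} is separately invariant, and that $\tilde\Pi$ acts compatibly on the relevant data. Recall that $W_p(A)$ is built from the three bond terms $\rho^\fraku\,\Wb(|A(\fraku)|/|\fraku|)$ for $\fraku=\a,\b,\c$ and the volumetric term $\Psi(\det A)$. Since $\Wb$ and $\Psi$ are fixed $\e$-independent functions, it suffices to understand how the four quantities $|A(\fraku)|/|\fraku_p|$ (for $\fraku=\a,\b,\c$), the weights $\rho^\fraku_p$, and $\det A$ transform when $A$ is replaced by $A\circ\tilde\Pi_q^p$ and the base point moves from $p$ to $q$.

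First I would record the key structural fact established in the preceding proposition: if $X$ is $\nabla$-parallel then $X/\phi$ is $\tilde\nabla$-parallel. Applying this to $\a,\b,\c$, we get $\tilde\Pi_q^p(\fraku_q/\phi(q)) = \fraku_p/\phi(p)$, i.e.
\[
\tilde\Pi_q^p \fraku_q = \frac{\phi(p)}{\phi(q)}\,\fraku_p \qquad (\fraku=\a,\b,\c).
\]
Hence $(A\circ\tilde\Pi_q^p)(\fraku_q) = \frac{\phi(p)}{\phi(q)} A(\fraku_p)$, so $|(A\circ\tilde\Pi_q^p)(\fraku_q)| = \frac{\phi(p)}{\phi(q)}|A(\fraku_p)|$. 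On the other hand, by conformality $|\fraku_q| = \frac{\phi(q)}{\phi(p)}|\fraku_p|$, so the ratio is exactly preserved:
\[
\frac{|(A\circ\tilde\Pi_q^p)(\fraku_q)|}{|\fraku_q|} = \frac{|A(\fraku_p)|}{|\fraku_p|}.
\]
Thus each bond argument is invariant. Next, the weights: by Lemma~\ref{lm:limit_relative_weight}, $\rho^\fraku = |\fraku|/(|\a|+|\b|+|\c|)$, which is a ratio of lengths of the parallel fields; by conformality this ratio is constant over $\M$ (all lengths scale by the same factor $\phi(q)/\phi(p)$), so $\rho^\fraku_p = \rho^\fraku_q$. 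Therefore the three bond terms match term-by-term.

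For the volumetric term, since $\tilde\nabla$ is metric and flat, $\tilde\Pi_q^p : T_q\M\to T_p\M$ is a linear isometry, hence (being orientation-preserving, as $\tilde\Pi$ is a continuous family connected to the identity) $\det\tilde\Pi_q^p = 1$ in the sense that it preserves the Riemannian volume form: $\tilde\Pi_q^p{}^*(\star_{\g_p}^0 1) = \star_{\g_q}^0 1$. Consequently $\det(A\circ\tilde\Pi_q^p) = \det A$, where $\det$ is the intrinsic determinant computed with respect to $\g$ on the source as in Section~\ref{sec:discrete_energies}; so $\Psi(\det(A\circ\tilde\Pi_q^p)) = \Psi(\det A)$. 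Combining the bond and volumetric contributions gives $W_q(A\circ\tilde\Pi_q^p) = W_p(A)$, which is the claim. Finally, $QW$ inherits the invariance: because $\tilde\Pi_q^p$ is a linear isomorphism, precomposition with it maps the admissible class $\phi\in C^\infty_0(D_q;\R^2)$ bijectively onto $C^\infty_0(\tilde\Pi_q^p D_q;\R^2)$ (up to a change of the test domain, which is immaterial in the definition \eqref{eq:QCdef} since any convex body may be used), and it intertwines $d\phi\circ\kappa$ with the pulled-back gradient while leaving the integrand $W$ invariant by the first part; hence the infimum defining $QW_p(A)$ equals that defining $QW_q(A\circ\tilde\Pi_q^p)$.

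The routine steps are the length and determinant bookkeeping; the only point requiring a little care — and the main obstacle — is the $QW$ invariance, where one must be careful that the quasiconvexification formula \eqref{eq:QCdef} is genuinely insensitive both to the choice of convex test domain $D$ and to the linear change of variables induced by $\tilde\Pi$, so that the bijection on test functions is legitimate. Since \eqref{eq:QCdef} is already stated in a coordinate-free form valid for an arbitrary volume form $\omega$ and an arbitrary convex body, this is straightforward, but it is where the argument must be phrased precisely rather than by analogy with the Euclidean case.
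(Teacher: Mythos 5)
Your proof is correct and follows essentially the same route as the paper: $\tilde\nabla$-parallelism of the (suitably normalized) crystallographic axes makes each bond ratio invariant, conformality makes the weights $\rho^\fraku$ constant, metricity of $\tilde\nabla$ preserves the determinant, and the invariance passes to $QW$ through formula \eqref{eq:QCdef}. Your extra care with the change of test domain in the quasiconvexification step is fine but not needed beyond what the paper states, since $\tilde\Pi_q^p$ is an isometry and maps the unit disc of $T_q\M$ onto that of $T_p\M$.
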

%%%%%%%%%

%%%%%%%
\begin{proof}
First, by the previous proposition, $\a/|\a|$, $\b/|\b|$ and $\c/|\c|$ are $\tilde{\nabla}$-parallel.
Second, $\rho^\a$, $\rho^\b$ and $\rho^\c$ defined in Lemma~\ref{lm:limit_relative_weight} are constant functions. Third, since the connection is metric, $\det(A\circ\tilde{\Pi}) = \det A$.
Therefore, for any $A\in T^*_p\M\otimes \R^2$,
\[
\begin{split}
W_q(A\circ \tilde{\Pi}_q^p) 
	&= \sum_{\fraku=\a,\b,\c} \rho^\fraku\, \Wb\brk{\frac{|A(\tilde{\Pi}_q^p\fraku(q))|}{|\fraku(q)|}} + \Psi(\det (A\tilde{\Pi}_q^p)) \\
	&=\sum_{\fraku=\a,\b,\c} \rho^\fraku\, \Wb\brk{\frac{|A(\fraku(p))|}{|\fraku(p)|}} + \Psi(\det A) 
	= W_p(A).
\end{split}
\]
By \eqref{eq:QCdef}, this property is inherited by $QW$.
\end{proof}
%%%%%%

%%%%%%%%%%
\begin{proposition}[Discrete right-symmetry]
Let $\g$ be conformal. Suppose that there exists a point $o$ where $|\a_o| = |\b_o|$ and
$\angle(\a_o,\b_o) = 2\pi/3$.
Then $QW$ is right-invariant with respect to $\pi/3$ rotations. 
\label{pn:right_symmetry}
\end{proposition}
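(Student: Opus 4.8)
The plan is to reduce the claim to a pointwise right-invariance of the integrand, $W(A\circ R)=W(A)$ for $R$ a rotation of $(T_p\M,\g_p)$ by $\pi/3$, and then to lift this symmetry to the quasiconvex envelope through the representation formula~\eqref{eq:QCdef}.

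First I would unpack the conformality assumption. By Proposition~\ref{pn:constant_angles} the pairwise angles among $\a,\b,\c$ are independent of the base point, and applying \eqref{eq:conformal} with $\xi=\eta$ equal in turn to $\a$, $\b$ and $\c$ shows that the length ratios $|\a_p|:|\b_p|:|\c_p|$ are constant as well. At the distinguished point $o$ we are given $|\a_o|=|\b_o|$ and $\angle(\a_o,\b_o)=2\pi/3$; combined with $\a+\b+\c=0$ this is exactly the equilateral configuration, so $|\a_o|=|\b_o|=|\c_o|$ and every pairwise angle equals $2\pi/3$. Hence $|\a_p|=|\b_p|=|\c_p|=:\lambda(p)$ and all pairwise angles equal $2\pi/3$ for every $p$, and Lemma~\ref{lm:limit_relative_weight} gives $\rho^\a=\rho^\b=\rho^\c=\tfrac13$, so
\[
W(A)=\tfrac13\sum_{\fraku=\a,\b,\c}\Wb\!\brk{\frac{|A(\fraku)|}{\lambda}}+\Psi(\det A).
\]

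Next I would establish the pointwise symmetry. Fix $p$ and let $R:T_p\M\to T_p\M$ be rotation by $\pi/3$ with respect to $\g_p$. Since the six vectors $\{\pm\a_p,\pm\b_p,\pm\c_p\}$ all have the same $\g_p$-length $\lambda(p)$ and are spaced at $60^\circ$ on the $\g_p$-circle of that radius, $R$ permutes this set; using $|A(-v)|=|A(v)|$ it follows that $\brk{|A(R\a_p)|,|A(R\b_p)|,|A(R\c_p)|}$ is a permutation of $\brk{|A(\a_p)|,|A(\b_p)|,|A(\c_p)|}$. Moreover $\det(A\circ R)=\det A$ because $R$ is an orientation-preserving $\g_p$-isometry. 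Since the displayed formula for $W$ is symmetric in the three summands, this gives $W(A\circ R)=W(A)$ for every $A\in T_p^*\M\otimes\R^2$.

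Finally I would transfer the symmetry to $QW$ via \eqref{eq:QCdef}. I would take the domain $D$ there to be the closed $\g_p$-unit disc --- admissible since $QW$ does not depend on the bounded domain used --- and $\omega$ to be a fixed $R$-invariant volume form. In the infimum defining $QW(A\circ R)$ I substitute $\psi=\phi\circ R^{-1}$; differentiating and using that $R^{-1}$ is linear gives $(d\phi\circ\kappa)_\xi\circ R^{-1}=(d\psi\circ\kappa)_{R\xi}$, whence
\[
(A\circ R)+(d\phi\circ\kappa)_\xi=\Brk{A+(d\psi\circ\kappa)_{R\xi}}\circ R,
\]
and by the pointwise invariance $W\brk{(A\circ R)+(d\phi\circ\kappa)_\xi}=W\brk{A+(d\psi\circ\kappa)_{R\xi}}$. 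Changing variables $\xi\mapsto R\xi$, which preserves both $D$ and $\omega$, and observing that $\phi\mapsto\phi\circ R^{-1}$ is a bijection of $C^\infty_0(D;\R^2)$, the infimum defining $QW(A\circ R)$ equals that defining $QW(A)$; hence $QW(A\circ R)=QW(A)$. As $R$ generates the cyclic group of order six of rotations by multiples of $\pi/3$, $QW$ is right-invariant under all of them. I expect the last paragraph to be the only delicate part --- one needs the domain in the quasiconvexity formula to be chosen $R$-invariant and must track how $\kappa$ intertwines the right action; the first two steps are routine bookkeeping with the hypotheses.
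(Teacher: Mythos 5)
Your proposal is correct and follows essentially the same route as the paper: deduce from conformality and the hypothesis at $o$ that the frame is everywhere equilateral with $\rho^\a=\rho^\b=\rho^\c=\tfrac13$, observe that a $\pi/3$ rotation merely relabels $\pm\a,\pm\b,\pm\c$ (and preserves $\det$), so $W(A\circ R)=W(A)$, and then pass the symmetry to $QW$ through \eqref{eq:QCdef}. The only difference is that you spell out the change of variables $\phi\mapsto\phi\circ R^{-1}$, $\xi\mapsto R\xi$ in the quasiconvexification formula, which the paper leaves implicit; this is a harmless elaboration, not a different argument.
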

%%%%%%%%%

%%%%%%%
\begin{proof}
First, note that 
\[
|\c_o|^2 = |\a_o + \b_o|^2  = |\a_o|^2 + 2|\a_o| |\b_o|\cos(2\pi/3)  + |\b_o|^2 = |\a_o|^2. 
\]
Therefore,
\[
\cos(\angle(\c_o,\a_o)) = \frac{\g_o(\c_o,\a_o)}{|\a_0|^2} =  \frac{-|\a_0|^2 -\g_o(\b_o,\a_o)}{|\a_0|^2} = -\frac{1}{2},
\]
hence also $\angle(\c_o,\a_o) = 2\pi/3$, and similarly for $\angle(\b_o,\c_o)$.
Since the angles between the lattice axes are constant, the angles between $\a$, $\b$ and $\c$ are $2\pi/3$ at all points.

Next, by definition of the conformal factor, for every $p\in\M$,
\[
\frac{|a_p|^2}{\phi(p)} = \frac{|a_o|^2}{\phi(o)} = 
\frac{|b_o|^2}{\phi(o)} =\frac{|b_p|^2}{\phi(p)},
\]
hence $|\a_p| = |\b_p|$ and similarly for $|\c_p|$. 
Therefore, $\rho^\a = \rho^\b = \rho^\c = 1/3$, and
\begin{equation}
\label{eq:W_conformal}
W(A) = \frac13\BRK{\Wb\brk{\frac{|A(\a)|}{\phi}} + \Wb\brk{\frac{|A(\b)|}{\phi}} + 
\Wb\brk{\frac{|A(\c)|}{\phi}}} + \Psi(\det A).
\end{equation}

Since rotations by an angle of $\pi/3$  in $T_p\M$ amount to a relabeling of the vectors $\pm\a$, $\pm\b$ and $\pm\c$, it follows from \eqref{eq:W_conformal} that $W$ is invariant under such rotations. 
By \eqref{eq:QCdef}, this property is inherited by $QW$.
\end{proof}
%%%%%%

%%%%%%%%%%%%%%%%%%%%%%%%%%%%%%%%%%%%%%%%%%%%%
\section{Discussion: extensions to other models}
\label{sec:dicussion}

This paper is concerned with obtaining a model of incompatible elasticity as a $\Gamma$-limit of discrete particle models in two dimensions. 
To this end, an ``incompatible elastic model" is a model that satisfies the properties of Proposition~\ref{pn:properties_limit_functional}.
The discrete lattice models are constructed by using a flat, symmetric connection $\nabla$ to obtain an hexagonal discretization of the manifold.
In this section we discuss several possible variations and extensions of the discrete models and their limits.

\paragraph{Higher dimensions}
All the results in this paper are readily generalizable to higher dimensions (we restricted our analysis to two dimensions for the sake of clarity).
For a $d$-dimensional Riemannian manifold $(\M,\g)$ endowed with a flat, symmetric connection $\nabla$, we can choose $\nabla$-parallel frame fields $v_1,\ldots,v_d$, and define crystallographic axes by
\[
\{a_i\}_{i=1}^{2^d-1} := \BRK{ \sum_{j\in A} a_j ~:~ A\subset \{1,\ldots,d\}, A\ne \emptyset}
\]
We may then construct a triangulation of $\M$ by $d$-simplices whose edges are the crystallographic axes as in Section~\ref{sec:triangulation}, and define discrete bond energies and discrete volumetric energies as in Section~\ref{sec:discrete_energies}.
The only required modification is raising the right-hand sides of \eqref{eq:coercive_Psi}--\eqref{eq:lipschitz_Psi} by a power of $2/d$.
The rest of the analysis remains virtually unchanged, and the limit energy satisfies Proposition~\ref{pn:properties_limit_functional}.

\paragraph{Other lattice structures}
In this paper, we considered discrete models based on hexagonal lattices.
Similar results (i.e., yielding in the limit an incompatible elastic model) can be obtained for other lattices, provided that the discrete energies $\Wtot$ satisfy lower and upper bounds similar to the ones in Proposition~\ref{pn:properties_of_W}.
For example, in a cubic lattice, an incompatible elastic model can be obtained
only if the volumetric energy (or, alternatively, an energy term related to angular deviations) penalizes shear deformations sufficiently; indeed, pairwise bond energy in a cubic lattice is indifferent to shear.

\paragraph{Avoiding interpenetration}
The discrete volumetric energy $\Evol$ considered in this paper penalizes for orientation-reversing \eqref{eq:coercive_Psi}, in a manner ensuring that the coercivity estimates \eqref{eq:coercivW} and \eqref{eq:unif_coerciv} hold, while not changing significantly the upper bounds \eqref{eq:boundedness_Psi}.
Although penalizing orientation-reversing is physically sound, a more physical approach would be to completely rule out interpenetration, for example, by defining $\Evol$ with a volumetric function $\Psi$ that satisfies 
\[
\Psi(a) = \infty \,\,\,\,\,
\forall a\le 0
\Textand \lim_{x\to 0} \Psi(x) = \infty.
\]
Such a function $\Psi$ violates the bound \eqref{eq:boundedness_Psi}; obtaining a $\Gamma$-limit from such discrete models that prevent interpenetration is beyond the scope of this paper.

Another approach for avoiding interpenetration in the limit, which is not as physically sound, but, yet, can be adapted to our case, is the following (see a similar approach in \cite{ACG11}): 
consider a sequence of volumetric functions $\Psi_k$, satisfying
\[
\lim_{k\to \infty} \inf_{a\le 0} \Psi_k(a) = \infty,
\]
and take an iterated $\Gamma$-limit for the energies, first $\e\to 0$ and then $k\to \infty$.

%%%%%%%%%%%%%%%%%%%%%%%%%%%%%%%%%%%%%%%%%%%%%%
\appendix
\section{Technical lemmas}

%%%%%%%%
\begin{lemma}
\label{lem:3indep}
Let $V$ and $W$ be two-dimensional inner-product spaces. Let $A\in \Hom(V,W)$. If there exist two independent vectors $x,y\in V$ such that
\[
|A(x)| = |x|,
\qquad
|A(y)| = |y|
\Textand
|A(x+y)| = |x+y|,
\]
then $A$ is an isometry.
\end{lemma}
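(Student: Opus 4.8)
The plan is to reduce the statement to a computation in coordinates adapted to the inner product on $V$, exploiting the fact that $A$ is determined by its action on the basis $\{x,y\}$ and that we are handed exactly three length constraints — one more than the two needed to pin down the Gram matrix of $\{A(x),A(y)\}$. The key observation is that the bilinear form is recovered from the quadratic form by polarization: from $|A(x+y)|^2 = |A(x)|^2 + 2\langle A(x),A(y)\rangle + |A(y)|^2$ and the corresponding identity in $V$, the three hypotheses force
\[
\langle A(x),A(y)\rangle_W = \langle x,y\rangle_V .
\]
Combined with $|A(x)|=|x|$ and $|A(y)|=|y|$, this says precisely that $A$ carries the Gram matrix of the basis $\{x,y\}$ to itself, i.e. $A^\top A = \mathrm{Id}$ on $V$ when both sides are written in the basis $\{x,y\}$ (using the inner products to identify $V^*\cong V$, $W^*\cong W$).

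From there I would argue that $A$ is an isometry: for an arbitrary $v = \alpha x + \beta y\in V$ (such a decomposition exists since $x,y$ are independent and $\dim V = 2$), expand $|A(v)|^2 = \alpha^2|A(x)|^2 + 2\alpha\beta\langle A(x),A(y)\rangle + \beta^2|A(y)|^2$, substitute the three recovered identities, and recognize the result as $\alpha^2|x|^2 + 2\alpha\beta\langle x,y\rangle + \beta^2|y|^2 = |v|^2$. Hence $A$ preserves all norms, so by polarization again it preserves the inner product, and in particular it is injective; since $\dim V = \dim W = 2$ it is a linear isomorphism, hence an isometry.

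There is essentially no serious obstacle here — the statement is elementary linear algebra and the only thing to be careful about is bookkeeping: keeping the inner products of $V$ and $W$ distinct in the notation, and making sure the polarization identities are applied in the right space. The one mild point worth stating explicitly is that independence of $x,y$ in a two-dimensional space is what makes $\{x,y\}$ a basis, so that the single vector $v$ really does have the form $\alpha x+\beta y$; without this the argument would only control a one-parameter family of directions. In the write-up I would present the polarization step first, then the general-$v$ computation, and close by noting the dimension count that upgrades "norm-preserving linear map" to "isometry."
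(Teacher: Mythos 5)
Your proposal is correct and follows essentially the same route as the paper: polarization recovers $\langle A(x),A(y)\rangle=\langle x,y\rangle$ from the three length conditions, and bilinearity (your expansion of $|A(\alpha x+\beta y)|^2$) then gives that $A$ preserves the inner product, hence is an isometry. The paper's proof is just a compressed version of this same argument.
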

%%%%%%%

%%%%%%%
\begin{proof}
It follows from the polarization identity that $(A(x),A(y)) = (x,y)$, and therefore $A$ preserves the inner-product, hence it is an isometry.
\end{proof}
%%%%%%

%%%%%%%%
\begin{lemma}
\label{lem:tech_lemma_1}
Let $V$ and $W$ be two-dimensional inner-product spaces. Let $x,y\in V$ be independent vectors of equal length. Then, for every $A\in \Hom(V,W)$,
\[
|A|^2 \le \frac{2}{1-\cos\theta}\brk{\frac{|Ax|^2}{|x|^2}  + \frac{|Ay|^2}{|y|^2} + \frac{|A(x+y)|^2}{|x+y|^2}},
\]
where $\theta$ is the angle between $x$ and $y$.
\end{lemma}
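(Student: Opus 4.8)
The plan is to reduce the statement to a finite-dimensional linear-algebra estimate and then establish it either by a direct computation in a convenient basis or by a compactness argument. First I would observe that both sides of the claimed inequality are invariant under precomposition of $A$ with an isometry $W\to W$ and, more importantly, under rescaling: replacing $x$ and $y$ by $x/|x|$ and $y/|y|$ (recall $|x|=|y|$ is assumed) does not change the ratios $|Ax|^2/|x|^2$, etc. Hence it suffices to treat the case where $x,y$ are unit vectors spanning $V$ with $\langle x,y\rangle=\cos\theta$; the third term becomes $|A(x+y)|^2/|x+y|^2 = |Ax+Ay|^2/(2+2\cos\theta)$.

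Next I would express $|A|^2$ in terms of the three quantities on the right. Write $u=Ax$, $v=Ay$ in $W$. Since $\{x,y\}$ is a basis of $V$, the Hilbert--Schmidt norm $|A|^2$ is a positive-definite quadratic form in the Gram data of $u,v$, and in fact $|A|^2$ depends only on $|u|^2$, $|v|^2$ and $\langle u,v\rangle$ (extend $\{x,y\}$ to the dual picture: $A = u\otimes x^* + v\otimes y^*$ where $x^*,y^*$ is the dual basis, so $|A|^2 = |u|^2|x^*|^2 + |v|^2|y^*|^2 + 2\langle u,v\rangle\langle x^*,y^*\rangle$, and $|x^*|^2=|y^*|^2 = 1/\sin^2\theta$, $\langle x^*,y^*\rangle = -\cos\theta/\sin^2\theta$). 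Thus
\[
|A|^2 = \frac{|u|^2+|v|^2 - 2\cos\theta\,\langle u,v\rangle}{\sin^2\theta}.
\]
Now I want to bound this by a multiple of $|u|^2 + |v|^2 + |u+v|^2/(2+2\cos\theta)$. Since $|u+v|^2 = |u|^2+|v|^2+2\langle u,v\rangle \ge 0$ we get $2\langle u,v\rangle \ge -(|u|^2+|v|^2)$, and combining with $-2\cos\theta\langle u,v\rangle \le 2|\cos\theta|\,|\langle u,v\rangle| \le |\cos\theta|(|u|^2+|v|^2)$ one already gets $|A|^2 \le \frac{1+|\cos\theta|}{\sin^2\theta}(|u|^2+|v|^2)$, which is dominated by a constant times the first two terms alone — but I must check the constant matches $\tfrac{2}{1-\cos\theta}$, and here the sign of $\cos\theta$ matters: the bound is designed so that when $\cos\theta$ is close to $1$ (nearly parallel vectors) the first two bond terms degenerate and the $|A(x+y)|^2$ term carries the weight. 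So the careful step is: write $\langle u,v\rangle = \tfrac12(|u+v|^2 - |u|^2 - |v|^2)$, substitute into the formula for $|A|^2$, and collect terms to get
\[
|A|^2 = \frac{(1-\cos\theta)(|u|^2+|v|^2) + \cos\theta\,|u+v|^2}{\sin^2\theta}.
\]
Using $\sin^2\theta = (1-\cos\theta)(1+\cos\theta)$ and $|u+v|^2 = (2+2\cos\theta)\cdot\frac{|A(x+y)|^2}{|x+y|^2}$, this becomes $\frac{|u|^2+|v|^2}{1+\cos\theta} + \frac{2\cos\theta}{1-\cos\theta}\cdot\frac{|A(x+y)|^2}{|x+y|^2}$. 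Since $1+\cos\theta \ge 1-\cos\theta$ for $\theta\in(0,\pi/2]$ and $2\cos\theta \le 2$, each coefficient is at most $\tfrac{2}{1-\cos\theta}$ when $\cos\theta\ge0$; for $\cos\theta<0$ the $|u+v|^2$ coefficient is negative and may be dropped, while $\frac{1}{1+\cos\theta}\le \frac{2}{1-\cos\theta}$ fails in general — so I would instead, in that regime, bound $\frac{|u|^2+|v|^2}{1+\cos\theta}$ back through $|u|^2+|v|^2 \le \tfrac{2}{1-\cos\theta}(\dots)$ trivially since $\tfrac{2}{1-\cos\theta}\ge \tfrac{1}{1+\cos\theta}$ is equivalent to $2(1+\cos\theta)\ge 1-\cos\theta$, i.e. $3\cos\theta\ge -1$, which need not hold either.

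The main obstacle, therefore, is the bookkeeping of constants across the whole range $\theta\in(0,\pi)$, in particular handling obtuse $\theta$. The cleanest route around it is to not optimize the constant by hand: once $|A|^2 = \frac{(1-\cos\theta)(|u|^2+|v|^2) + \cos\theta\,|u+v|^2}{\sin^2\theta}$ is established, note the right-hand side of the desired inequality equals $\frac{2}{1-\cos\theta}\big(|u|^2+|v|^2 + \frac{|u+v|^2}{2+2\cos\theta}\big)$, and I would simply verify the pointwise inequality of quadratic forms in $(|u|^2,|v|^2,\langle u,v\rangle)$ by checking it on the extreme rays (e.g. $v=0$; $v=u$; $v=-u$) and invoking convexity/positive semidefiniteness of the difference — or, even more robustly, argue by compactness: fix $\theta$, the ratio of the two sides is continuous and scale-invariant, hence attains a maximum over the sphere $|u|^2+|v|^2=1$, and one checks this maximum is $\le 1$; the constant $\tfrac{2}{1-\cos\theta}$ then appears as the worst case over $\theta$. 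I expect the whole argument to be short once the identity for $|A|^2$ is in hand; the identity itself is the one genuinely content-bearing computation.
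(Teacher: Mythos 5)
Your starting identity $|A|^2=\frac{|u|^2+|v|^2-2\cos\theta\,\langle u,v\rangle}{\sin^2\theta}$ (for unit $x,y$, $u=Ax$, $v=Ay$) is correct and is exactly the paper's starting point, but the ``collected'' identity that you yourself call the one content-bearing computation has a sign error: substituting $2\langle u,v\rangle=|u+v|^2-|u|^2-|v|^2$ gives the numerator $(1+\cos\theta)\bigl(|u|^2+|v|^2\bigr)-\cos\theta\,|u+v|^2$, not $(1-\cos\theta)\bigl(|u|^2+|v|^2\bigr)+\cos\theta\,|u+v|^2$ (test $A=\operatorname{Id}$: your formula gives $2(1+\cos^2\theta)/\sin^2\theta\neq 2$). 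The apparent difficulty at obtuse angles that derails the rest of your argument is an artifact of this error. With the correct identity the lemma is immediate for every $\theta\in(0,\pi)$: using $\sin^2\theta=(1-\cos\theta)(1+\cos\theta)$ and $|x+y|^2=2(1+\cos\theta)$,
\[
|A|^2=\frac{1}{1-\cos\theta}\bigl(|Ax|^2+|Ay|^2\bigr)-\frac{2\cos\theta}{1-\cos\theta}\cdot\frac{|A(x+y)|^2}{|x+y|^2},
\]
so for $\cos\theta\ge 0$ one simply drops the nonpositive second term, and for $\cos\theta<0$ both coefficients are below $2/(1-\cos\theta)$ because $-\cos\theta<1$. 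This corrected route is essentially the paper's proof, which performs the same manipulation and discards the nonnegative quantity $|Ax|^2+|Ay|^2+(Ax,Ay)$.

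The fallback arguments you offer in place of the computation do not close the gap as stated. The difference of the two sides is a linear functional of the Gram data $\bigl(|u|^2,|v|^2,\langle u,v\rangle\bigr)$; verifying it on the three rays $v=0$, $v=u$, $v=-u$ is not enough, since nonnegativity on the cone of admissible Gram matrices requires all its extreme rays, i.e.\ the full one-parameter family $v=\lambda u$ (equivalently, nonnegativity of a quadratic in $\lambda$), and ``invoking positive semidefiniteness of the difference'' is precisely what has to be proved. Similarly, the compactness argument only guarantees that the scale-invariant ratio attains a maximum; the step ``one checks this maximum is $\le 1$'' is the lemma itself. So the proposal, as written, contains both a computational error and an unclosed final step, even though the overall strategy is sound and, once the sign is fixed, coincides with the paper's argument.
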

%%%%%%%

%%%%%%%
\begin{proof}
It suffices to prove the lemma for unit vectors.
Denote $c = \cos\theta = (x,y)$.
The vectors
$x$  and $(y - c\, x)/\sqrt{1-c^2}$
are orthonormal, hence
\[
|A|^2 = |Ax|^2 + \frac{|A(y - c\, x)|^2}{1-c^2} = \frac{|Ax|^2 + |Ay|^2 - 2c(Ax,Ay)}{1-c^2}.
\]
Now,
\[
\begin{split}
|A|^2 &= \frac{2}{1-c} \cdot \frac{1}{2(1+c)} \brk{|Ax|^2 + |Ay|^2 - 2c(Ax,Ay)} \\
%&= \frac{2}{1-c}  \brk{|Ax|^2 + |Ay|^2 - \frac{(1+2c)(|Ax|^2 + |Ay|^2) + 2c(Ax,Ay)}{|x+y|^2}} \\
&= \frac{2}{1-c}  \brk{|Ax|^2 + |Ay|^2 + \frac{|A(x+y)|^2 - 2(1+c)(|Ax|^2 + |Ay|^2 + (Ax,Ay))}{|x+y|^2}} \\
&\le \frac{2}{1-c}  \brk{|Ax|^2 + |Ay|^2 + \frac{|A(x+y)|^2}{|x+y|^2}},
\end{split}
\]
where in the last step we used the fact that
$|Ax|^2 + |Ay|^2 + (Ax,Ay) > 0$.
\end{proof}
%%%%%%

%%%%%%%%%% 
\begin{lemma}
\label{lem:tech_lemma_2}
Let $V$ and $W$ be two-dimensional inner-product spaces. Let $x,y\in V$ be two independent vectors. Then, there exists a constant $C$ depending continuously on the angle $\theta$ between $x$ and $y$ and the ratio of their lengths $r = |y|/|x|$, such that for every $A\in \Hom(V,W)$,
\[
|A|^2 \le C\brk{\frac{|Ax|^2}{|x|^2}  + \frac{|Ay|^2}{|y|^2} + \frac{|A(x+y)|^2}{|x+y|^2}}.
\]
\end{lemma}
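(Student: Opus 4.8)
The plan is to reduce the case of two general independent vectors to the equal-length case already handled in Lemma~\ref{lem:tech_lemma_1}. Write $r = |y|/|x|$ and $\theta$ for the angle between $x$ and $y$. Without loss of generality we may assume $|x| = 1$, so $|y| = r$. The natural move is to rescale: set $y' = y/r$, so that $|x| = |y'| = 1$ and $x,y'$ are independent unit vectors spanning the same plane. Applying Lemma~\ref{lem:tech_lemma_1} to $x$ and $y'$ gives
\[
|A|^2 \le \frac{2}{1-\cos\theta}\brk{|Ax|^2 + |Ay'|^2 + \frac{|A(x+y')|^2}{|x+y'|^2}},
\]
since $\theta$ is also the angle between $x$ and $y'$. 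Now I would translate each term on the right back into the quantities appearing in the statement: $|Ay'|^2 = |Ay|^2/r^2 = |Ay|^2/|y|^2$, and for the last term write $x + y = x + r y'$, so that $A(x+y) = Ax + r\,Ay'$ and one must compare $|A(x+y')|^2/|x+y'|^2$ with $|A(x+y)|^2/|x+y|^2$.

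The key step is therefore a comparison between the two sum-vectors $x + y'$ and $x + y = x + r y'$, both lying in the span of $x$ and $y'$. Since $x + y'$ and $x + r y'$ are two nonzero vectors in a two-dimensional space, and $A$ restricted to that plane is a fixed linear map, one can bound $|A(x+y')|$ in terms of $|Ax|$ and $|Ay'|$ by expressing $x+y'$ as a linear combination: indeed $|A(x+y')| \le |Ax| + |Ay'|$, hence
\[
\frac{|A(x+y')|^2}{|x+y'|^2} \le \frac{2\brk{|Ax|^2 + |Ay'|^2}}{|x+y'|^2} = \frac{2}{|x+y'|^2}\brk{|Ax|^2 + \frac{|Ay|^2}{|y|^2}}.
\]
Since $|x+y'|^2 = 1 + 2\cos\theta + 1 = 2(1+\cos\theta)$ depends only on $\theta$ and is bounded away from zero for $\theta$ in any compact subinterval of $(0,\pi)$ — and in fact the claimed constant is allowed to depend continuously on $\theta$ and $r$ — this term is absorbed into a constant times $\brk{|Ax|^2/|x|^2 + |Ay|^2/|y|^2}$. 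Combining the displays and collecting the $\theta,r$-dependent factors produces a constant $C = C(\theta, r)$ with the required form, and continuity in $(\theta,r)$ is manifest from the explicit expressions ($1/(1-\cos\theta)$ and $1/(1+\cos\theta)$ are continuous on $(0,\pi)$, and the $r$-dependence enters only through the harmless factor $1/r^2$ multiplying $|Ay|^2$, which is already present in the statement).

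The main obstacle, such as it is, is purely bookkeeping: one must be careful that the estimate from Lemma~\ref{lem:tech_lemma_1} is applied to the \emph{rescaled} pair $x, y'$ (for which the hypotheses hold), and then that every occurrence of $y'$, $x+y'$ is consistently converted back to $y$, $x+y$ using $y = r y'$, keeping track of the powers of $r$ and of $|x+y'|$ versus $|x+y|$. A slightly cleaner alternative, avoiding the sum-vector comparison entirely, is to observe that $\{x, y\}$ is a basis of the plane, compute $|A|^2$ via a Gram-matrix / orthonormalization argument directly (as in the proof of Lemma~\ref{lem:tech_lemma_1} but without assuming equal lengths), and then bound the resulting expression — this also yields an explicit $C(\theta,r)$, at the cost of a marginally longer computation. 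Either route is routine; I would present the first.
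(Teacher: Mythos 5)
Your proof is correct, but it takes a genuinely different route from the paper. The paper keeps the three-term structure intact: it splits $x+y$ into two \emph{equal-length} pieces $v=x+\alpha y$, $w=(1-\alpha)y$ (with $\alpha$ chosen so that $|v|=|w|$ and $v+w=x+y$), applies Lemma~\ref{lem:tech_lemma_1} to the pair $(v,w)$ so that the third term appears directly as $|A(x+y)|^2/|x+y|^2$, and then re-expands $|Av|^2$, $|Aw|^2$ in terms of $|Ax|^2$, $|Ay|^2$. You instead rescale $y$ to unit length, apply Lemma~\ref{lem:tech_lemma_1} to $(x,y/r)$, and dispose of the mismatched sum-vector term via the triangle inequality $|A(x+y/r)|\le |Ax|+|A(y/r)|$; in effect you prove the stronger statement
\[
|A|^2 \le \frac{2(2+\cos\theta)}{\sin^2\theta}\left(\frac{|Ax|^2}{|x|^2}+\frac{|Ay|^2}{|y|^2}\right),
\]
from which the lemma follows because the omitted term $|A(x+y)|^2/|x+y|^2$ is non-negative, with a constant depending only on $\theta$ (hence trivially continuous in $(\theta,r)$). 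This is legitimate and, as you note, could be shortened further by a direct orthonormalization argument, showing that for \emph{this} lemma the third term is actually superfluous (the price being a constant that also blows up as $\theta\to\pi$, which the statement permits since $x,y$ are independent). What the paper's more careful splitting buys is that it never discards the $x+y$ data; this matters not here but in the downstream Lemma~\ref{lem:tech_lemma_new}, where the analogous two-vector estimate would fail (the conditions $x\cdot Bx=y\cdot By=0$ alone do not force $B=0$), so be aware that your simplification is specific to the present lemma. One small presentational point: you first announce that one must ``compare $|A(x+y')|^2/|x+y'|^2$ with $|A(x+y)|^2/|x+y|^2$,'' but your actual argument never performs that comparison --- it simply drops the $x+y$ term; stating explicitly that the two-term bound implies the three-term one would make the logic cleaner.
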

%%%%%%%%%% 

%%%%%%%%%% 
\begin{proof}
Without loss of generality we can assume that $|y| > |x|$ 
(otherwise Lemma~\ref{lem:tech_lemma_1} applies).
Set 
\[
v=x+\alpha y
\Textand
w=(1-\alpha) y,
\]
where
\[
\alpha = \frac{r^2-1}{2r(r +\cos \theta)}
\]
is chosen such that $|v| = |w|$. 
Also, $v+w = x+y$. 
Note that $\alpha \in \brk{(r-1)/2r, (r+1)/2r}\subset (0,1)$, and in particular, $v$ and $w$ are independent.
The angle between $v$ and $w$ depends only on $\alpha$ and $\theta$, and therefore on $r$ and $\theta$.
By the previous lemma, there exists a $C = C(r,\theta)$ such that
\[
\begin{split}
|A|^2 &\le C\brk{\frac{|Av|^2}{|v|^2}  + \frac{|Aw|^2}{|w|^2} + \frac{|A(v+w)|^2}{|v+w|^2}} \\
	%& = C\brk{\frac{|Av|^2}{|v|^2}  + \frac{|Ay|^2}{|y|^2} + \frac{|A(x+y)|^2}{|x+y|^2}} \\
	& = C\brk{\frac{|Ax|^2 + \alpha^2|Ay|^2 + 2\alpha(Ax,Ay)}{(1-\alpha)^2|y|^2}  + \frac{|Ay|^2}{|y|^2} + \frac{|A(x+y)|^2}{|x+y|^2}} \\
	& \le C\brk{\frac{|Ax|^2 + \alpha^2|Ay|^2 + \alpha(|Ax|^2 + |Ay|^2)}{(1-\alpha)^2|y|^2}  + \frac{|Ay|^2}{|y|^2} + \frac{|A(x+y)|^2}{|x+y|^2}} \\
	& = C\brk{\frac{1+\alpha}{(1-\alpha)^2r^2}\frac{|Ax|^2}{|x|^2}  + \brk{1+\frac{\alpha^2 +\alpha}{(1-\alpha)^2}}\frac{|Ay|^2}{|y|^2} + \frac{|A(x+y)|^2}{|x+y|^2}} \\
	& \le C'\brk{\frac{|Ax|^2}{|x|^2}  + \frac{|Ay|^2}{|y|^2} + \frac{|A(x+y)|^2}{|x+y|^2}},
\end{split}
\]
where in the passage to the third line we used the inequality $2ab\le a^2 + b^2$.
\end{proof}
%%%%%%%%%% 

%%%%%%%%%%
\begin{lemma}
\label{lem:tech_lemma_new}
Let $V$ and $W$ be two-dimensional inner-product spaces, and let $x,y\in V$ be two independent vectors.
Then there exists a constant $C>0$ depending continuously on the angle $\theta$ between $x$ and $y$ and the ratio of their lengths $r = |y|/|x|$, such that for every $A\in \Hom(V,W)$,
\begin{equation}
\label{ineq}
\dist^2(A,\O{2}) \le C\Brk{\brk{\frac{|Ax|}{|x|}-1}^2  + \brk{\frac{|Ay|}{|y|}-1}^2 + \brk{\frac{|A(x+y)|}{|x+y|}-1}^2}
\end{equation}
\end{lemma}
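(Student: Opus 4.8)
The plan is to reduce \eqref{ineq} to a quantitative linear‑algebra fact about the symmetric operator $B := A^{T}A - \id$ on $V$, where $A^{T}\colon W\to V$ is the adjoint with respect to the two inner products. Two preliminary observations drive everything. First, if $\sigma_{1}\ge\sigma_{2}\ge 0$ are the singular values of $A$, then $\dist^{2}(A,\O{2}) = (\sigma_{1}-1)^{2}+(\sigma_{2}-1)^{2}$, while the eigenvalues of $B$ are $\sigma_{i}^{2}-1$; since $\sigma_{i}\ge 0$ we have $(\sigma_{i}-1)^{2}\le(\sigma_{i}^{2}-1)^{2}$, hence $\dist^{2}(A,\O{2}) \le |B|^{2}$ with $|\cdot|$ the Frobenius norm. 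Second, for any unit vector $\hat e\in V$ one has $\langle B\hat e,\hat e\rangle = |A\hat e|^{2}-1$; applying this to the three unit vectors in the directions of $x$, $y$, $x+y$ and setting
\[
\phi_{1}=\tfrac{|Ax|}{|x|}-1,\qquad \phi_{2}=\tfrac{|Ay|}{|y|}-1,\qquad \phi_{3}=\tfrac{|A(x+y)|}{|x+y|}-1,
\]
so that the right‑hand side of \eqref{ineq} is $\sum_{i}\phi_{i}^{2}$, we get $\langle B\hat e_{i},\hat e_{i}\rangle = \phi_{i}(\phi_{i}+2)$ and therefore $|\langle B\hat e_{i},\hat e_{i}\rangle|\le |\phi_{i}|\,(1+\sigma_{1})$, using $\phi_{i}+2 = 1+\tfrac{|Ae_{i}|}{|e_{i}|}\le 1+\sigma_{1}$.

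The core step is the observation that the linear map taking a symmetric operator $B$ on $V$ to the triple $\bigl(\langle B\hat e_{1},\hat e_{1}\rangle,\langle B\hat e_{2},\hat e_{2}\rangle,\langle B\hat e_{3},\hat e_{3}\rangle\bigr)\in\R^{3}$ is an isomorphism: both spaces are $3$‑dimensional, and injectivity holds because a nonzero quadratic form on a plane vanishes on at most two lines, whereas $x$, $y$, $x+y$ determine three distinct lines (distinct since $x$ and $y$ are independent). Hence its inverse has finite operator norm $C_{2}$, which (by rotation invariance of the Frobenius norm) depends only on the configuration of the three lines, i.e.\ continuously on $\theta$ and $r$, and
\[
|B| \le C_{2}\sum_{i=1}^{3}|\langle B\hat e_{i},\hat e_{i}\rangle| \le C_{2}(1+\sigma_{1})\sum_{i=1}^{3}|\phi_{i}| \le \sqrt3\,C_{2}\,(1+\sigma_{1})\,t,
\]
where $t:=\bigl(\sum_{i}\phi_{i}^{2}\bigr)^{1/2}$.

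To remove the factor $1+\sigma_{1}$ I would split on the size of $\sigma_{1}$. If $\sigma_{1}<2$ then $1+\sigma_{1}<3$, so $|B|<3\sqrt3\,C_{2}\,t$ and $\dist^{2}(A,\O{2})\le |B|^{2}<27\,C_{2}^{2}t^{2}$. If $\sigma_{1}\ge 2$ then $|B|\ge \sigma_{1}^{2}-1\ge\tfrac13(1+\sigma_{1})^{2}$, which together with the previous display yields $1+\sigma_{1}\le 3\sqrt3\,C_{2}\,t$; consequently $(\sigma_{1}-1)^{2}\le (1+\sigma_{1})^{2}\le 27\,C_{2}^{2}t^{2}$ and likewise $(\sigma_{2}-1)^{2}\le\max\bigl(1,(1+\sigma_{1})^{2}\bigr)\le 27\,C_{2}^{2}t^{2}$, the bound $1\le 27\,C_{2}^{2}t^{2}$ holding because $\sigma_{1}\ge 2$ forces $3\le 1+\sigma_{1}\le 3\sqrt3\,C_{2}\,t$. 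Either way $\dist^{2}(A,\O{2})\le C\sum_{i}\phi_{i}^{2}$ with $C=54\,C_{2}^{2}$, a continuous function of $\theta$ and $r$.

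The genuine difficulty is conceptual rather than computational: $\dist^{2}(\cdot,\O{2})$ is neither homogeneous nor bounded, and it vanishes on exactly the zero set of the right‑hand side of \eqref{ineq} (by Lemma~\ref{lem:3indep}), so no bare compactness argument can supply the constant. What does the work is the representation through $B$ together with the dichotomy on $\sigma_{1}$, which simultaneously handles the linearized regime near $\O{2}$ and the growth for large $A$. The one thing to keep an eye on is that $C_{2}$, and hence $C$, blows up precisely as two of the lines $x$, $y$, $x+y$ collide — that is, as $\theta\to 0,\pi$ or $r\to 0,\infty$ — which is consistent with the claimed continuous, not uniform, dependence.
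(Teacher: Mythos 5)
Your proof is correct, and it takes a genuinely different route from the paper's. The paper argues in three stages: it first disposes of large $|A|$ using the bound $|A|^2\lesssim\sum_i|A\fraku_i|^2/|\fraku_i|^2$ (its Lemma~\ref{lem:tech_lemma_2}), then reduces via polar decomposition to symmetric $P$ near $I$ by a compactness argument on an annulus, and finally linearizes, writing $P=I+\e B$ and Taylor-expanding, so that everything rests on the non-degeneracy fact that a symmetric $B$ with $\fraku_i\cdot B\fraku_i=0$ for $i=1,2,3$ must vanish. You use exactly the same non-degeneracy fact (phrased as injectivity, hence invertibility, of $B\mapsto(\langle B\hat e_i,\hat e_i\rangle)_{i=1}^3$ on the three-dimensional space of symmetric operators, via the observation that a nonzero binary quadratic form vanishes on at most two lines), but you deploy it globally through $B=A^TA-\id$, the elementary bound $\dist^2(A,\O2)\le|B|^2$, and a dichotomy on $\sigma_1$ to absorb the factor $1+\sigma_1$; the estimates in both branches check out, including $\sigma_1^2-1\ge\tfrac13(1+\sigma_1)^2$ for $\sigma_1\ge2$ and the absorption of the $(\sigma_2-1)^2\le 1$ case. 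What your version buys is an explicit constant ($C=54C_2^2$ with $C_2$ the norm of the inverse of an explicit linear map), no appeal to compactness or to the auxiliary Lemmas~\ref{lem:tech_lemma_1}--\ref{lem:tech_lemma_2}, and a more transparent continuous dependence of $C$ on $(\theta,r)$; the paper's linearization-plus-compactness argument is softer and requires fewer explicit computations, at the cost of a non-constructive constant whose $(\theta,r)$-dependence is justified only by the scaling/rotation invariance remark at the end. Your closing caveat about $C_2$ degenerating as the three lines collide is accurate and consistent with the statement.
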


\begin{proof}
For ease of notation, we will write $\fraku_1 = x$, $\fraku_2 = y$ and $\fraku_3 = x+y$.
We will also use the notation $f(A)\lesssim g(A)$, meaning that there exists a constant $C>0$, such that $f(A)\le C\, g(A)$ for all $A\in\Hom(V,W)$, or for a subset of $\Hom(V,W)$ as specified.
 
First, we show that \eqref{ineq} holds for every $|A|$ large enough: for $|A|>\sqrt{2}$, 
\[
\dist^2(A,\O{V,W}) \le \left(|A| + \sqrt{2}\right)^2 < 4|A|^2.
\]
By Lemma~\ref{lem:tech_lemma_2}, 
\[
|A|^2 \lesssim \sum_{i=1}^3 \frac{|A\fraku_i|^2}{|\fraku_i|^2} \le 3 \max_i \frac{|A\fraku_i|^2}{|\fraku_i|^2}.
\]
It follows that for $|A|$ large enough, we also have
\[
\max_i \brk{\frac{|A\fraku_i|}{|\fraku_i|} -1 }^2 \ge \frac{1}{2} \max_i \frac{|A\fraku_i|^2}{|\fraku_i|^2}.
\]
Using all of the above, we obtain
\[
\dist^2(A,\O{V,W}) \lesssim \max_i \frac{|A\fraku_i|^2}{|\fraku_i|^2} \lesssim \max_i \brk{\frac{|A\fraku_i|}{|\fraku_i|} -1 }^2 \le \sum_i \brk{\frac{|A\fraku_i|}{|\fraku_i|} -1 }^2,
\]
which proves \eqref{ineq} for large enough $|A|$.

Next, we note that it suffices to prove that
\begin{equation}
\label{ineq2}
|P-I|^2 \lesssim \sum_i \brk{\frac{|P\fraku_i|}{|\fraku_i|} -1 }^2
\end{equation}
for any symmetric, semi-positive definite $P$, in the vicinity of $I$.
Indeed, if $A=UP$ is the polar decomposition of $A$, then $\dist(A,\O{V,W}) = |P-I|$, and the right-hand side of \eqref{ineq} is left-$\O{W}$-invariance as a function of $A$.
Since we have already showed that \eqref{ineq2} holds for large enough $|P|$, we need to prove it in a compact ball, which means it is enough to prove it in the vicinity of the zero-set of the right-hand side of \eqref{ineq2}, which the identity matrix (Lemma~\ref{lem:3indep}).
Indeed, suppose that \eqref{ineq2} holds in $B_{r}(I)$ for some $r>0$ with constant $C_1$. Consider the continuous function $|P-I|^{-2}\sum_i \brk{\frac{|P\fraku_i|}{|\fraku_i|} -1 }^2$ in the compact set $\overline{B_{R}(I)}\setminus B_{r}(I)$. This function attains a non-zero minimum $C_2$, hence  \eqref{ineq2} holds in $B_{R}(I)$ with constant $\max(C_1,C_2^{-1})$.

Writing $P = I + \e B$, where $B$ is a symmetric matrix with $|B|=1$, we therefore need to show that for small $\e>0$,
\[
\e^2 \lesssim \sum_i \brk{\frac{|\fraku_i+ \e B\fraku_i|}{|\fraku_i|} -1 }^2.
\]
Taylor expanding, 
\[
\begin{split}
\sum_i \brk{\frac{|\fraku_i+ \e B\fraku_i|}{|\fraku_i|} -1 }^2 
	&= \sum_i \brk{\sqrt{1 + 2\e \frac{\fraku_i}{|\fraku_i|}\cdot B \frac{\fraku_i}{|\fraku_i|} + \e^2 \frac{|B\fraku_i|}{|\fraku_i|}} -1 }^2 \\
	&= \e^2\sum_i \brk{\frac{\fraku_i}{|\fraku_i|}\cdot B \frac{\fraku_i}{|\fraku_i|}}^2 + O(\e^3).
\end{split}
\]
In order to complete the proof of \eqref{ineq2}, we need to show that $\sum_i \brk{\fraku_i\cdot B \fraku_i}^2$ cannot vanish for $|B|=1$.
Assuming otherwise---that $\fraku_i\cdot B \fraku_i=0$ for $i=1,2,3$,
\[
0=(x+y)\cdot B(x+y) = x\cdot Bx + 2y\cdot Bx + y \cdot By = 2y\cdot Bx,
\]
hence $Bx$ is perpendicular to $x$ and $y$, i.e., $Bx=0$. Similarly $By=0$, hence $B=0$, in contradiction.

Finally, note that the constant $C$ in \eqref{ineq} depends only on $r$ and $\theta$ since the claim is invariant under (simultaneous) rotation and rescaling of $x$ and $y$.
\end{proof}
%%%%%%%%%%%%%

%%%%%%%%%% 
\begin{lemma}
\label{lem:tech_lemma_3}
Let $V$ and $W$ be $d$-dimensional inner-product spaces. Then, for every $A\in \Hom(V,W)$,
\begin{equation}
\label{eq:ineq_SO_O_det}
\dist^2(A,\SO{V,W}) \le \dist^2(A,\O{V,W}) + 4|\det A|^{1/d} \ind_{\BRK{\det A<0}}.
\end{equation}
\end{lemma}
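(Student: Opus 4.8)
The plan is to reduce to matrices via orthonormal bases and push everything through the singular value decomposition. First I would fix positively oriented orthonormal bases of $V$ and $W$; this identifies $\Hom(V,W)$ with the $d\times d$ matrices, identifies $\O{V,W}$ and $\SO{V,W}$ with $\O{d}$ and $\SO{d}$, turns $|\cdot|$ into the Frobenius norm, and leaves $\mathrm{sign}(\det A)$ unchanged. Writing $A=U\Sigma V^{\top}$ with $U,V\in\O{d}$ and $\Sigma=\mathrm{diag}(\sigma_1,\dots,\sigma_d)$, $\sigma_1\ge\cdots\ge\sigma_d\ge0$, I would record the elementary facts that $|\Sigma-Q|^2=|\Sigma|^2-2\sum_i\sigma_iQ_{ii}+d$ for $Q\in\O{d}$, that $|Q_{ii}|\le1$, and hence $\dist^2(A,\O{d})=\sum_i(\sigma_i-1)^2$, attained at $Q=I$. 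Next I would note that $Q\mapsto W:=U^{\top}QV$ is a bijection from $\SO{d}$ onto $\{W\in\O{d}:\det W=\varepsilon\}$, where $\varepsilon:=\det U\det V$, preserving $|A-Q|=|\Sigma-W|$, so that $\dist^2(A,\SO{d})=\min\{|\Sigma-W|^2:W\in\O{d},\ \det W=\varepsilon\}$; together with $\det A=\varepsilon\prod_i\sigma_i$ this pins down $\varepsilon$.

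With this setup I would split on the sign of $\det A$. If $\det A\ge0$ the indicator in \eqref{eq:ineq_SO_O_det} vanishes, and since $\SO{d}\subset\O{d}$ already gives $\dist(A,\SO{d})\ge\dist(A,\O{d})$, it suffices to produce an admissible $W$ realizing $\sum_i(\sigma_i-1)^2$: if $\varepsilon=1$ take $W=I$; if $\varepsilon=-1$ (which forces $\det A=0$, hence $\sigma_d=0$) take $W=\mathrm{diag}(1,\dots,1,-1)$ and use $(\sigma_d+1)^2=1=(\sigma_d-1)^2$. If $\det A<0$ then $\varepsilon=-1$ and all $\sigma_i>0$; choosing again $W=\mathrm{diag}(1,\dots,1,-1)$ gives
\[
\dist^2(A,\SO{d})\le\sum_{i<d}(\sigma_i-1)^2+(\sigma_d+1)^2=\dist^2(A,\O{d})+4\sigma_d,
\]
and I would close the estimate with $\sigma_d=\min_i\sigma_i\le\bigl(\prod_i\sigma_i\bigr)^{1/d}=|\det A|^{1/d}$ by the arithmetic--geometric mean inequality.

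I do not expect a serious obstacle: the geometric content is simply that passing from the nearest (possibly orientation-reversing) isometry to the nearest orientation-preserving one amounts to flipping the orientation in the smallest singular direction, at a cost of exactly $4\sigma_{\min}$, which is dominated by $4|\det A|^{1/d}$. The only point requiring genuine care is the bookkeeping of determinant signs in the SVD — in particular the degenerate case $\det A=0$, where the rotational factor is not unique and $\varepsilon$ is not determined by $\mathrm{sign}(\det A)$ alone — so I would make sure the case analysis above is exhaustive there.
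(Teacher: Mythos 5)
Your proposal is correct and follows essentially the same route as the paper: reduce to singular values, note that for $\det A\ge 0$ the distances to $\O{V,W}$ and $\SO{V,W}$ coincide, and for $\det A<0$ flip the smallest singular direction at cost $4\sigma_d\le 4|\det A|^{1/d}$. Your treatment is somewhat more careful than the paper's (explicit construction of the admissible orthogonal matrix and the degenerate case $\det A=0$), but the underlying argument is identical.
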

%%%%%%%%%% 

%%%%%%%%%% 
\begin{proof}
Let $\sigma_1\ge \sigma_2\ge\ldots\ge \sigma_d\ge 0$ be the singular values of $A$.
Then
\[
\dist^2(A,\O{V,W}) = \sum_{i=1}^{d} (\sigma_i - 1)^2 \textand |\det A| = \prod_{i=1}^{d} \sigma_i
\]
If $\det A\ge 0$, then
\[
\dist^2(A,\SO{V,W}) = \sum_{i=1}^{d} (\sigma_i - 1)^2 = \dist^2(A,\O{V,W}),
\]
which shows the equality in \eqref{eq:ineq_SO_O_det} in this case.
If $\det A<0$, then
\[
\begin{split}
\dist^2(A,\SO{V,W}) &= \sum_{i=1}^{d-1} (\sigma_i - 1)^2 + (\sigma_d + 1)^2 \\
&= \dist^2(A,\O{V,W}) + 4\sigma_d 
\le \dist^2(A,\O{V,W}) + 4|\det A|^{1/d}.
\end{split}
\]
\end{proof}
%%%%%%%%%% 

%%%%%%%%%%%%%%%%%%%%%%%%%%%%%%%%%%%%%%%%%%%%%%
\bibliographystyle{amsalpha}
\providecommand{\bysame}{\leavevmode\hbox to3em{\hrulefill}\thinspace}
\providecommand{\MR}{\relax\ifhmode\unskip\space\fi MR }
% \MRhref is called by the amsart/book/proc definition of \MR.
\providecommand{\MRhref}[2]{%
  \href{http://www.ams.org/mathscinet-getitem?mr=#1}{#2}
}
\providecommand{\href}[2]{#2}
\footnotesize{}

\end{document}